\newcommand{\+}{\protect\nobreakdash-}
\renewcommand{\:}{\colon}
\newcommand{\rarrow}{\longrightarrow}
\newcommand{\down}{\downarrow}
\newcommand{\lrarrow}{\mskip.5\thinmuskip\relbar\joinrel\relbar\joinrel
 \rightarrow\mskip.5\thinmuskip\relax}
\DeclareMathOperator{\Hom}{Hom}
\DeclareMathOperator{\coker}{coker}
\newcommand{\sA}{\mathsf A}
\newcommand{\sB}{\mathsf B}
\newcommand{\sC}{\mathsf C}
\newcommand{\sE}{\mathsf E}
\newcommand{\sM}{\mathsf M}
\newcommand{\sP}{\mathsf P}
\newcommand{\sS}{\mathsf S}
\newcommand{\sT}{\mathsf T}
\newcommand{\cM}{\mathcal M}
\newcommand{\cN}{\mathcal N}
\newcommand{\cP}{\mathcal P}
\newcommand{\cQ}{\mathcal Q}
\renewcommand{\k}{\Bbbk}
\newcommand{\Modl}{{\operatorname{\mathsf{--Mod}}}}
\newcommand{\Vect}{{\operatorname{\mathsf{--Vect}}}}
\newcommand{\vect}{{\operatorname{\mathsf{--vect}}}}
\newcommand{\inj}{\mathsf{inj}}
\newcommand{\proj}{\mathsf{proj}}
\renewcommand{\flat}{\mathsf{flat}}
\newcommand{\id}{\mathrm{id}}
\newcommand{\sop}{\mathsf{op}}
\newcommand{\Sets}{\mathsf{Sets}}
\newcommand{\Section}[1]{\bigskip\section{#1}\medskip}
\theoremstyle{plain}
\newtheorem{thm}{Theorem}[section]
\newtheorem{lem}[thm]{Lemma}
\newtheorem{prop}[thm]{Proposition}
\newtheorem{cor}[thm]{Corollary}
\theoremstyle{definition}
\newtheorem{ex}[thm]{Example}
\newtheorem{exs}[thm]{Examples}
\newtheorem{rem}[thm]{Remark}
\newtheorem{defn}[thm]{Definition}
\begin{document}

\author{Leonid Positselski}

\address{Institute of Mathematics, Czech Academy of Sciences \\
\v Zitn\'a~25, 115~67 Praha~1 \\ Czech Republic}

\email{positselski@math.cas.cz}

\title{On pure monomorphisms and pure epimorphisms \\
in accessible categories}

\begin{abstract}
 In all $\kappa$\+accessible additive categories, $\kappa$\+pure
monomorphisms and $\kappa$\+pure epimorphisms are well-behaved, as
shown in our previous paper~\cite{Plce}.
 This is known to be not always true in $\kappa$\+accessible
nonadditive categories.
 Nevertheless, mild assumptions on a $\kappa$\+accessible category are
sufficient to prove good properties of $\kappa$\+pure monomorphisms
and $\kappa$\+pure epimorphisms.
 In particular, in a $\kappa$\+accessible category with finite
products, all $\kappa$\+pure monomorphisms are $\kappa$\+directed
colimits of split monomorphisms, while in a $\kappa$\+accessible
category with finite coproducts, all $\kappa$\+pure epimorphisms
are $\kappa$\+directed colimits of split epimorphisms.
 We also discuss what we call Quillen exact classes of monomorphisms
and epimorphisms, generalizing the additive concept of one-sided
exact category.
\end{abstract}

\maketitle

\tableofcontents

\section*{Introduction}
\medskip

 The concept of $\kappa$\+purity is an important technical tool in
locally presentable and accessible category
theory~\cite[Section~2.D]{AR}.
 The properties of $\kappa$\+pure monomorphisms and $\kappa$\+pure
epimorphisms have been studied by Ad\'amek, Rosick\'y, and collaborators
in the papers~\cite{AHT,AR2,AR3}, which offer an assortment of theorems
and counterexamples (see also the paper by Hu and Pelletier~\cite{HP}
and the recent prepint of Kanalas~\cite{Kan}).
 However, the basic results about $\kappa$\+purity in
$\kappa$\+accessible categories seem to only have been proved under
assumptions that are somewhat restrictive.
 In this paper, we explain how to relax some assumptions.

 Let us list what we consider the basic results.
 In any $\kappa$\+accessible category with pushouts, all $\kappa$\+pure
monomorphisms are $\kappa$\+directed colimits of split
monomorphisms~\cite[Corollary and Remark~2.30]{AR}.
 In any $\kappa$\+accessible category with pushouts, all $\kappa$\+pure
monomorphisms are regular monomorphisms~\cite[Corollary~1]{AHT}.
 In any $\kappa$\+accessible category with pushouts, the class of
$\kappa$\+pure monomorphisms is stable under
pushouts~\cite[Corollary~2]{AHT}, \cite[Proposition~15(i)]{AR2}.

 In any $\kappa$\+accessible category with pullbacks, all
$\kappa$\+pure epimorphisms are $\kappa$\+directed colimits of split
epimorphisms~\cite[Proposition~3]{AR2}.
 In any $\kappa$\+accessible category with pullbacks, all
$\kappa$\+pure epimorphisms are regular
epimorphisms~\cite[Proposition~4(b)]{AR2}.
 In any locally $\kappa$\+presentable category, the class of
$\kappa$\+pure epimorphisms is stable under
pullbacks~\cite[Proposition~15(ii)]{AR2}.

 In a slightly different setting of the paper~\cite{AR3}, some results
similar to the above ones are stated under milder assumptions.
 In particular, according to~\cite[Lemma~2.2]{AR3}, existence of
weak pushouts in the full subcategory of finitely presentable objects
is sufficient for the pure monomorphisms in a finitely accessible
category to be directed colimits of split monomorphisms.
 By~\cite[Lemma~3.1]{AR3}, existence of weak pullbacks in the full
subcategory of finitely presentable objects is sufficient for the pure
epimorphisms in a finitely accessible category to be directed colimits
of split epimorphisms.
 
 For comparison, in any $\kappa$\+accessible additive category $\sA$,
all $\kappa$\+pure monomorphisms are $\kappa$\+directed colimits of
split monomorphisms, and all $\kappa$\+pure epimorphisms are
$\kappa$\+directed colimits of split epimorphisms.
 All $\kappa$\+pure monomorphisms in $\sA$ are regular monomorphisms,
and all $\kappa$\+pure epimorphisms are regular epimorphisms.
 All pushouts of $\kappa$\+pure monomorphisms always exist in $\sA$,
and the class of $\kappa$\+pure monomorphisms is stable under
pushouts.
 All pullbacks of $\kappa$\+pure epimorphisms exist in $\sA$, and
the class of $\kappa$\+pure epimorphisms is stable under pullbacks.

 Moreover, in any $\kappa$\+accessible additive category, all
$\kappa$\+pure epimorphisms have kernels, and all $\kappa$\+pure
monomorphisms have cokernels.
 The $\kappa$\+pure monomorphisms are precisely the kernels of
the $\kappa$\+pure epimorphisms, and the $\kappa$\+pure epimorphisms
are precisely the cokernels of the $\kappa$\+pure monomorphisms
(this is a generalization of~\cite[Proposition~5]{AR2}).
 All results mentioned in this and the previous paragraph follow from
the exposition in~\cite[Section~4]{Plce}, particularly from
the existence of the $\kappa$\+pure exact structure (in the sense
of Quillen) together with~\cite[Propositions~4.2 and~4.4]{Plce}.

 From our perspective, even such assumptions as existence of weak
pushouts and weak pullbacks are too restrictive, and unnecessarily so,
for the purity theory.
 In particular, an additive category $\sA$ \emph{need not} have weak
pushouts or weak pullbacks.
 For example, the existence of weak pullbacks in the category
$\sA=R\Modl_\inj$ of injective left modules over a ring $R$ is
equivalent to the existence of injective precovers of all left
$R$\+modules (in the sense of the paper~\cite{En}).
 By~\cite[Propositions~2.1 and~2.2]{En}, injective precovers exist
in $R\Modl$ if and only if $R$ is left Noetherian.
 Dually, the existence of weak pushouts in the category
$\sA=R\Modl_\proj$ of projective left $R$\+modules is equivalent to
the existence of projective preenvelopes of all left $R$\+modules.
 By the argument of~\cite[proofs of Propositions~2.1 and~5.1]{En},
the latter condition implies that the infinite direct products of
projective left $R$\+modules are projective, which does not hold for
most rings~$R$ (cf.~\cite[Theorem~P]{Bas}, \cite[Theorem~3.3]{Cha}).

 The aim of this paper is to spell out reasonable conditions on
a $\kappa$\+accessible category $\sA$ that (1)~hold for all
$\kappa$\+accessible additive categories, and (2)~imply good properties
of $\kappa$\+pure monomorphisms and $\kappa$\+pure epimorphisms.
 The reader will see that the resulting conditions are indeed quite
mild.

 Let us emphasize that \emph{some} assumptions are certainly necessary
for the purity theory in nonadditive categories.
 In particular, \cite[Example~2.5]{AR3} provides an example of
a finitely accessible category with a pure monomorphism that is
\emph{not} a directed colimit of split monomorphisms and \emph{not}
a regular monomorphism.
 In Examples~\ref{no-pushouts-of-split-monos-in-acc-preadd-example}
and~\ref{no-pullbacks-of-split-epis-in-acc-preadd-example}, we present
an essentially trivial example of an accessible preadditive (but not
additive!) category in which all monomorphisms and epimorphisms are
split, but some pushouts of monomorphisms and some pullbacks of
epimorphisms do not exist.

 In the context of a $\kappa$\+accessible category $\sA$, we use
the terminology \emph{strongly $\kappa$\+pure monomorphisms} for
the morphisms in $\sA$ that can be obtained as $\kappa$\+directed
colimits of split monomorphisms of $\kappa$\+presentable objects
in~$\sA$.
 Similarly, the \emph{strongly $\kappa$\+pure epimorphisms} are
the $\kappa$\+directed colimits of split epimorphisms between
$\kappa$\+presentable objects.
 We start with establishing very mild sufficient conditions for all
$\kappa$\+pure monomorphisms and $\kappa$\+pure epimorphisms to be
strongly $\kappa$\+pure.
 Then we proceed to provide further, also mild sufficient conditions for
strongly $\kappa$\+pure mono/epimorphisms to be regular and preserved
by pushouts/pullbacks.

 As a generalization of $\kappa$\+pure monomorphisms and $\kappa$\+pure
epimorphisms, we discuss what we call \emph{QE\+mono} and \emph{QE\+epi}
classes of morphisms (where QE means ``Quillen exact'').
 These are nonadditive generalizations of \emph{right exact} and
\emph{left exact} categories inroduced by Rump~\cite[Definition~4
in Section~5]{Rum} and studied by Bazzoni and Crivei~\cite{BC}.
 In the terminology of Henrard and van~Roosmalen~\cite{HR},
the latter (additive categories with additional structure) are called
\emph{inflation-exact} and \emph{deflation-exact} categories.
 See Rosenberg's preprints~\cite[Section~1.1]{Ros1},
\cite[Chapter~I]{Ros2} for prior art in the context of nonadditive
categories.

 Given a $\kappa$\+accessible category $\sA$ with the full subcategory
of $\kappa$\+presentable objects $\sA_{<\kappa}\subset\sA$, and given
a QE\+mono class $\cM$ or QE\+epi class $\cP$ in $\sA_{<\kappa}$, we
prove that the class of all $\kappa$\+directed colimits of morphisms
from $\cM$ (respectively, from $\cP$) is a QE\+mono (resp., QE\+epi)
class of morphisms in~$\sA$.
 This provides a nonadditive generalization of the results
of~\cite[Sections~1\+-2]{Plce}, and simultaneously their extension from
the setting of exact categories (in the sense of Quillen~\cite{Bueh})
to that of right exact and left exact (additive) categories.

 Notice that the classical notion of an exact category in the sense of
Quillen does not seem to make much sense in the nonadditive setting.
 The point is that exact categories are additive categories with
a class of \emph{admissible short exact sequences} $0\rarrow K\overset f
\rarrow L\overset g\rarrow M\rarrow0$, which, first of all, have to be
\emph{kernel-cokernel pairs}: $f=\ker g$ and $g=\coker f$.
 In the context of nonadditive categories, one does not usually
consider kernel-cokernel pairs of morphisms.

 In the general (nonadditive) category theory, there is a natural 
construction of the (\emph{co})\emph{equalizer} of a parallel pair of
morphisms $\bullet\rightrightarrows\bullet$, which is a single morphism
$\bullet\rarrow\bullet$.
 Conversely, to a single morphism $\bullet\rarrow\bullet$, one
assigns it (\emph{co})\emph{kernel pair}, which is a parallel pair
of morphisms $\bullet\rightrightarrows\bullet$.
 So, instead of a single self-dual concept of a kernel-cokernel pair,
in the nonadditive realm there are two concepts, dual to each other,
represented by diagrams of the shape
$$
 \bullet\rarrow\bullet\rightrightarrows\bullet
 \quad\text{or}\quad
 \bullet\rightrightarrows\bullet\rarrow\bullet.
$$
 Accordingly, it seems to be natural to split the single concept
of an exact category in the sense of Quillen into two halves
(the forementioned right exact and left exact categories) before
extending in to the nonadditive world.
 This is the approach that we follow in the present paper.

 Let us mention that, in spite of our discussion above, a self-dual
nonadditive version of Quillen exact categories exists in
the literature, introduced by Dyckerhoff and Kapranov under
the name of \emph{proto-exact categories}~\cite[Section~2.4]{DK}.
 We do not consider this concept in the present paper.

\subsection*{Acknowledgement}
 I~am grateful to Jan \v St\!'ov\'\i\v cek, Jan Trlifaj, and
Ji\v r\'\i\ Rosick\'y for organizing the joint Brno--Prague workshop
in Brno in June~2025 and inviting me to give a talk there, which was
a major stimulating experience for the present research.
 I~also wish to thank an anonymous referee for several helpful comments.
 In particular, Lemmas~\ref{effective-monomorphisms-lemma}
and~\ref{effective-epimorphisms-lemma} were suggested to me by
the referee.
 The author is supported by the GA\v CR project 23-05148S
and the Institute of Mathematics, Czech Academy of Sciences
(research plan RVO:~67985840).

\Section{Preliminaries on Accessible Categories}
\label{preliminaries-secn}

 We use the book~\cite{AR} as the background reference source on
accessible categories.
 In particular, we refer to~\cite[Definition~1.4, Theorem
and Corollary~1.5, Definition~1.13(1), and Remark~1.21]{AR} for
a discussion of \emph{$\lambda$\+directed} vs.\
\emph{$\lambda$\+filtered} colimits.
 For an earlier exposition avoiding a small mistake in~\cite[proof of
Theorem~1.5]{AR}, see~\cite{AN}.

 Let $\kappa$~be a regular cardinal and $\sA$ be a category with
$\kappa$\+directed (equivalently, $\kappa$\+filtered) colimits.
 An object $S\in\sA$ is called
\emph{$\kappa$\+presentable}~\cite[Definition~1.13(2)]{AR}
if the covariant functor $\Hom_\sA(S,{-})\:\sA\rarrow\Sets$ from $\sA$
to the category of sets $\Sets$ preserves $\kappa$\+directed colimits.
 We denote the full subcategory of $\kappa$\+presentable objects
by $\sA_{<\kappa}\subset\sA$.

 The category $\sA$ is called
\emph{$\kappa$\+accessible}~\cite[Definition~2.1]{AR} if there is
a \emph{set} of $\kappa$\+presentable objects $\sS\subset\sA$
such that all the objects of $\sA$ are $\kappa$\+directed colimits
of objects from~$\sS$.
 If this is the case, then the $\kappa$\+presentable objects of $\sA$
are precisely all the retracts of the objects from~$\sS$.

 A category is called \emph{accessible} if it is
$\kappa$\+accessible for some regular cardinal~$\kappa$.
 In the case of the countable cardinal $\kappa=\aleph_0$, one speaks of
\emph{finitely accessible categories}~\cite[Remark~2.2(1)]{AR}.

 Given a class of objects $\sT\subset\sA$, we denote by
$\varinjlim_{(\kappa)}\sT\subset\sA$ the class (or the full subcategory)
of all objects of $\sA$ that can be obtained as $\kappa$\+directed
colimits of objects from~$\sT$.
 The following proposition is well-known.

\begin{prop} \label{accessible-subcategory}
 Let\/ $\sA$ be a $\kappa$\+accessible category and\/ $\sT\subset\sA$
be a set of (some) $\kappa$\+presentable objects.
 Then the full subcategory\/ $\sB=\varinjlim_{(\kappa)}\sT\subset\sA$
is closed under $\kappa$\+directed colimits in~$\sA$.
 The category\/ $\sB$ is $\kappa$\+accessible, and
the $\kappa$\+presentable objects of\/ $\sB$ are precisely all
the retracts of the objects from\/ $\sT$.
 Equivalently, the $\kappa$\+presentable objects of\/ $\sB$ are
precisely all the objects of\/ $\sB$ that are $\kappa$\+presentable
in\/~$\sA$.
 An object $A\in\sA$ belongs to\/ $\sB$ if and only if, for every
object $S\in\sA_{<\kappa}$, every morphism $S\rarrow A$ in\/ $\sA$
factorizes through an object from\/~$\sT$.
\end{prop}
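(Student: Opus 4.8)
The plan is to prove the four assertions in sequence, bootstrapping each from the previous ones. I would begin with the closure statement, since everything else rests on it. To see that $\sB=\varinjlim_{(\kappa)}\sT$ is closed under $\kappa$\+directed colimits in $\sA$, I would invoke the standard idempotency of the $\kappa$\+directed colimit closure: a $\kappa$\+directed colimit of objects, each of which is itself a $\kappa$\+directed colimit of objects from $\sT$, can be rewritten as a single $\kappa$\+directed colimit of objects from $\sT$ by passing to a combined $\kappa$\+filtered index category via a Grothendieck-type construction on the diagrams. The essential consequence I will use repeatedly is that $\kappa$\+directed colimits in $\sB$ are computed as in~$\sA$.

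With closure in hand, the $\kappa$\+accessibility of $\sB$ is almost immediate: by construction every object of $\sB$ is a $\kappa$\+directed colimit of objects of $\sT$, and each $T\in\sT$, being $\kappa$\+presentable in $\sA$, remains $\kappa$\+presentable in $\sB$ precisely because colimits agree. Thus $\sT$ is a set of $\kappa$\+presentable generators, and the description of the $\kappa$\+presentable objects of $\sB$ as the retracts of objects from $\sT$ follows from the general fact about $\kappa$\+accessible categories quoted before the statement. For the equivalent reformulation I would argue both inclusions: an object of $\sB$ that is $\kappa$\+presentable in $\sA$ is $\kappa$\+presentable in $\sB$ since colimits agree, and conversely a $\kappa$\+presentable object of $\sB$ is a retract of some $T\in\sT\subset\sA_{<\kappa}$, hence $\kappa$\+presentable in $\sA$ because retracts of $\kappa$\+presentable objects are $\kappa$\+presentable.

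The factorization criterion is the substantive part. The forward implication is easy: if $A=\varinjlim_j T_j$ with the $T_j\in\sT$ and the index category $\kappa$\+directed, then any morphism $S\rarrow A$ from a $\kappa$\+presentable $S$ factors through some $T_j$ because $\Hom_\sA(S,{-})$ preserves the colimit. For the converse---the main obstacle---I would form the comma category $(\sT/A)$ of all morphisms $T\rarrow A$ with $T\in\sT$. First I would show it is $\kappa$\+filtered, deducing the cocone and coequalization conditions from the known $\kappa$\+filteredness of the canonical diagram $(\sA_{<\kappa}/A)$ (which presents $A$ as $\varinjlim S$ over $S\in\sA_{<\kappa}$) together with the factorization hypothesis: given fewer than $\kappa$ many objects or parallel arrows in $(\sT/A)$, I solve the problem first in $(\sA_{<\kappa}/A)$ and then factor the resulting structure morphism through an object of~$\sT$.

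Finally I would set $B=\varinjlim_{(\sT/A)}T$, which lies in $\sB$ by construction, and show the canonical comparison morphism $B\rarrow A$ is an isomorphism; I expect this to be where the real care is needed. For each $S\in\sA_{<\kappa}$, using $\kappa$\+presentability of $S$ and the filteredness just established, I would check that post-composition induces a bijection $\varinjlim_{(\sT/A)}\Hom_\sA(S,T)\xrightarrow{\sim}\Hom_\sA(S,A)$: surjectivity is exactly the factorization hypothesis, while injectivity requires merging two competing factorizations, which I again reduce to $(\sA_{<\kappa}/A)$ and then push into~$\sT$. Since $\sA$ is $\kappa$\+accessible, the restricted Yoneda functor $\sA\rarrow[\sA_{<\kappa}^{\sop},\Sets]$ is fully faithful, hence conservative, so a morphism inducing bijections on $\Hom_\sA(S,{-})$ for all $\kappa$\+presentable $S$ is invertible; therefore $A\cong B\in\sB$.
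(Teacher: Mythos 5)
The paper gives no argument of its own for this proposition --- its ``proof'' is a pointer to \cite[Proposition~2.1]{Len}, \cite[Section~4.1]{CB}, \cite[Proposition~5.11]{Kra}, and \cite[Proposition~1.2]{Pacc} --- so there is nothing internal to compare you against; but your argument for the substantive part is the standard one and is essentially correct. Showing that the comma category $\sT/A$ of morphisms $T\rarrow A$ with $T\in\sT$ is $\kappa$-filtered by solving each cocone and coequalization problem in the canonical $\kappa$-filtered diagram $\sA_{<\kappa}/A$ and then pushing the solution into $\sT$ via the factorization hypothesis, and then checking that $\varinjlim_{\sT/A}T\rarrow A$ induces bijections on $\Hom_\sA(S,{-})$ for all $S\in\sA_{<\kappa}$ (surjectivity from the hypothesis, injectivity by merging two factorizations through $\sA_{<\kappa}/A$ and pushing into $\sT$), is exactly how such statements are proved; the conservativity you invoke at the end is Lemma~\ref{presentables-strong-generator}. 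The deductions of $\kappa$-accessibility of $\sB$ and of the two descriptions of its $\kappa$-presentable objects are also fine once closure under $\kappa$-directed colimits is known.

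The weak point is your opening step. As literally described, ``a $\kappa$-directed colimit of $\kappa$-directed colimits of objects of $\sT$ is again one, by a Grothendieck-type construction on the diagrams'' does not go through: you are handed arbitrary presentations $A_i=\varinjlim_{j\in J_i}T_{ij}$, and the transition morphisms $A_i\rarrow A_{i'}$ carry no chosen lifts to the presenting diagrams, so there is no functor $i\mapsto(\text{diagram})$ to apply a Grothendieck construction to. One can repair this by first replacing each given presentation by the canonical one over $\sT/A_i$ --- but that is precisely the content of your factorization criterion, so the direct route is circular as ordered. Fortunately your proof of the factorization criterion nowhere uses the closure statement (the colimit over $\sT/A$ lies in $\sB$ because $\kappa$-filtered colimits can be rewritten as $\kappa$-directed ones, not because of closure), so the fix is simply to reorder: prove the factorization criterion first, and closure under $\kappa$-directed colimits then becomes a two-line consequence --- any $S\rarrow\varinjlim_iA_i$ with $S\in\sA_{<\kappa}$ factors through some $A_i$ by $\kappa$-presentability of $S$, hence through some $T\in\sT$ by the criterion applied to $A_i$, hence $\varinjlim_iA_i\in\sB$ by the criterion again. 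With that reordering the proof is complete.
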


\begin{proof}
 In the context of finitely accessible additive categories, this result
goes back to~\cite[Proposition~2.1]{Len}, \cite[Section~4.1]{CB},
and~\cite[Proposition~5.11]{Kra}.
 For the full generality, see, e.~g., \cite[Proposition~1.2]{Pacc}.
\end{proof}

 Let $\sA$, $\sB$, and $\sC$ be three categories, and let
$F\:\sA\rarrow\sC$ and $G\:\sB\rarrow\sC$ be two functors.
 Following~\cite[Notation~2.42]{AR}, we denote by $F\down G$
the category of all triples $(A,B,h)$, where $A\in\sA$ and $B\in\sB$
are two objects and $h\:F(A)\rarrow G(B)$ is a morphism in~$\sC$.
 Morphisms in the category $F\down G$ are defined in the obvious way.

\begin{prop} \label{comma-category-accessible}
 Let $\sA$, $\sB$, and $\sC$ be $\kappa$\+accessible categories, and
let $F\:\sA\rarrow\sC$ and $G\:\sB\rarrow\sC$ be functors preserving
$\kappa$\+directed colimits and taking $\kappa$\+presentable objects
to $\kappa$\+presentable objects.
 Then the category $F\down G$ is $\kappa$\+accessible.
 An object $(S,T,u)\in F\down G$ is $\kappa$\+presentable if and only
if the object $S$ is $\kappa$\+presentable in $\sA$ and the object $T$
is $\kappa$\+presentable in\/~$\sB$.
\end{prop}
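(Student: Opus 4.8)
The plan is to verify the two defining features of $\kappa$\+accessibility directly for $F\down G$ and to pin down the $\kappa$\+presentable objects along the way. First I would check that $\kappa$\+directed colimits exist in $F\down G$ and are computed componentwise. Given a $\kappa$\+directed diagram $(A_i,B_i,h_i)$, put $A=\varinjlim_i A_i$ in $\sA$ and $B=\varinjlim_i B_i$ in $\sB$; since $F$ preserves $\kappa$\+directed colimits we have $F(A)=\varinjlim_i F(A_i)$, so the composites $F(A_i)\overset{h_i}\rarrow G(B_i)\rarrow G(B)$ glue to a unique $h\colon F(A)\rarrow G(B)$, and one checks that $(A,B,h)$ with the evident cocone is the colimit. (Only the preservation property of $F$ is used here.)

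Next I would show that $(S,T,u)$ is $\kappa$\+presentable in $F\down G$ whenever $S\in\sA_{<\kappa}$ and $T\in\sB_{<\kappa}$. A morphism $(S,T,u)\rarrow(A,B,h)$ into a colimit as above is a pair $(f,g)$ with $G(g)\,u=h\,F(f)$. As $S$ and $T$ are $\kappa$\+presentable, $f$ and $g$ factor through a common stage $(A_i,B_i)$; the subtle point is that the commutativity square need not descend to stage~$i$, but the two resulting maps $F(S)\rightrightarrows G(B_i)$ become equal in $G(B)=\varinjlim_i G(B_i)$ (here I use that $G$ preserves $\kappa$\+directed colimits), and since $F(S)$ is $\kappa$\+presentable in $\sC$ (because $F$ carries $\kappa$\+presentables to $\kappa$\+presentables) they already agree at some later stage $k\ge i$. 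This yields the factorization of $(f,g)$, and uniqueness up to refinement is immediate from the $\kappa$\+presentability of $S$ and $T$ taken separately. Up to isomorphism there is only a set of such $(S,T,u)$, since $S$ and $T$ range over sets of $\kappa$\+presentable objects and $u$ over the set $\Hom_\sC(F(S),G(T))$.

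The main work, and the step I expect to be the principal obstacle, is to show that every object $(A,B,h)$ is a $\kappa$\+directed colimit of objects of the above form. For this I would fix canonical $\kappa$\+filtered presentations $A=\varinjlim_{i\in I}S_i$ and $B=\varinjlim_{j\in J}T_j$ by $\kappa$\+presentable objects, and assemble the index category $\mathcal E$ whose objects are triples $(i,j,u)$ with $u\colon F(S_i)\rarrow G(T_j)$ satisfying $G(g_j)\,u=h\,F(f_i)$, morphisms being compatible pairs of transition maps. I would then prove $\mathcal E$ to be $\kappa$\+filtered and the two projections $\mathcal E\rarrow I$ and $\mathcal E\rarrow J$ to be cofinal; the decisive input is once more that each $F(S_i)$ is $\kappa$\+presentable in $\sC$ while $G(B)=\varinjlim_j G(T_j)$, so that every map $F(S_i)\rarrow G(B)$ (in particular $h\,F(f_i)$) factors through some $G(T_j)$ and any two such factorizations are reconciled at a later stage. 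Cofinality of the projections then identifies the componentwise colimit of the forgetful diagram $\mathcal E\rarrow F\down G$ with $(A,B,h)$, establishing $\kappa$\+accessibility.

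Finally, the converse half of the presentability statement is formal. By the description of $\kappa$\+presentable objects as the retracts of the generators (recorded in Section~\ref{preliminaries-secn}), any $\kappa$\+presentable $(S,T,u)$ is a retract of some generator $(S_0,T_0,u_0)$ with $S_0,T_0$ $\kappa$\+presentable; retracts in a comma category are computed componentwise, so $S$ and $T$ are retracts of $S_0$ and $T_0$ and hence $\kappa$\+presentable in $\sA$ and $\sB$, respectively. The delicate aspect throughout is the interaction in the third step between the two independent filtered presentations of $A$ and $B$ and the single structure morphism~$h$, and it is the $\kappa$\+presentability of the objects $F(S_i)$ in $\sC$ that makes the factorizations of maps into $G(B)=\varinjlim_j G(T_j)$ behave coherently.
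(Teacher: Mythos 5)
Your argument is correct, and it is essentially the proof the paper points to: the paper simply cites \cite[proof of Theorem~2.43]{AR} (see also \cite[Proposition~A.3]{Plce}), and your three steps --- componentwise $\kappa$\+directed colimits, $\kappa$\+presentability of objects with $\kappa$\+presentable components via the ``equalize at a later stage'' argument using that $F(S)$ is $\kappa$\+presentable in $\sC$, and the cofinal category of approximations $(i,j,u)$ --- reconstruct that standard argument faithfully. Nothing further is needed.
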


\begin{proof}
 This is~\cite[proof of Theorem~2.43]{AR}; see
also~\cite[Proposition~A.3]{Plce}.
\end{proof}

 The following proposition is a slightly stronger version of
Proposition~\ref{comma-category-accessible}.

\begin{prop} \label{comma-category-objects-directed-colimits}
 Let $\sA$, $\sB$, and $\sC$ be $\kappa$\+accessible categories, and
let $F\:\sA\rarrow\sC$ and $G\:\sB\rarrow\sC$ be functors preserving
$\kappa$\+directed colimits and taking $\kappa$\+presentable objects
to $\kappa$\+presentable objects.
 Let\/ $\sS\subset\sA_{<\kappa}$ and\/ $\sT\subset\sB_{<\kappa}$ be
some chosen subsets of $\kappa$\+presentable objects in\/ $\sA$ and\/
$\sB$ such that all objects of\/ $\sA$ are $\kappa$\+directed colimits
of objects from\/ $\sS$ and all objects of\/ $\sB$ are
$\kappa$\+directed colimits of objects from\/~$\sT$.
 Then all objects of $F\down G$ are $\kappa$\+directed colimits of
objects $(S,T,u)\in F\down G$ with $S\in\sS$ and $T\in\sT$.
\end{prop}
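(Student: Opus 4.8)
The plan is to build an explicit $\kappa$\+directed indexing category out of the given presentations of $A$ and $B$. Fix an object $(A,B,h)$ of $F\down G$ with $h\colon F(A)\to G(B)$, and choose $\kappa$\+directed diagrams $(S_i)_{i\in I}$ in $\sS$ and $(T_j)_{j\in J}$ in $\sT$ with $A=\varinjlim_{i\in I}S_i$ and $B=\varinjlim_{j\in J}T_j$. Since $F$ and $G$ preserve $\kappa$\+directed colimits, we get $F(A)=\varinjlim_i F(S_i)$ and $G(B)=\varinjlim_j G(T_j)$; write $\sigma_i\colon F(S_i)\to F(A)$ and $\tau_j\colon G(T_j)\to G(B)$ for the coprojections. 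As $S_i\in\sA_{<\kappa}$ and $F$ preserves $\kappa$\+presentability, each $F(S_i)$ is $\kappa$\+presentable in $\sC$. I would then let $K$ be the category whose objects are triples $(i,j,u)$ with $i\in I$, $j\in J$, and $u\colon F(S_i)\to G(T_j)$ in $\sC$ satisfying $\tau_j u=h\sigma_i$, and whose morphisms $(i,j,u)\to(i',j',u')$ are the pairs with $i\le i'$, $j\le j'$ for which $u'\circ F(S_{ii'})=G(T_{jj'})\circ u$, where $S_{ii'}$, $T_{jj'}$ denote transition maps. Since $I$ and $J$ are posets, $K$ is a preorder, so for $\kappa$\+directedness only the upper-bound condition needs checking. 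The condition $\tau_j u=h\sigma_i$ is precisely what makes the pair of original coprojections a morphism $(S_i,T_j,u)\to(A,B,h)$ in $F\down G$, so the evident functor $\Phi\colon K\to F\down G$, $(i,j,u)\mapsto(S_i,T_j,u)$, comes equipped with a cocone to $(A,B,h)$.

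The main step is to show that $K$ is $\kappa$\+directed, and this is where I expect the real work to lie. Given fewer than $\kappa$ objects $(i_\alpha,j_\alpha,u_\alpha)$, first pick $i_0\in I$ above all $i_\alpha$. The composite $h\sigma_{i_0}\colon F(S_{i_0})\to G(B)$ factors, by $\kappa$\+presentability of $F(S_{i_0})$ through the $\kappa$\+directed colimit $G(B)=\varinjlim_j G(T_j)$, as $\tau_{j_0}w$ for some $j_0$ and some $w$; enlarging $j_0$ if necessary, we may assume $j_0\ge j_\alpha$ for all $\alpha$, and we obtain a candidate object $(i_0,j_0,u)$ with $\tau_{j_0}u=h\sigma_{i_0}$. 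For each $\alpha$, the two maps $u\circ F(S_{i_\alpha i_0})$ and $G(T_{j_\alpha j_0})\circ u_\alpha$ from $F(S_{i_\alpha})$ to $G(T_{j_0})$ become equal after composing with $\tau_{j_0}$ (both equal $h\sigma_{i_\alpha}$, by the object conditions and compatibility of coprojections); since $F(S_{i_\alpha})$ is $\kappa$\+presentable, they are already equalized by some transition map $T_{j_0 j'_\alpha}$. Collecting the fewer-than-$\kappa$ indices $j'_\alpha$ and using $\kappa$\+directedness of $J$ to find $j$ above all of them, I would replace $u$ by its composite into $G(T_j)$; this produces a common upper bound $(i_0,j,\cdot)$ receiving a morphism from each $(i_\alpha,j_\alpha,u_\alpha)$. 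Applying the construction to the empty family shows $K$ is nonempty.

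It remains to identify the colimit of $\Phi$ with $(A,B,h)$. As in the proof of Proposition~\ref{comma-category-accessible}, the projection $F\down G\to\sA\times\sB$ creates $\kappa$\+directed colimits (using that $F$ preserves them), so the colimit of $\Phi$ is $(\varinjlim_K S_i,\ \varinjlim_K T_j,\ \tilde h)$ for the induced structure map $\tilde h$. The projections $K\to I$ and $K\to J$ are final: for each index the corresponding comma category is nonempty by the factorization construction above, and connected because $K$ is $\kappa$\+directed. Hence $\varinjlim_K S_i=A$ and $\varinjlim_K T_j=B$, while $\tilde h\sigma_i=\tau_j u=h\sigma_i$ for every object $(i,j,u)$ of $K$; as these $\sigma_i$ form the colimit cocone for $F(A)$, this forces $\tilde h=h$. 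Thus $(A,B,h)=\varinjlim_K\Phi$ is a $\kappa$\+directed colimit of objects $(S_i,T_j,u)$ with $S_i\in\sS$ and $T_j\in\sT$, as required. The crux of the whole argument is the two-fold use of $\kappa$\+presentability of the objects $F(S_i)$ in the directedness step, once to produce the structure morphisms $u$, and once to force the finitely many compatibility squares to commute strictly rather than merely in the colimit.
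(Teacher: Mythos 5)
Your argument is correct and is essentially the same as the paper's: the paper simply cites the proof of \cite[Theorem~2.43]{AR}, which is exactly this canonical construction of the $\kappa$\+directed (pre)order of triples $(i,j,u)$ over the chosen presentations of $A$ and $B$, with the same two uses of $\kappa$\+presentability of the objects $F(S_i)$ to establish directedness and the same finality/componentwise-colimit argument to identify the colimit with $(A,B,h)$.
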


\begin{proof}
 This is what is actually proved in~\cite[proof of Theorem~2.43]{AR}.
\end{proof}

 A category $D$ is said to be \emph{finite} if the set of all morphisms
in $D$ is finite.
 More generally, a category $D$ is said to be \emph{$\kappa$\+small}
if the cardinality of the set of all morphisms in $D$ is smaller
than~$\kappa$.

\begin{prop} \label{rigid-finite-diagram-category-accessible}
 Let $\sA$ be a $\kappa$\+accessible category, and let $D$ be a finite
category in which all endomorphisms of objects are identity morphisms.
 Then the category\/ $\sA^D$ of all (covariant) functors $D\rarrow\sA$
is $\kappa$\+accessible.
 A functor $F\:D\rarrow\sA$ is $\kappa$\+presentable as an object of\/
$\sA^D$ if and only if, for every object $d\in D$, the object $F(d)$ is
$\kappa$\+presentable in\/~$\sA$.
\end{prop}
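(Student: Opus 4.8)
The plan is to prove the two assertions (accessibility of $\sA^D$, and the pointwise description of $\kappa$\+presentable objects) simultaneously, by induction, realizing $\sA^D$ as an iterated comma category and feeding it into Proposition~\ref{comma-category-accessible}. First I would record the routine facts: colimits in $\sA^D$ are computed pointwise, so $\sA^D$ has $\kappa$\+directed colimits and each evaluation functor $\mathrm{ev}_d\colon\sA^D\to\sA$ preserves them. Next I would reduce to a \emph{skeletal} $D$: replacing $D$ by a skeleton $D'$ gives an equivalence $\sA^D\simeq\sA^{D'}$ compatible up to isomorphism with the evaluation functors, so both accessibility and the pointwise description transfer. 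The point of this reduction is that a skeletal finite category all of whose endomorphisms are identities has no nonidentity cycles: a cycle of nonidentity morphisms would compose to a nonidentity endomorphism, or else exhibit distinct objects as isomorphic, contradicting skeletality. Hence $D$ becomes a finite \emph{direct} (acyclic) category, and I may speak of a maximal object and of the \emph{height} $h(D)$, the length of the longest chain of nonidentity morphisms in~$D$. This is precisely where the rigidity hypothesis (as opposed to mere finiteness) is used.

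The induction is lexicographic on the pair $\bigl(h(D),\,\#\mathrm{ob}(D)\bigr)$. In the inductive step I would pick a maximal object $d_n\in D$ and let $D_0\subset D$ be the full subcategory on the remaining objects. A functor $F\colon D\to\sA$ is the same datum as its restriction $F_0=F|_{D_0}$, the object $X=F(d_n)$, and a compatible family of morphisms $F(\alpha)\colon F_0(\mathrm{dom}\,\alpha)\to X$ indexed by the morphisms $\alpha$ into $d_n$ (there being no nonidentity morphisms out of $d_n$, by maximality). Writing $E=(D_0\down d_n)$ for the comma category whose objects are the morphisms $d_i\to d_n$ with $d_i\in D_0$, $\pi\colon E\to D_0$ for the projection, $U=\pi^{*}\colon\sA^{D_0}\to\sA^{E}$ for restriction along $\pi$, and $V=\Delta\colon\sA\to\sA^{E}$ for the constant\+diagram functor, that compatible family is exactly a morphism $U(F_0)\to V(X)$ in $\sA^{E}$. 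Thus $\sA^D\simeq U\down V$. I would then check that $U$ and $V$ preserve $\kappa$\+directed colimits (both are pointwise) and take $\kappa$\+presentable objects to $\kappa$\+presentable objects: since $(UF_0)(d_i,\alpha)=F_0(d_i)$ and $(VX)(d_i,\alpha)=X$, by the inductive pointwise description for $E$ it suffices that $U(F_0)$ and $V(X)$ be pointwise $\kappa$\+presentable, which holds whenever $F_0$ is pointwise $\kappa$\+presentable and $X$ is $\kappa$\+presentable. Proposition~\ref{comma-category-accessible} now yields that $\sA^D\simeq U\down V$ is $\kappa$\+accessible and that $(F_0,X,h)$ is $\kappa$\+presentable if and only if $F_0$ is $\kappa$\+presentable in $\sA^{D_0}$ and $X$ is $\kappa$\+presentable in $\sA$; combined with the inductive hypothesis for $D_0$, this is precisely the assertion that $F$ is $\kappa$\+presentable exactly when it is pointwise $\kappa$\+presentable. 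The base case $h(D)=0$ is a discrete $D$, where $\sA^D$ is a finite power of $\sA$ and the claim follows by iterating Proposition~\ref{comma-category-accessible} over the terminal category $\sC=\mathbf 1$.

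The step I expect to be the crux is the termination of this induction: one must know that the auxiliary category $\sA^{E}$ falls under the inductive hypothesis, even though $E=(D_0\down d_n)$ may have far more objects and morphisms than $D$. This is exactly what the height bound provides: every chain of nonidentity morphisms in $E$ projects to such a chain in $D_0$ ending at an object admitting a nonidentity morphism to $d_n$, and appending that morphism lengthens the chain in $D$, so $h(E)\le h(D)-1$. Hence $E$, though possibly larger, is strictly lower in the lexicographic order, while $D_0$ has no greater height and strictly fewer objects and so is lower as well; both are again finite and (as a slice, respectively a full subcategory, of a direct rigid category) rigid, so the inductive hypothesis applies. A routine point to confirm along the way is that $E$ has only identity endomorphisms, which is immediate because an endomorphism of $(d_i,\alpha)$ in $E$ is an endomorphism $\beta$ of $d_i$ in $D_0$, hence the identity. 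As an independent check on the ``$\Leftarrow$'' direction of the characterization, I note that $\Hom_{\sA^D}(F,G)$ is the equalizer of a pair of maps $\prod_{d}\Hom_\sA(F(d),G(d))\rightrightarrows\prod_{\alpha}\Hom_\sA(F(\mathrm{dom}\,\alpha),G(\mathrm{cod}\,\alpha))$, natural in $G$, and that finite limits commute with $\kappa$\+directed colimits in $\Sets$; since the products are finite and colimits in $\sA^D$ are pointwise, $\Hom_{\sA^D}(F,{-})$ preserves $\kappa$\+directed colimits once each $F(d)$ is $\kappa$\+presentable, using only the finiteness of~$D$.
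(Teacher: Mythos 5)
Your argument is correct. There is nothing in the paper to compare it against line by line: the paper's ``proof'' of this proposition is a pointer to the literature ([SGA4], [Mey], [Hen, Theorem~1.3], [Plce, Proposition~A.5]), whereas you supply an actual self-contained derivation from Proposition~\ref{comma-category-accessible}, which is the only external input you need. I checked the points where such an argument could go wrong and they all hold up: the passage to a skeleton is legitimate and is exactly where rigidity is consumed (a cycle of nonidentity morphisms in a skeletal rigid category would force one of its arrows to be an identity), so that $D$ becomes a finite acyclic category with a finite height; a maximal object exists because ``there is a morphism $d\rarrow d'$'' is then a partial order on the finite object set; the identification $\sA^D\simeq U\down V$ with $E=(D_0\down d_n)$ is exact, since naturality of $U(F_0)\rarrow V(X)$ encodes precisely the relations $F(\alpha'\circ\beta)=F(\alpha')\circ F_0(\beta)$ and maximality of $d_n$ guarantees no further relations; the bound $h(E)\le h(D)-1$ correctly makes the lexicographic induction terminate even though $E$ may be larger than $D$; and $D_0$ and $E$ are again finite, rigid, and acyclic, so the inductive hypothesis applies and lets you verify that $U$ and $V$ preserve $\kappa$\+presentables before invoking Proposition~\ref{comma-category-accessible}. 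Your closing equalizer argument for the ``if'' direction is also correct and rightly isolates the fact that this half uses only finiteness of $D$, with rigidity needed only for accessibility of $\sA^D$ and the converse implication. This is a clean way to make the proposition self-contained within the paper's own toolkit.
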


\begin{proof}
 In the case of finitely accessible categories~$\sA$, this result goes
back to~\cite[Expos\'e~I, Proposition~8.8.5]{SGA4}
and~\cite[page~55]{Mey}.
 For an arbitrary regular cardinal~$\kappa$, the desired assertion is
a particular case of~\cite[Theorem~1.3]{Hen}.
 See also~\cite[Proposition~A.5]{Plce}.
\end{proof}

 We use the notation $\varprojlim$ and $\varinjlim$ for limits and
colimits in categories.
 The upper index, such as in $\varprojlim^\sA$ and $\varinjlim^\sA$,
is used to indicate that the (co)limit is taken in the category~$\sA$.
 By \emph{$\kappa$\+small} (\emph{co})\emph{limits} one means
(co)limits of diagrams indexed by $\kappa$\+small indexing
categories~$D$.

\begin{lem} \label{small-colimits-preserved}
 Let\/ $\sA$ be a $\kappa$\+accessible category.
 Then the full subcategory\/ $\sA_{<\kappa}\subset\sA$ of all
$\kappa$\+presentable objects in\/ $\sA$ is closed under all
$\kappa$\+small colimits that exist in\/~$\sA$.
 Furthermore, the fully faithful inclusion functor\/ $\sA_{<\kappa}
\rarrow\sA$ preserves all $\kappa$\+small colimits that exist
in\/~$\sA_{<\kappa}$.
\end{lem}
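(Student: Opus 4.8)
The plan is to reduce both halves of the statement to the single basic fact that, in the category of sets $\Sets$, $\kappa$\+directed (equivalently, $\kappa$\+filtered) colimits commute with $\kappa$\+small limits. This is the foundational commutation property underlying the whole theory of $\kappa$\+presentability; I would invoke it together with the definition of $\kappa$\+presentable object and with the fact that $\sA_{<\kappa}\subset\sA$ is a \emph{full} subcategory. A colimit indexed by a $\kappa$\+small category $D$ is transformed by $\Hom_\sA({-},Y)$ into a limit indexed by $D^\sop$, which has the same (hence $<\kappa$) cardinality of morphisms, so the commutation theorem applies to exactly the limits that arise.

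For the first assertion, let $F\:D\rarrow\sA_{<\kappa}$ be a $\kappa$\+small diagram with values $X_d=F(d)$, and suppose its colimit $X=\varinjlim^\sA_d X_d$ exists in $\sA$. To prove $X\in\sA_{<\kappa}$, I would check directly that $\Hom_\sA(X,{-})$ preserves $\kappa$\+directed colimits. Given a $\kappa$\+directed colimit $Y=\varinjlim_i Y_i$ in $\sA$, I rewrite $\Hom_\sA(X,Y)=\varprojlim_d\Hom_\sA(X_d,Y)$ (turning the defining colimit of $X$ into a $\kappa$\+small limit over $D^\sop$), use $\kappa$\+presentability of each $X_d$ to get $\Hom_\sA(X_d,Y)=\varinjlim_i\Hom_\sA(X_d,Y_i)$, and then apply the commutation property to move the $\kappa$\+small limit past the $\kappa$\+directed colimit. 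This yields $\Hom_\sA(X,Y)=\varinjlim_i\varprojlim_d\Hom_\sA(X_d,Y_i)=\varinjlim_i\Hom_\sA(X,Y_i)$, as required.

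For the converse, suppose a $\kappa$\+small diagram $D\rarrow\sA_{<\kappa}$ has a colimit $C$ in $\sA_{<\kappa}$, with colimiting cocone $(X_d\rarrow C)_{d\in D}$. To show this cocone is already a colimit in $\sA$, I would verify that for every $A\in\sA$ the canonical comparison map $\Hom_\sA(C,A)\rarrow\varprojlim_d\Hom_\sA(X_d,A)$ is bijective. Writing $A=\varinjlim_i A_i$ as a $\kappa$\+directed colimit of objects $A_i\in\sA_{<\kappa}$ (possible by $\kappa$\+accessibility of $\sA$), fullness of the inclusion together with the universal property of $C$ in $\sA_{<\kappa}$ gives, for each fixed index $i$, the identity $\Hom_\sA(C,A_i)=\varprojlim_d\Hom_\sA(X_d,A_i)$. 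Passing to the $\kappa$\+directed colimit over $i$ and commuting it with the $\kappa$\+small limit over $d$, and using that $C$ and all $X_d$ lie in $\sA_{<\kappa}$ (so $\Hom_\sA(C,A)=\varinjlim_i\Hom_\sA(C,A_i)$ and likewise for each $X_d$), I obtain $\Hom_\sA(C,A)=\varprojlim_d\Hom_\sA(X_d,A)$, which is exactly the colimit property of the cocone in $\sA$.

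I do not expect a genuinely hard step here; the content is formal. The only points requiring care are the index-category bookkeeping — confirming that the limits produced by the representable functors are indexed by $\kappa$\+small categories ($D^\sop$ has the same number of morphisms as $D$) so that the commutation theorem is legitimately applicable — and checking that the comparison maps produced by the commutation isomorphisms are indeed the canonical ones induced by the given cocone, which follows from naturality of all the isomorphisms in play.
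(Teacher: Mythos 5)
Your proof is correct and follows essentially the same route as the paper: both halves reduce to the commutation of $\kappa$\+directed colimits with $\kappa$\+small limits in $\Sets$, and your argument for the second assertion is the same chain of $\Hom$\+set identifications the paper writes out. The only difference is that for the first assertion the paper simply cites \cite[Proposition~1.16]{AR}, whereas you spell out that standard argument explicitly.
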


\begin{proof}
 This follows from the fact that $\kappa$\+directed colimits
commute with $\kappa$\+small limits in the category of sets.
 For the first assertion, see~\cite[Proposition~1.16]{AR}.
 To prove the second claim, let $D$ be a $\kappa$\+small
category and let $F\:D\rarrow\sA_{<\kappa}$ be a $D$\+indexed
diagram in $\sA_{<\kappa}$ with the colimit
$A=\varinjlim_{d\in D}^{\sA_{<\kappa}}F(d)\in\sA_{<\kappa}$
computed in the category~$\sA_{<\kappa}$.
 Let $B\in\sA$ be an arbitrary object, and let
$B=\varinjlim_{\xi\in\Xi}^\sA S_\xi$ be a representation of $B$ as
the colimit of a diagram of objects $S_\xi\in\sA_{<\kappa}$,
indexed by a $\kappa$\+directed poset $\Xi$, the colimit being
computed in the category~$\sA$.
 Then in the category of sets we have
\begin{multline*}
 \varprojlim\nolimits_{d\in D}^\Sets\Hom_\sA(F(d),B)=
 \varprojlim\nolimits_{d\in D}^\Sets
 \varinjlim\nolimits_{\xi\in\Xi}^\Sets
 \Hom_{\sA_{<\kappa}}(F(d),S_\xi) \\
 =\varinjlim\nolimits_{\xi\in\Xi}^\Sets
 \varprojlim\nolimits_{d\in D}^\Sets
 \Hom_{\sA_{<\kappa}}(F(d),S_\xi)
 =\varinjlim\nolimits_{\xi\in\Xi}^\Sets
 \Hom_{\sA_{<\kappa}}(A,S_\xi)=\Hom_\sA(A,B),
\end{multline*}
as desired.
\end{proof}

 In the terminology of~\cite[Example~6.38]{AR}, full subcategories
$\sS\subset\sA$ satisfying the assumptions of the next lemma are called
\emph{weakly colimit-dense}.

\begin{lem} \label{limits-preserved}
 Let\/ $\sA$ be a category and\/ $\sS\subset\sA$ be a full subcategory
such that the minimal full subcategory of\/ $\sA$ containing\/ $\sS$
and closed under those colimits that exist in\/ $\sA$, coincides
with\/~$\sA$.
 Then the fully faithful inclusion functor\/ $\sS\rarrow\sA$ preserves
all those limits that exist in\/~$\sS$.
\end{lem}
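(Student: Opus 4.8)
The plan is to recast the statement as a Yoneda-type comparison and then propagate it along colimits. Fix an indexing category $D$ and a diagram $G\:D\rarrow\sS$ that admits a limit $L=\varprojlim^\sS_{d\in D}G(d)$ computed in $\sS$, with limiting cone $(\pi_d\:L\rarrow G(d))_{d\in D}$; note that each $\pi_d$ is a morphism of $\sA$ as well, since $\sS$ is full. I want to prove that $L$, equipped with the same cone regarded in $\sA$, is a limit of $G$ in $\sA$, that is, that for every object $A\in\sA$ the comparison map
$$
 \eta_A\:\Hom_\sA(A,L)\rarrow
 \varprojlim\nolimits_{d\in D}^\Sets\Hom_\sA(A,G(d)),
 \qquad f\longmapsto(\pi_d\circ f)_{d\in D},
$$
is a bijection.

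First I would record the case $A=S\in\sS$. Since $\sS\subset\sA$ is full, $\Hom_\sA(S,L)=\Hom_\sS(S,L)$ and $\Hom_\sA(S,G(d))=\Hom_\sS(S,G(d))$ for all $d$, so $\eta_S$ is exactly the comparison map expressing the universal property of $L$ as a limit in $\sS$; hence $\eta_S$ is a bijection. Next I would package the maps $\eta_A$ as a natural transformation $\eta\:\Phi\rarrow\Psi$ between the two contravariant functors $\Phi,\Psi\:\sA^\sop\rarrow\Sets$ given by $\Phi(A)=\Hom_\sA(A,L)$ and $\Psi(A)=\varprojlim^\Sets_{d\in D}\Hom_\sA(A,G(d))$.

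The key observation is that both $\Phi$ and $\Psi$ carry those colimits that exist in $\sA$ to limits in $\Sets$. For $\Phi$ this is the standard fact that $\Hom_\sA(-,L)$ turns colimits into limits. For $\Psi$, given a colimit $A=\varinjlim^\sA_{i\in I}A_i$ in $\sA$, I would compute
\begin{multline*}
 \Psi(A)
 =\varprojlim\nolimits_{d}^\Sets\Hom_\sA(A,G(d))
 =\varprojlim\nolimits_{d}^\Sets\varprojlim\nolimits_{i}^\Sets
 \Hom_\sA(A_i,G(d)) \\
 \cong\varprojlim\nolimits_{i}^\Sets\varprojlim\nolimits_{d}^\Sets
 \Hom_\sA(A_i,G(d))
 =\varprojlim\nolimits_{i}^\Sets\Psi(A_i),
\end{multline*}
using first that $\Hom_\sA(-,G(d))$ converts the colimit into a limit and then that limits commute with limits in~$\Sets$.

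Finally I would let $\sA'\subset\sA$ be the full subcategory of all objects $A$ for which $\eta_A$ is a bijection. By the previous two paragraphs $\sA'$ contains $\sS$, and it is closed under those colimits that exist in $\sA$: if $A=\varinjlim^\sA_iA_i$ with all $A_i\in\sA'$, then under the identifications $\Phi(A)=\varprojlim_i\Phi(A_i)$ and $\Psi(A)=\varprojlim_i\Psi(A_i)$, naturality of $\eta$ identifies $\eta_A$ with $\varprojlim_i\eta_{A_i}$, which is a bijection because each $\eta_{A_i}$ is and a limit of bijections is a bijection. Thus $\sA'$ is a full subcategory of $\sA$ containing $\sS$ and closed under the colimits existing in $\sA$, so by the minimality hypothesis $\sA'=\sA$, which proves that $\eta_A$ is a bijection for every $A\in\sA$, as required. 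The only mildly delicate points are the commutation of the iterated limit in the computation for $\Psi$ and the verification, via naturality of $\eta$, that $\eta_A$ is genuinely identified with $\varprojlim_i\eta_{A_i}$; I expect neither to present a real obstacle, so the substance of the argument is entirely contained in the closure property of $\sA'$ under colimits.
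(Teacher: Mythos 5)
Your proposal is correct and follows essentially the same route as the paper's own proof: both establish the bijection $\Hom_\sA(A,L)\cong\varprojlim_d\Hom_\sA(A,G(d))$ for $A\in\sS$ via fullness, propagate it along existing colimits using the interchange of limits with limits in $\Sets$, and conclude by the minimality hypothesis. Your write-up is somewhat more explicit than the paper's (which leaves the final appeal to minimality implicit), but the substance is identical.
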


\begin{proof}
 This follows from the fact that limits commute with limits (in any
category, and in particular) in the category of sets.
 Let $D$ be a small category, let $G\:D\rarrow\sS$ be a diagram
indexed by $D$, and let $B=\varprojlim_{d\in D}^\sS G(d)\in\sS$ be
the limit of the diagram $G$ computed in the category~$\sS$.
 We have to prove that the natural map $\Hom_\sA(A,B)\rarrow
\varprojlim_{d\in D}^\Sets\Hom_\sA(A,G(d))$ is a bijection of sets
for all objects $A\in\sA$.

 Denote by $\sE$ the full subcategory of $\sA$ consisting of all
objects $E$ for which the map of sets $\Hom_\sA(E,B)\rarrow
\varprojlim_{d\in D}^\Sets\Hom_\sA(E,G(d))$ is bijective.
 By assumption, we know that $\sS\subset\sE$, and it remains to check
that the full subcategory $\sE\subset\sA$ is closed under those
colimits that exist in~$\sA$.

 Let $C$ be a small category, let $F\:C\rarrow\sE$ be a diagram indexed
by $C$, and let $A=\varinjlim_{c\in C}^\sA F(c)\in\sA$ be the colimit
of the diagram $F$ computed in the category~$\sA$.
 So the map $\Hom_\sA(F(c),B)\rarrow
\varprojlim_{d\in D}^\Sets\Hom_\sA(F(c),G(d))$
is a bijection of sets for all $c\in C$.
 Then it follows that the map $\Hom_\sA(A,B)\rarrow
\varprojlim_{d\in D}^\Sets\Hom_\sA(A,G(d))$ is a bijection as well.
 Indeed, we have
\begin{multline*}
 \varprojlim\nolimits_{d\in D}^\Sets\Hom_\sA(A,G(d))
 =\varprojlim\nolimits_{d\in D}^\Sets
 \varprojlim\nolimits_{c\in C}^\Sets
 \Hom_\sA(F(c),G(d)) \\
 = \varprojlim\nolimits_{c\in C}^\Sets
 \varprojlim\nolimits_{d\in D}^\Sets
 \Hom_\sA(F(c),G(d))=
 \varprojlim\nolimits_{c\in C}^\Sets\Hom_\sA(F(c),B)=\Hom_\sA(A,B).
\end{multline*}
\end{proof}

\begin{lem} \label{presentables-strong-generator}
 Let\/ $\sA$ be a $\kappa$\+accessible category and $A\rarrow B$ be
a morphism in\/ $\sA$ such that the induced map of sets\/ $\Hom_\sA(S,A)
\rarrow\Hom_\sA(S,B)$ is bijective for all $\kappa$\+presentable
objects $S\in\sA$.
 Then the morphism $A\rarrow B$ is an isomorphism in\/~$\sA$.
\end{lem}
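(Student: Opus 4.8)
The plan is to produce an explicit two\+sided inverse $g\:B\rarrow A$ to the given morphism $f\:A\rarrow B$. Using the $\kappa$\+accessibility of $\sA$, I would first fix a representation $B=\varinjlim_{\xi\in\Xi}^\sA S_\xi$ of $B$ as a $\kappa$\+directed colimit of $\kappa$\+presentable objects $S_\xi\in\sA_{<\kappa}$, with colimit cocone $s_\xi\:S_\xi\rarrow B$. The strategy is to lift this cocone along $f$ and then to assemble the lifts, using the universal property of the colimit, into the inverse map.

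Concretely, for each $\xi\in\Xi$ the hypothesis provides a bijection $\Hom_\sA(S_\xi,A)\rarrow\Hom_\sA(S_\xi,B)$ given by composition with~$f$, so its surjectivity yields a (necessarily unique) morphism $t_\xi\:S_\xi\rarrow A$ with $f\circ t_\xi=s_\xi$. For a pair $\xi\leq\eta$ in $\Xi$ with transition morphism $\phi\:S_\xi\rarrow S_\eta$, both $t_\eta\circ\phi$ and $t_\xi$ compose with $f$ to give $s_\xi$, so the injectivity of $\Hom_\sA(S_\xi,f)$ forces $t_\eta\circ\phi=t_\xi$. Hence $(t_\xi)_{\xi\in\Xi}$ is a cocone on the diagram, and it induces a morphism $g\:B\rarrow A$ with $g\circ s_\xi=t_\xi$ for all~$\xi$. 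The identity $f\circ g=\id_B$ is then immediate: the two morphisms $f\circ g$ and $\id_B$ both compose with each $s_\xi$ to give $s_\xi$, and the $s_\xi$ jointly detect morphisms out of the colimit~$B$.

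It remains to verify $g\circ f=\id_A$, and this is the step I expect to be the main obstacle, since --- unlike $f\circ g=\id_B$, which falls directly out of the construction of $g$ from the colimit presentation of $B$ --- the reverse composite has to be checked by invoking the bijectivity hypothesis a second time. The key observation is that for any $\kappa$\+presentable object $S$ and any morphism $h\:S\rarrow A$ one has $f\circ(g\circ f\circ h)=(f\circ g)\circ(f\circ h)=f\circ h$, whence the injectivity of $\Hom_\sA(S,f)$ gives $g\circ f\circ h=h$. Now I would choose a representation $A=\varinjlim_{\zeta}^\sA T_\zeta$ of $A$ as a $\kappa$\+directed colimit of $\kappa$\+presentable objects $T_\zeta$ with cocone $a_\zeta\:T_\zeta\rarrow A$; applying the previous observation to $h=a_\zeta$ gives $(g\circ f)\circ a_\zeta=a_\zeta$ for every $\zeta$, and the universal property of the colimit $A$ then yields $g\circ f=\id_A$. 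This completes the argument that $f$ is an isomorphism with inverse~$g$.

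Finally, I would remark that the full strength of the hypothesis is not actually needed: recalling that every $\kappa$\+presentable object of $\sA$ is a retract of an object from a fixed generating set $\sS$, and that a retract of a bijection is again a bijection, it would suffice to assume that $\Hom_\sA(S,f)$ is bijective for $S$ ranging over $\sS$ alone. Since the lemma already posits bijectivity for all $\kappa$\+presentable~$S$, no such reduction is required, but it clarifies where the accessibility of $\sA$ enters.
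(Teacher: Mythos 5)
Your argument is correct and complete: the lifts $t_\xi$ exist by surjectivity and are forced to be compatible by injectivity, the two composites are checked against the respective colimit cocones, and the uniqueness clause of the universal property finishes the job. The route is, however, not quite the one the paper takes. The paper's proof is a one-line appeal to the fact that every object of a $\kappa$\+accessible category is the colimit of its \emph{canonical} diagram of morphisms from $\kappa$\+presentable objects (density of $\sA_{<\kappa}$); from this, the restricted Yoneda functor $C\mapsto\Hom_\sA({-},C)|_{\sA_{<\kappa}}$ reflects isomorphisms, and the hypothesis says exactly that $f$ becomes invertible under it. Your version instead fixes an \emph{arbitrary} $\kappa$\+directed presentation of $B$ (and then of $A$) and builds the inverse $g$ by hand. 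What your approach buys is self-containedness: you only use the defining property of a $\kappa$\+accessible category (existence of some $\kappa$\+directed presentation by presentables) plus the universal property of colimits, and you never need to know that the canonical diagram is $\kappa$\+filtered or that its colimit is $C$. What the paper's approach buys is brevity and a cleaner conceptual statement (conservativity of the restricted Yoneda embedding), at the cost of citing the density machinery. Your closing remark that bijectivity on a generating set $\sS$ suffices, since every $\kappa$\+presentable object is a retract of an object of $\sS$ and a retract of a bijection is a bijection, is also correct, though not needed for the lemma as stated.
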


\begin{proof}
 This follows from the fact that every object $C\in\sA$ is the colimit
of the canonical diagram of morphisms into $C$ from
$\kappa$\+presentable objects of~$\sA$ \,\cite[Section~0.6, Definition
and Remark~1.23, Remark~2.2(4), and Proposition~2.8(i)]{AR}.
 See also~\cite[Lemma~1.1]{Pacc}.
\end{proof}

\begin{lem} \label{directed-colimits-commute-with-small-limits}
 In any $\kappa$\+accessible category\/ $\sA$, $\kappa$\+directed
colimits commute with those $\kappa$\+small limits that exist
in\/~$\sA$.
 Specifically, if\/ $\Xi$ is $\kappa$\+directed poset, $D$ is
a $\kappa$\+small category, and $F\:\Xi\times D\rarrow\sA$ is
a functor such that the limit\/ $\varprojlim_{d\in D}^\sA F(\xi,d)$
exists in\/ $\sA$ for all $\xi\in\Xi$, then
\begin{equation} \label{directed-colimits-small-limits}
 \varprojlim\nolimits_{d\in D}^\sA
 \varinjlim\nolimits_{\xi\in\Xi}^\sA F(\xi,d)=
 \varinjlim\nolimits_{\xi\in\Xi}^\sA
 \varprojlim\nolimits_{d\in D}^\sA F(\xi,d).
\end{equation}
\end{lem}

\begin{proof}
 This is the generalization of~\cite[Proposition~1.59]{AR} from
locally presentable to accessible categories.
 As in Lemma~\ref{small-colimits-preserved}, the basic explanation for
why this assertion holds is because $\kappa$\+directed colimits commute
with $\kappa$\+small limits in the category of sets.
 If the limit in the left-hand side
of~\eqref{directed-colimits-small-limits} exists, then the assertion
that the natural morphism from the right-hand side to the left-hand
side is an isomorphism follows easily by applying
Lemma~\ref{presentables-strong-generator}.
 Notice that the limits in the right-hand side
of~\eqref{directed-colimits-small-limits} exist by the assumptions
of the present lemma.

 When one wants to prove the existence of the limit in the left-hand
side of~\eqref{directed-colimits-small-limits} rather than assume it,
the following argument works.
 For every object $A\in\sA$, we need to show that the natural map
of sets
$$
 f_A\:\Hom_\sA(A,\>\varinjlim\nolimits_{\xi\in\Xi}^\sA
 \varprojlim\nolimits_{d\in D}^\sA F(\xi,d))\lrarrow
 \varprojlim\nolimits_{d\in D}^\Sets
 \Hom_\sA(A,\>\varinjlim\nolimits_{\xi\in\Xi}^\sA F(\xi,d))
$$
is a bijection.
 When the object $A\in\sA$ is $\kappa$\+presentable, we use the facts
that the covariant functor $\Hom_\sA(A,{-})$ takes both limits and
$\kappa$\+directed colimits in $\sA$ to the respective (co)limits in
$\Sets$ in order to reduce the question to the previously mentioned
assertion that $\kappa$\+directed colimits commute with $\kappa$\+small
limits in the category of sets.
 In the general case, the object $A$ is a ($\kappa$\+directed) colimit
of $\kappa$\+presentable objects, and it remains to point out
that both the domain and the codomain of the map~$f_A$, viewed as
contravariant functors $\sA^\sop\rarrow\Sets$ of the varying object
$A\in\sA$, take colimits in $\sA$ to limits in $\Sets$.
\end{proof}

\Section{Very Weak Cokernel Pairs}
\label{very-weak-cokernel-pairs-secn}

 Let $\sC$ be a category.
 Given a pair of morphisms $i\:A\rarrow B$ and $p\:B\rarrow A$ in
$\sC$ such that the composition $p\circ i=\id_A$ is the identity
morphism, one says that $i$~is a \emph{split monomorphism} and
$p$~is a \emph{split epimorphism} in~$\sC$.

 By a \emph{pushout} in $\sC$ one means the colimit of a diagram of
the shape
\begin{equation} \label{span-diagram}
\begin{gathered}
 \xymatrix{
 B \\
 A \ar^g[r] \ar_f[u] & C
 }
\end{gathered}
\end{equation}
 A \emph{cokernel pair} is a pushout of the diagram as above with
$B=C$ and $f=g$.
 So the cokernel pair of a morphism $f\:A\rarrow B$ in $\sC$ is
a parallel pair of morphisms $k_1$, $k_2\:B\rightrightarrows K$ such 
that $k_1\circ f=k_2\circ f$ and the triple $(K,k_1,k_2)$ is universal
with this property in the category~$\sC$.

 The definition of a \emph{weak colimit} is obtained from the usual
definition of a colimit by dropping the condition of uniqueness of
the required morphism and keeping only the existence.
 Specifically, let $D$ be a small category and $F\:D\rarrow\sC$ be
a $D$\+indexed diagram in~$\sC$.
 Let $A\in\sC$ be an object and $F\rarrow A$ be a compatible cocone
(i.~e., in other words, a morphism from $F$ to the constant
$D$\+indexed diagram in $\sC$ corresponding to the object~$A$).
 Then one says that $A$ is a weak colimit of $F$ if, for every
object $B\in\sC$ and any compatible cocone $F\rarrow B$, there exists
a (not necessarily unique) morphism $A\rarrow B$ in $\sC$ making
the triangular diagram $F(d)\rarrow A\rarrow B$ commutative in $\sC$
for all $d\in D$.

 As particular cases of the general definition of a weak colimit,
one can speak about weak pushouts, weak cokernel pairs, etc.

 Let $f\:A\rarrow B$ be a morphism and $k_1$, $k_2\:
B\rightrightarrows K$ be a weak cokernel pair of~$f$.
 The parallel pair of identity morphisms $\id_B$, $\id_B\:B
\rightrightarrows B$ obviously has the property that the two morphisms
have equal compositions with the morphism~$f$.
 Consequently, there exists a morphism $s\:K\rarrow B$ such that
$s\circ k_1=\id_B=s\circ k_2$.
 Thus both the morphisms $k_1$ and $k_2\:B\rarrow K$ are split
monomorphisms.

\begin{defn} \label{very-weak-cokernel-pair-definition}
 Let $f\:A\rarrow B$ be a morphism in $\sC$ and $c_1$, $c_2\:
B\rightrightarrows C$ be a parallel pair of morphisms such that
$c_1\circ f=c_2\circ f$.
 We will say that a parallel pair of morphisms $k_1$, $k_2\:B
\rightrightarrows K$ in $\sC$ is a \emph{very weak cokernel pair}
of~$f$ \emph{with respect to $(c_1,c_2)$} if the following three
conditions hold:
\begin{itemize}
\item one has $k_1\circ f=k_2\circ f$;
\item there exists a morphism $l\:K\rarrow C$ such that
$c_1=l\circ k_1$ and $c_2=l\circ k_2$;
\item the morphism $k_1\:B\rarrow K$ is a split monomorphism
(i.~e., there exists a morphism $s\:K\rarrow B$ such that
$s\circ k_1=\id_B$).
\end{itemize}
\end{defn}

 The commutative diagram described in
Definition~\ref{very-weak-cokernel-pair-definition} can be drawn as
\begin{equation} \label{very-weak-cokernel-pair-diagram}
\begin{gathered}
 \xymatrix{
  B \ar[rr]^{c_1} \ar@{>..>}[rd]^{k_1}&& C \\
  & K \ar@{..>}[ru]_l \\
  A \ar[uu]^f \ar[rr]_f &&
  B\ar[uu]_{c_2} \ar@{..>}[lu]_{k_2}
 }
\end{gathered}
\end{equation}
 Here the splitting~$s$ of the split monomorphism~$k_1$ is not depicted
on the diagram~\eqref{very-weak-cokernel-pair-diagram};
instead, the condition that $k_1$~is a split monomorphism is
expressed by the tail at the beginning of the dotted arrow
showing~$k_1$.
 
\begin{exs} \label{very-weak-cokernel-pair-examples}
 (1)~If the morphism $f\:A\rarrow B$ has a weak cokernel pair
$k_1$, $k_2\:B\rightrightarrows K$, then $(k_1,k_2)$ is a very weak
cokernel pair of~$f$ with respect to every parallel pair of
morphisms $(c_1,c_2)$ such that $c_1\circ f=c_2\circ f$.
 This is clear from the discussion above.
 In this sense, our terminology is consistent.

\smallskip
 (2)~Let $f\:A\rarrow B$ and $c_1$, $c_2\:B\rightrightarrows C$ be
three morphisms such that $c_1\circ f=c_2\circ f$.
 Assume that the product $K=B\times C$ exists in $\sC$, and denote
by $p_B\:K\rarrow B$ and $p_C\:K\rarrow C$ the product projections.
 Let $k_i\:B\rarrow K$, \,$i=1$, $2$, be the morphisms for which
$p_B\circ k_i=\id_B$ and $p_C\circ k_i=c_i$.
 Then $(k_1,k_2)$ is a very weak cokernel pair of~$f$ with respect
to~$(c_1,c_2)$.
 Indeed, the equation $c_1\circ f=c_2\circ f$ implies
$k_1\circ f=k_2\circ f$ by the uniqueness condition in the universal
property of the product.
 In the notation of
Definition~\ref{very-weak-cokernel-pair-definition}, it remains
to put $l=p_C$ and $s=p_B$.
\end{exs}

 We will say that a category $\sC$ \emph{has very weak cokernel pairs}
if for any three morphisms $f\:A\rarrow B$ and $c_1$, $c_2\:
B\rightrightarrows C$ such that $c_1\circ f=c_2\circ f$ in $\sC$ there
exists a very weak cokernel pair of~$f$ with respect to~$(c_1,c_2)$
in~$\sC$.

\begin{rem} \label{flat-modules-counterex-remark}
 By Example~\ref{very-weak-cokernel-pair-examples}(2), any category
with finite products has very weak cokernel pairs.
 In particular, any additive category has very weak cokernel pairs.

 Notice, however, that an accessible additive category \emph{need not} 
have weak cokernel pairs in general.
 For example, let $R$ be an associative ring, and consider the additive
category of flat left $R$\+modules $\sA=R\Modl_\flat$.
 It is well known that the category $\sA$ is finitely accessible.

 Given an arbitrary left $R$\+module $M$, pick a morphism of flat left
$R$\+modules $f\:A\rarrow B$ such that $M$ is the cokernel of~$f$
in the abelian category $R\Modl$.
 Denote by $m\:B\rarrow M$ the natural epimorphism in $R\Modl$.
 Let $k_1$, $k_2\:B\rightrightarrows K$ be a weak cokernel pair of~$f$
in~$\sA$.
 Then we have $(k_2-k_1)\circ f=0$, hence there exists a morphism
$e\:M\rarrow K$ in $R\Modl$ such that $k_2-k_1=e\circ m$.
 We claim that the morphism~$e$ is a flat preenvelope of $M$, in
the sense of~\cite{En}.

 Indeed, let $g\:M\rarrow L$ be a morphism from $M$ to a flat
left $R$\+module $L$.
 Consider the pair of morphisms $l_1=0\:B\rarrow L$ and
$l_2=g\circ m\:B\rarrow L$.
 Then we have $l_1\circ f=0=g\circ m\circ f=l_2\circ f$.
 By assumption, there exists a morphism $h\:K\rarrow L$ such that
$l_1=h\circ k_1$ and $l_2=h\circ k_2$.
 Hence $g\circ m=l_2-l_1=h\circ (k_2-k_1)=h\circ e\circ m$.
 As the morphism~$m$ is an epimorphism in $R\Modl$, it follows
that $g=h\circ e$.
 Thus the morphism~$g$ factorizes through~$e$, as desired.

 Conversely, if flat preenvelopes exist in $R\Modl$, then all weak
colimits exist in $\sA=R\Modl_\flat$.
 Indeed, given a diagram $F\:D\rarrow\sA$, denote by $M$ the colimit
of $F$ is $R\Modl$.
 Then any flat preenvelope of $M$ is a weak colimit of $F$ in~$\sA$.

 We have shown that weak cokernel pairs exist in $\sA$ if and only if
flat preenvelopes exist in $R\Modl$.
 The latter property holds if and only if the ring $R$ is right
coherent~\cite[Proposition~5.1]{En}.
 Taking a ring $R$ that is \emph{not} right coherent, we obtain
an example of a finitely accessible additive category $\sA$ without
weak cokernel pairs.
\end{rem}

\begin{ex} \label{no-very-weak-cokernel-pair-example}
 Here is an example of a preadditive but not additive category
(i.~e., a category enriched in abelian groups but not having finite
products or finite coproducts) which does \emph{not} even have very
weak cokernel pairs.
 Let $\k$ be a field, $n\ge1$ be an integer, $\k\Vect$ be the category
of $\k$\+vector spaces, and $\sA\subset\k\Vect $ be the full subcategory
of $\k$\+vector spaces of finite dimension not exceeding~$n$.
 For any nonnegative integer~$i$, let $\k^i$ denote the $\k$\+vector
space of dimension~$i$.
 Let $A=0$, $B=\k^n$, and $C=\k^i\in\sA$, where $0<i\le n$.
 Let $f\:A\rarrow B$ be the zero morphism and $c_1$, $c_2\:
B\rightrightarrows C$ be a parallel pair of morphisms such that
$c_1=0$ and $c_2\ne0$.
 Then, of course, $c_1\circ f=c_2\circ f$.
 However, the morphism~$f$ does \emph{not} have a very weak cokernel
pair with respect to~$(c_1,c_2)$.
 Indeed, assume for the sake of contradiction that $k_1$, $k_2\:
B\rightrightarrows K$ is such a very weak cokernel pair.
 Let $l\:K\rarrow C$ and $s\:K\rarrow B$ be the related morphisms.
 So $k_1$~is a split monomorphism, $s\circ k_1=\id_B$.
 Since $B=\k^n$ and the category $\sA$ contains no vector spaces of
dimension greater than~$n$, the morphism~$k_1$ has to be an isomorphism.
 Then the equation $0=c_1=l\circ k_1$ implies $l=0$, which makes
the equation $0\ne c_2=l\circ k_2$ impossible to satisfy.
\end{ex}

\begin{lem} \label{very-weak-cokernel-pairs-in-kappa-presentables}
 Let $\kappa$~be a regular cardinal and\/ $\sA$ be
a $\kappa$\+accessible category with very weak cokernel pairs.
 Then the full subcategory\/ $\sA_{<\kappa}\subset\sA$ of
$\kappa$\+presentable objects in\/ $\sA$ also has very weak cokernel
pairs.
\end{lem}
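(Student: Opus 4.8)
The plan is to lift a very weak cokernel pair from the ambient category $\sA$ down to a $\kappa$\+presentable object, exploiting that the three defining conditions behave very differently under the colimit presentation of objects of~$\sA$. So let $f\:A\rarrow B$ and $c_1$, $c_2\:B\rightrightarrows C$ be morphisms in $\sA_{<\kappa}$ with $c_1\circ f=c_2\circ f$. Since $\sA$ has very weak cokernel pairs, I would first choose a very weak cokernel pair $k_1$, $k_2\:B\rightrightarrows K$ of~$f$ with respect to $(c_1,c_2)$ in~$\sA$, together with the witnessing morphisms $l\:K\rarrow C$, satisfying $c_i=l\circ k_i$, and $s\:K\rarrow B$, satisfying $s\circ k_1=\id_B$. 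The object $K$ lies in $\sA$ but need not be $\kappa$\+presentable, so the goal is to descend the whole configuration to a $\kappa$\+presentable replacement of~$K$.

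Next I would write $K=\varinjlim_{\xi\in\Xi}^\sA K_\xi$ as a $\kappa$\+directed colimit of $\kappa$\+presentable objects $K_\xi$, with coprojections $t_\xi\:K_\xi\rarrow K$; this is possible because $\sA$ is $\kappa$\+accessible. Using that $B$ is $\kappa$\+presentable, the two morphisms $k_1$, $k_2\:B\rarrow K$ factor through the colimit, and by $\kappa$\+directedness of $\Xi$ they can be made to factor through a single index: there is $\xi_0\in\Xi$ and morphisms $k_1'$, $k_2'\:B\rarrow K_{\xi_0}$ with $k_i=t_{\xi_0}\circ k_i'$. The crucial observation is that the second and third conditions of Definition~\ref{very-weak-cokernel-pair-definition} descend for free, simply by restricting $l$ and $s$ along the coprojection: writing $l_{\xi_0}=l\circ t_{\xi_0}$ and $s_{\xi_0}=s\circ t_{\xi_0}$, one computes $l_{\xi_0}\circ k_i'=l\circ k_i=c_i$ and $s_{\xi_0}\circ k_1'=s\circ k_1=\id_B$, so $k_1'$ is already a split monomorphism and $l_{\xi_0}$ already recovers $c_1$ and $c_2$ from $k_1'$ and~$k_2'$.

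The hard part is the first condition, $k_1'\circ f=k_2'\circ f$, which is an \emph{equation between morphisms into $K_{\xi_0}$} and may genuinely fail at level~$\xi_0$. What is known is only that $t_{\xi_0}\circ k_1'\circ f=k_1\circ f=k_2\circ f=t_{\xi_0}\circ k_2'\circ f$ as morphisms $A\rarrow K$. Here I would invoke $\kappa$\+presentability of~$A$: the canonical map $\varinjlim_{\xi}\Hom_\sA(A,K_\xi)\rarrow\Hom_\sA(A,K)$ is bijective, so two elements of $\Hom_\sA(A,K_{\xi_0})$ with equal image in the colimit must become equal after composition with some transition morphism $u\:K_{\xi_0}\rarrow K_{\xi_1}$, $\xi_1\ge\xi_0$. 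Setting $k_i''=u\circ k_i'\:B\rarrow K_{\xi_1}$ then yields $k_1''\circ f=k_2''\circ f$, while the compatibility $t_{\xi_1}\circ u=t_{\xi_0}$ ensures that $t_{\xi_1}\circ k_i''=k_i$ and that the further-restricted morphisms $l_{\xi_1}=l\circ t_{\xi_1}$ and $s_{\xi_1}=s\circ t_{\xi_1}$ still satisfy $l_{\xi_1}\circ k_i''=c_i$ and $s_{\xi_1}\circ k_1''=\id_B$. Since $K_{\xi_1}$ is $\kappa$\+presentable, the pair $(k_1'',k_2'')$, with witnessing morphisms $l_{\xi_1}$ and $s_{\xi_1}$, is a very weak cokernel pair of~$f$ with respect to $(c_1,c_2)$ lying entirely in $\sA_{<\kappa}$, as desired.
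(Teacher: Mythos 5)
Your proposal is correct and follows essentially the same route as the paper's proof: pass to a very weak cokernel pair in $\sA$, present its vertex as a $\kappa$\+directed colimit of $\kappa$\+presentable objects, use $\kappa$\+presentability of $B$ to factor $k_1,k_2$ through a single stage, use $\kappa$\+presentability of $A$ to push forward along a transition morphism until the equation $k_1\circ f=k_2\circ f$ holds, and obtain the witnessing morphisms $l$ and $s$ by composing with the colimit coprojection. The identification of the first condition as the only one requiring the second refinement step matches the structure of the paper's argument exactly.
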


\begin{proof}
 Let $f\:A\rarrow B$ and $c_1$, $c_2\:B\rightrightarrows C$ be three
morphisms in $\sA_{<\kappa}$ such that $c_1\circ\nobreak f=c_2\circ f$.
 Let $x_1$, $x_2\:B\rightrightarrows X$ be a very weak cokernel pair
of~$f$ with respect to~$(c_1,c_2)$ in the category~$\sA$, and let
$y\:X\rarrow C$ and $t\:X\rarrow B$ be the related morphisms.
 Let $X=\varinjlim_{\xi\in\Xi}K_\xi$ be a representation of $X$ as
a $\kappa$\+directed colimit of $\kappa$\+presentable objects $K_\xi$
in~$\sA$, indexed by a $\kappa$\+directed poset~$\Xi$.
 Denote by $w_{\eta\xi}\:K_\xi\rarrow K_\eta$ (where $\xi\le\eta$
in $\Xi$) the transition morphisms and by $z_\xi\:K_\xi\rarrow X$
the canonical morphisms into the colimit.

 Since $\Xi$ is $\kappa$\+directed and $B$ is $\kappa$\+presentable,
there exists an index $\xi\in\Xi$ such that both the morphisms
$x_1$ and $x_2\:B\rightrightarrows X$ factorize through
the morphism~$z_\xi$.
 So we have a parallel pair of morphisms $k'_1$, $k'_2\:
B\rightrightarrows K_\xi$ such that $x_i=z_\xi\circ k'_i$
for $i=1$,~$2$.
 By assumption, we have $x_1\circ f=x_2\circ f$, so $z_\xi\circ k'_1
\circ f=z_\xi\circ k'_2\circ f$.
 Since $\Xi$ is $\kappa$\+directed and $A$ is $\kappa$\+presentable,
there exists an index $\eta\in\Xi$, \,$\xi\le\eta$, such that
$w_{\eta\xi}\circ k'_1\circ f=w_{\eta\xi}\circ k'_2\circ f$.
 Put $k_i=w_{\eta\xi}\circ k'_i$ for $i=1$, $2$, and $K=K_\eta$.

 Then $k_1$, $k_2\:B\rightrightarrows K$ is a very weak cokernel pair
of~$f$ with respect to $(c_1,c_2)$ in the category~$\sA_{<\kappa}$.
 Indeed, we have already seen that $k_1\circ f=k_2\circ f$.
 In the notation of
Definition~\ref{very-weak-cokernel-pair-definition}, it remains to
put $l=y\circ z_\eta$ and $s=t\circ z_\eta$.
\end{proof}

\Section{Strongly Pure Monomorphisms}
\label{strongly-pure-monomorphisms-secn}

 Let $\kappa$~be a regular cardinal and $\sA$ be a $\kappa$\+accessible
category.
 A morphism $m\:C\rarrow D$ is said to be a \emph{$\kappa$\+pure
monomorphism}~\cite[Definition~2.27]{AR} in $\sA$ if, for every morphism
$S\rarrow T$ in $\sA_{<\kappa}$ and any commutative square diagram
\begin{equation} \label{pure-monomorphism-definition-diagram}
\begin{gathered}
 \xymatrix{
  C \ar[r]^m & D \\
  S \ar[u]^c \ar[r]_t & T \ar[u]_d
 }
 \qquad\qquad
 \xymatrix{
  C \\
  S \ar[u]^c \ar[r]_t & T \ar@{..>}[lu]_e
 }
\end{gathered}
\end{equation}
in~$\sA$, there exists a morphism $e\:T\rarrow C$ making the lower
triangle commutative.

 Let $I=(\bullet\to\bullet)$ be the category with two objects and
one nonidentity morphism (acting from one object of $I$ to
the other one).
 Given a category $\sC$, we denote by $\sC^\to=\sC^I$ the category
of functors $I\rarrow\sC$, i.~e., the category of morphisms in~$\sC$.
 So the objects of $\sC^\to$ are all the morphisms in $\sC$, and
the morphisms in $\sC^\to$ are all the commutative squares in~$\sC$.

\begin{lem} \label{pure-monomorphisms-simple-properties}
\textup{(a)} All split monomorphisms in\/ $\sA$ are $\kappa$\+pure
monomorphisms. \par
\textup{(b)} All $\kappa$\+pure monomorphisms are monomorphisms
in~$\sA$. \par
\textup{(c)} The class of $\kappa$\+pure monomorphisms is closed under
compositions of morphisms in\/~$\sA$. \par
\textup{(d)} If $i$, $j$ is a composable pair of morphisms in\/ $\sA$
and $i\circ j$ is a $\kappa$\+pure monomorphism, then $j$~is
a $\kappa$\+pure monomorphism. \par
\textup{(e)} The class of $\kappa$\+pure monomorphisms in\/ $\sA$ is
closed under $\kappa$\+directed colimits in\/~$\sA^\to$.
\end{lem}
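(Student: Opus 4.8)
The plan is to establish all five assertions by diagram chasing, invoking the factorization properties of $\kappa$\+presentable objects only for parts~(b) and~(e). Part~(a) is immediate: if $m\:C\rarrow D$ is a split monomorphism with $r\circ m=\id_C$, then for a square as in~\eqref{pure-monomorphism-definition-diagram} with $m\circ c=d\circ t$ I would put $e=r\circ d$, so that $e\circ t=r\circ d\circ t=r\circ m\circ c=c$. Part~(c) is a two\+step lifting: given composable $\kappa$\+pure monomorphisms $m\:C\rarrow D$ and $m'\:D\rarrow E$ and a square testing $m'\circ m$, I would first use $\kappa$\+purity of $m'$ to lift $m\circ c$ to a morphism $e'\:T\rarrow D$ with $e'\circ t=m\circ c$, and then use $\kappa$\+purity of $m$ applied to the square with right edge $e'$ to produce the desired $e\:T\rarrow C$ with $e\circ t=c$. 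Part~(d) is even shorter: given a square testing $j$ with $j\circ c=d\circ t$, I would postcompose its right edge with $i$ to obtain a square testing $i\circ j$, whose lifting provided by $\kappa$\+purity of $i\circ j$ is already the morphism $e$ with $e\circ t=c$ required for~$j$.

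Part~(b) requires a genuine idea. To prove that a $\kappa$\+pure monomorphism $m\:C\rarrow D$ is a monomorphism, I would take $u$, $v\:W\rightrightarrows C$ with $m\circ u=m\circ v$ and first reduce to a $\kappa$\+presentable source: expressing $W$ as a $\kappa$\+directed colimit of $\kappa$\+presentable objects and precomposing, it is enough to treat $u$, $v\:S\rightrightarrows C$ with $S\in\sA_{<\kappa}$. Writing $C=\varinjlim_\alpha C_\alpha$ as a $\kappa$\+directed colimit of $\kappa$\+presentable objects with colimit morphisms $a_\alpha\:C_\alpha\rarrow C$, I would factor both maps through one stage, $u=a_\alpha\circ u_\alpha$ and $v=a_\alpha\circ v_\alpha$. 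The morphism $m\circ a_\alpha\:C_\alpha\rarrow D$ has $\kappa$\+presentable source and satisfies $(m\circ a_\alpha)\circ u_\alpha=(m\circ a_\alpha)\circ v_\alpha$; writing $D=\varinjlim_\beta D_\beta$, I would factor $m\circ a_\alpha$ through a $\kappa$\+presentable object $D_{\beta'}$ by a morphism $q\:C_\alpha\rarrow D_{\beta'}$ chosen, using $\kappa$\+presentability of $S$, so that already $q\circ u_\alpha=q\circ v_\alpha$. The resulting square with bottom edge $q$, left edge $a_\alpha$, and right edge the colimit morphism $D_{\beta'}\rarrow D$ commutes, so $\kappa$\+purity of $m$ yields $e\:D_{\beta'}\rarrow C$ with $e\circ q=a_\alpha$. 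Then $u=a_\alpha\circ u_\alpha=e\circ q\circ u_\alpha=e\circ q\circ v_\alpha=a_\alpha\circ v_\alpha=v$.

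For part~(e) I would take a $\kappa$\+directed diagram of $\kappa$\+pure monomorphisms $m_\xi\:C_\xi\rarrow D_\xi$ in $\sA^\to$, with colimit $m\:C\rarrow D$ computed objectwise, so that $C=\varinjlim_\xi C_\xi$ and $D=\varinjlim_\xi D_\xi$. Given a test square for $m$ over a morphism $t\:S\rarrow T$ in $\sA_{<\kappa}$ with $m\circ c=d\circ t$, I would use $\kappa$\+presentability of $S$ and $T$ to factor $c$ and $d$ through a common stage~$\xi$, as $c_\xi\:S\rarrow C_\xi$ and $d_\xi\:T\rarrow D_\xi$. The two morphisms $m_\xi\circ c_\xi$ and $d_\xi\circ t$ into $D_\xi$ have equal images in $D$ (namely $m\circ c=d\circ t$), so by $\kappa$\+presentability of $S$ they become equal after passing to some later stage~$\eta$. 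At stage~$\eta$ the square strictly commutes, whence $\kappa$\+purity of $m_\eta$ provides a lifting $e_\eta\:T\rarrow C_\eta$; composing with the colimit morphism $C_\eta\rarrow C$ produces the required $e\:T\rarrow C$ with $e\circ t=c$.

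The main obstacle is part~(b). The naive approach of lifting $u$ and $v$ through separate squares yields two unrelated liftings and never forces $u=v$; the essential manoeuvre is to equalize the two maps inside a single $\kappa$\+presentable object $D_{\beta'}$ of the codomain first, and only then to apply $\kappa$\+purity to one square, so that a single lifting $e$ serves both maps at once. Part~(e), while longer, is routine once one is careful with the bookkeeping of the two objectwise $\kappa$\+directed colimits and with the passage first to a common and then to a later index, ensuring the test square becomes strictly commutative before $\kappa$\+purity is invoked.
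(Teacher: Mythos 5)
Your proposal is correct, and all five arguments are precisely the standard ones: the paper itself does not write them out but cites \cite[Example~2.28(1), Remarks~2.28(1--2), Propositions~2.29 and~2.30(i)]{AR}, and your diagram chases for (a), (c), (d), your equalize-in-the-codomain-first argument for (b), and your two-stage factorization for (e) reproduce exactly those proofs. The only cosmetic remark is that in part~(e) the paper additionally flags the fact that objects of $(\sA_{<\kappa})^\to$ are $\kappa$\+presentable in $\sA^\to$, which your direct objectwise bookkeeping replaces without loss.
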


\begin{proof}
 All the assertions are well-known.
 Parts~(a) and~(c\+-d), mentioned in~\cite[Example~2.28(1) and
Remarks~2.28(1\+-2)]{AR}, are elementary.
 Part~(b) is~\cite[Proposition~2.29]{AR}.
 In part~(e), which is~\cite[Proposition~2.30(i)]{AR}, one needs to
use the fact that all the objects of $(\sA_{<\kappa})^\to$ are
$\kappa$\+presentable in~$\sA^\to$.
\end{proof}

 It follows from
Lemma~\ref{pure-monomorphisms-simple-properties}(a,e) that
$\kappa$\+directed colimits of split monomorphisms are
$\kappa$\+pure monomorphisms in~$\sA$.
 The aim of this section is to provide a mild sufficient condition
for the inverse implication.
 In fact, we will prove a little bit more.

 Let us say that a morphism $m\:C\rarrow D$ in $\sA$ is
a \emph{strongly $\kappa$\+pure monomorphism} if $m$~is
a $\kappa$\+directed colimit in $\sA^\to$ of split monomorphisms
between $\kappa$\+presentable objects in~$\sA$.

\begin{prop} \label{strongly-pure-monomorphisms-characterized}
 A morphism~$m$ in\/ $\sA$ is a strongly $\kappa$\+pure monomorphism
if and only if any morphism into~$m$ from a morphism in\/
$\sA_{<\kappa}$ factorizes through a split monomorphism in\/
$\sA_{<\kappa}$ in the category\/ $\sA^\to$.
 In other words, $m$~is a strongly $\kappa$\+pure monomorphism if and
only if, for any morphism~$t$ in\/ $\sA_{<\kappa}$ and any morphism
$t\rarrow m$ in $\sA^\to$ there exists a split monomorphism~$s$
in\/ $\sA_{<\kappa}$ such that the morphism $t\rarrow m$ factorizes
as $t\rarrow s\rarrow m$ for some morphisms $t\rarrow s$
and $s\rarrow m$ in\/~$\sA^\to$.
\end{prop}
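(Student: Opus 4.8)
The plan is to derive the proposition from the general description of subcategories of the form $\varinjlim_{(\kappa)}\sT$ provided by Proposition~\ref{accessible-subcategory}, applied not to $\sA$ itself but to the arrow category $\sA^\to$. First I would apply Proposition~\ref{rigid-finite-diagram-category-accessible} with $D=I$: the two\+object category $\bullet\rarrow\bullet$ is finite and has only identity endomorphisms, so $\sA^\to=\sA^I$ is again $\kappa$\+accessible, and a morphism $t\:S\rarrow T$ of $\sA$ is $\kappa$\+presentable as an object of $\sA^\to$ exactly when both $S$ and $T$ are $\kappa$\+presentable in $\sA$. In particular, the $\kappa$\+presentable objects of $\sA^\to$ are precisely the morphisms in $\sA_{<\kappa}$.

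Next I would fix a set $\sT$ of $\kappa$\+presentable objects of $\sA^\to$ consisting of one representative of each isomorphism class of split monomorphisms between $\kappa$\+presentable objects of $\sA$. This is legitimate because the $\kappa$\+presentable objects of $\sA$ form a set up to isomorphism (as $\sA$ is $\kappa$\+accessible), hence so do the split monomorphisms between them, and each such morphism is a $\kappa$\+presentable object of $\sA^\to$ by the previous step. By construction, the full subcategory $\varinjlim_{(\kappa)}\sT\subset\sA^\to$ is exactly the class of $\kappa$\+directed colimits in $\sA^\to$ of split monomorphisms between $\kappa$\+presentable objects of $\sA$, that is, the class of strongly $\kappa$\+pure monomorphisms.

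It then remains to read off the last assertion of Proposition~\ref{accessible-subcategory}, applied to $\sT\subset\sA^\to$: an object $m\in\sA^\to$ lies in $\varinjlim_{(\kappa)}\sT$ if and only if, for every $\kappa$\+presentable object $t$ of $\sA^\to$, every morphism $t\rarrow m$ in $\sA^\to$ factorizes through an object of $\sT$. Under the identifications above --- the $\kappa$\+presentable objects of $\sA^\to$ being the morphisms $t$ in $\sA_{<\kappa}$, and the objects of $\sT$ being split monomorphisms in $\sA_{<\kappa}$ --- this is precisely the stated factorization criterion.

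The substantive content is carried entirely by Propositions~\ref{accessible-subcategory} and~\ref{rigid-finite-diagram-category-accessible}; the only point needing care is the passage between ``factorizes through an object of $\sT$'' and ``factorizes through a split monomorphism in $\sA_{<\kappa}$''. These are interchangeable because $\sT$ meets every isomorphism class, so any factorization through a split monomorphism in $\sA_{<\kappa}$ may be transported along an isomorphism in $\sA^\to$ to a factorization through an object of $\sT$, and conversely. I expect this set\+theoretic bookkeeping, together with the correct identification of the $\kappa$\+presentable objects of $\sA^\to$, rather than any genuine categorical difficulty, to be the main --- and rather mild --- obstacle.
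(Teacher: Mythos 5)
Your proposal is correct and follows essentially the same route as the paper: identify $(\sA^\to)_{<\kappa}=(\sA_{<\kappa})^\to$ via Proposition~\ref{rigid-finite-diagram-category-accessible} with $D=I$, then apply the factorization criterion from Proposition~\ref{accessible-subcategory} to the set of representatives of isomorphism classes of split monomorphisms in $\sA_{<\kappa}$. The bookkeeping about representatives that you flag is exactly the parenthetical the paper itself includes, so there is no gap.
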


\begin{proof}
 According to
Proposition~\ref{rigid-finite-diagram-category-accessible} for $D=I$
(or to Proposition~\ref{comma-category-accessible} for $\sA=\sB=\sC$),
the category $\sA^\to$ is $\kappa$\+accessible, and
the $\kappa$\+presentable objects of $\sA^\to$ are precisely all
the morphisms between $\kappa$\+presentable objects in $\sA$,
that is $(\sA^\to)_{<\kappa}=(\sA_{<\kappa})^\to$.
 (This observation can be found in~\cite[Exercise~2.c]{AR}.)
 The desired assertion is now provided by
Proposition~\ref{accessible-subcategory} applied to
the $\kappa$\+accessible category $\sA^\to$ and the set of
$\kappa$\+presentable objects $\sT$ consisting of all
the (representatives of isomorphism classes) of split monomorphisms
in~$\sA_{<\kappa}$.
\end{proof}

\begin{lem} \label{strongly-pure-monos-directed-colimit-closed}
 The class of strongly $\kappa$\+pure monomorphisms in\/ $\sA$ is
closed under $\kappa$\+directed colimits in\/ $\sA^\to$.
\end{lem}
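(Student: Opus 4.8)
The plan is to observe that the class of strongly $\kappa$\+pure monomorphisms is, by its very definition, nothing but the full subcategory $\sB=\varinjlim_{(\kappa)}\sT\subset\sA^\to$ that already appeared in the proof of Proposition~\ref{strongly-pure-monomorphisms-characterized}, where $\sT$ denotes the set of (representatives of isomorphism classes of) split monomorphisms between $\kappa$\+presentable objects in $\sA$. First I would recall, as in that proof, that by Proposition~\ref{rigid-finite-diagram-category-accessible} applied to $D=I$ the category $\sA^\to$ is $\kappa$\+accessible and $(\sA^\to)_{<\kappa}=(\sA_{<\kappa})^\to$. Since $\sA_{<\kappa}$ is essentially small, the split monomorphisms in $\sA_{<\kappa}$ form, up to isomorphism, a genuine \emph{set} $\sT$, and each object of $\sT$ is $\kappa$\+presentable in~$\sA^\to$. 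Thus $\sT$ is exactly a set of $\kappa$\+presentable objects of the $\kappa$\+accessible category~$\sA^\to$, as required to invoke the structural result.

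With this identification in place, the lemma is immediate from Proposition~\ref{accessible-subcategory}: applied to the $\kappa$\+accessible category $\sA^\to$ and the set $\sT$, it asserts precisely that the full subcategory $\sB=\varinjlim_{(\kappa)}\sT\subset\sA^\to$ is closed under $\kappa$\+directed colimits in~$\sA^\to$. Since $\sB$ is exactly the class of strongly $\kappa$\+pure monomorphisms, this is the assertion to be proved. So my main task reduces to carefully matching the definition of strongly $\kappa$\+pure monomorphism with the subcategory $\varinjlim_{(\kappa)}\sT$ and checking the (routine) set-theoretic and $\kappa$\+presentability hypotheses of Proposition~\ref{accessible-subcategory}.

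As a fallback, should a more self-contained argument be preferred, I would instead use the factorization characterization of Proposition~\ref{strongly-pure-monomorphisms-characterized} directly. Given a $\kappa$\+directed colimit $m=\varinjlim_{\xi}m_\xi$ in $\sA^\to$ of strongly $\kappa$\+pure monomorphisms $m_\xi$, and any morphism $t\rarrow m$ out of a $\kappa$\+presentable object $t$ of $\sA^\to$, one uses $\kappa$\+presentability of $t$ in $\sA^\to$ to factor $t\rarrow m$ through one of the canonical morphisms $m_\xi\rarrow m$; then strong purity of $m_\xi$ supplies a split monomorphism $s$ in $\sA_{<\kappa}$ through which $t\rarrow m_\xi$ factors, and composing gives the desired factorization $t\rarrow s\rarrow m$. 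Applying the characterization in the reverse direction then yields that $m$ is strongly $\kappa$\+pure.

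I do not expect any serious obstacle here: the statement is essentially a reformulation of Proposition~\ref{accessible-subcategory} for the specific $\kappa$\+accessible category $\sA^\to$ and the specific generating set $\sT$. The only point requiring a word of care is the verification that $\sT$ is a set of objects that are $\kappa$\+presentable in~$\sA^\to$, which is exactly the content of the equality $(\sA^\to)_{<\kappa}=(\sA_{<\kappa})^\to$ already established; everything else is formal.
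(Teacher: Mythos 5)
Your main argument is exactly the paper's proof: the class of strongly $\kappa$\+pure monomorphisms is $\varinjlim_{(\kappa)}\sT\subset\sA^\to$ for the set $\sT$ of split monomorphisms in $\sA_{<\kappa}=(\sA^\to)_{<\kappa}$-objects, and the closure under $\kappa$\+directed colimits is the first assertion of Proposition~\ref{accessible-subcategory} applied to the $\kappa$\+accessible category $\sA^\to$, just as set up in the proof of Proposition~\ref{strongly-pure-monomorphisms-characterized}. The proposal is correct and takes essentially the same route as the paper.
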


\begin{proof}
 This is another assertion from
Proposition~\ref{accessible-subcategory}, applicable to the situation
at hand as explained in the proof of
Proposition~\ref{strongly-pure-monomorphisms-characterized}.
\end{proof}

\begin{thm} \label{under-very-weak-cokernel-pairs-theorem}
 In any $\kappa$\+accessible category\/ $\sA$ with very weak cokernel
pairs, the classes of $\kappa$\+pure monomorphisms and strongly
$\kappa$\+pure monomorphisms coincide.
 In other words, all $\kappa$\+pure monomorphisms in\/ $\sA$ are
$\kappa$\+directed colimits of split monomorphisms between
$\kappa$\+presentable objects of\/~$\sA$.
\end{thm}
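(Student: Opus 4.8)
The plan is to establish the two inclusions between the classes separately. The inclusion of strongly $\kappa$\+pure monomorphisms into $\kappa$\+pure monomorphisms needs no hypothesis at all and has in effect already been recorded: a split monomorphism between $\kappa$\+presentable objects is $\kappa$\+pure by Lemma~\ref{pure-monomorphisms-simple-properties}(a), and the class of $\kappa$\+pure monomorphisms is closed under $\kappa$\+directed colimits in $\sA^\to$ by Lemma~\ref{pure-monomorphisms-simple-properties}(e). Thus the whole content lies in the opposite inclusion, and to prove it I would invoke the criterion of Proposition~\ref{strongly-pure-monomorphisms-characterized}: it suffices to show that, for a $\kappa$\+pure monomorphism $m\:C\rarrow D$, any morphism $t\rarrow m$ in $\sA^\to$ out of an object $t\:S\rarrow T$ of $(\sA_{<\kappa})^\to$ factorizes in $\sA^\to$ through a split monomorphism between $\kappa$\+presentable objects.

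So I would fix such a morphism, given by $c\:S\rarrow C$ and $d\:T\rarrow D$ with $mc=dt$, and first feed the resulting commutative square into the definition of $\kappa$\+purity to obtain a morphism $e\:T\rarrow C$ with $et=c$. The key observation driving the argument is that the two morphisms $me$ and $d\:T\rightrightarrows D$ agree after precomposition with~$t$, since $(me)t=m(et)=mc=dt$; they therefore form exactly the kind of parallel pair for which a very weak cokernel pair of~$t$ may be taken. Granting for a moment that all the data lives in $\sA_{<\kappa}$, I would take a very weak cokernel pair $k_1$, $k_2\:T\rightrightarrows K$ of~$t$ with respect to $(me,d)$ --- with a splitting of~$k_1$ and a comparison morphism $l\:K\rarrow D$ as in Definition~\ref{very-weak-cokernel-pair-definition}, so that $me=lk_1$ and $d=lk_2$ --- and declare $k_1$ to be the desired split monomorphism. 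The factorization crosses the two legs of the cokernel pair: the morphism $t\rarrow k_1$ in $\sA^\to$ is taken to be $(t,k_2)$, which is legitimate precisely because $k_1t=k_2t$, and the morphism $k_1\rarrow m$ is taken to be $(e,l)$, which is legitimate because $me=lk_1$. A routine check of the composite then yields $(et,lk_2)=(c,d)$, the equality $lk_2=d$ being exactly the second defining property of the very weak cokernel pair.

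The hard part, and the only place where genuine work beyond this formal manipulation is needed, is that the objects $C$ and $D$ are not $\kappa$\+presentable, whereas the very weak cokernel pairs guaranteed to produce a \emph{$\kappa$\+presentable} object $K$ are those in the subcategory $\sA_{<\kappa}$ (Lemma~\ref{very-weak-cokernel-pairs-in-kappa-presentables}). Accordingly, before forming the cokernel pair I would descend all the relevant data to $\kappa$\+presentable objects. Writing $C$ and $D$ as $\kappa$\+directed colimits of $\kappa$\+presentable objects, and using that $S$ and $T$ are $\kappa$\+presentable, I would factor $c$ and $e$ through a single $\kappa$\+presentable $g\:C_0\rarrow C$ and factor $mg$ and $d$ through a single $\kappa$\+presentable $h\:D_0\rarrow D$; after passing to sufficiently large indices in the $\kappa$\+directed posets, the identities $et=c$ and $mc=dt$ can be arranged to hold already at the level of $C_0$ and $D_0$, that is, $e_0t=c_0$ and $n_0c_0=d_0t$ for the lifted morphisms $c_0$, $e_0$, $n_0$, $d_0$. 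One then forms the very weak cokernel pair of~$t$ with respect to the pair $(n_0e_0,d_0)\:T\rightrightarrows D_0$ inside $\sA_{<\kappa}$, obtaining a $\kappa$\+presentable object~$K$, and runs the crossed factorization above with the morphism $k_1\rarrow m$ now given by $(ge_0,hl)$. This bookkeeping --- tracking which equalities hold on the nose at each presentable stage and enlarging indices until they do --- is the main obstacle, but it is entirely routine given $\kappa$\+presentability of $S$ and $T$; the conceptual content is wholly contained in the choice of the pair $(me,d)$ and the crossed factorization of the preceding paragraph.
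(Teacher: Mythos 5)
Your proposal is correct and takes essentially the same route as the paper's proof: use purity to obtain the lifting $e$, form the parallel pair $(m\circ e,\,d)$ which agrees after precomposition with~$t$, approximate it by a parallel pair into a $\kappa$-presentable object, take a very weak cokernel pair of~$t$ with respect to that pair in $\sA_{<\kappa}$ (via Lemma~\ref{very-weak-cokernel-pairs-in-kappa-presentables}), and perform exactly the same crossed factorization $(t,k_2)$ followed by $(e,\cdot)$. The only cosmetic difference is that you also descend $C$ and the lifting $e$ to a $\kappa$-presentable stage $C_0$, whereas the paper keeps $e\colon T\rarrow C$ at the colimit level and only approximates the codomain data in $D$; both versions of the bookkeeping work.
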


\begin{proof}
 Let $m\:C\rarrow D$ be a $\kappa$\+pure monomorphism in $\sA$,
and let $t\:S\rarrow T$ be a morphism in $\sA_{<\kappa}$.
 Suppose we are given a morphism $t\rarrow m$ in~$\sA^\to$;
this means a commutative square diagram as
in~\eqref{pure-monomorphism-definition-diagram}.
 In view of Proposition~\ref{strongly-pure-monomorphisms-characterized},
we need to prove that the morphism $(c,d)\:t\rarrow m$ factorizes
through some split monomorphism $u\:U\rarrow V$ in $\sA_{<\kappa}$,
viewed as an object of $\sA^\to$.

 The following argument is a nonadditive version
of~\cite[proof of Lemma~4.3]{Plce}.
 By assumption, there exists a lifting $e\:T\rarrow C$ such that
$c=e\circ t$, as on the triangular diagram
in~\eqref{pure-monomorphism-definition-diagram}.
 Consider the parallel pair of morphisms $m\circ e$,
$d\:T\rightrightarrows D$.
 We have $m\circ e\circ t=m\circ c=d\circ t$.
 
 Let $D=\varinjlim_{\xi\in\Xi}W_\xi$ be a representation of $D$
as a $\kappa$\+directed colimit of $\kappa$\+presentable objects
$W_\xi$, indexed by a $\kappa$\+directed poset~$\Xi$.
 Denote by $w_{\eta\xi}\:W_\xi\rarrow W_\eta$ the transition
morphisms (for $\xi$, $\eta\in\Xi$, \,$\xi\le\eta$) and
by $z_\xi\:W_\xi\rarrow D$ the canonical morphisms to the colimit.
 Since $T\in\sA_{<\kappa}$, there exists an index $\xi\in\Xi$
such that both the morphisms $m\circ e$ and $d\:T\rightrightarrows D$
factorize through the morphism $z_\xi\:W_\xi\rarrow D$.
 So we have a parallel pair of morphisms $b'_1$, $b'_2\:
T\rightrightarrows W_\xi$ such that $m\circ e=z_\xi\circ b'_1$
and $d=z_\xi\circ b'_2$.
 Now $z_\xi\circ b'_1\circ t=m\circ e\circ t=d\circ t=z_\xi\circ
b'_2\circ t$.
 Since $S\in\sA_{<\kappa}$, there exists an index $\eta\in\Xi$,
\,$\xi\le\eta$, such that $w_{\eta\xi}\circ b'_1\circ t=
w_{\eta\xi}\circ b'_2\circ t$.
 Put $b_i=w_{\eta\xi}\circ b'_i$, \,$i=1$,~$2$, and $W=W_\eta$.
 We have constructed a parallel pair of morphisms $b_1$, $b_2\:
T\rightrightarrows W$ such that $m\circ e=z_\eta\circ b_1$ and
$d=z_\eta\circ b_2$, where $W\in\sA_{<\kappa}$ and
$z_\eta\:W\rarrow D$.
 Furthermore, we have $b_1\circ t=b_2\circ t$.
 So we obtain a commutative diagram
$$
 \xymatrix{
  C \ar[r]^m & D \\
  T \ar[u]^e \ar[r]^{b_1} & W \ar[u]_{z_\eta} \\
  S \ar[u]^t \ar[r]^t \ar@/^2pc/[uu]^c
  & T\ar[u]_{b_2}\ar@/_2.5pc/[uu]_d
 }
$$

 Finally, by assumption, very weak cokernel pairs exist in $\sA$,
and by Lemma~\ref{very-weak-cokernel-pairs-in-kappa-presentables} it
follows that very weak cokernel pairs exist in $\sA_{<\kappa}$ as well.
 Let $k_1$, $k_2\:T\rightrightarrows V$ be a very weak cokernel pair of
the morphism $t\:S\rarrow T$ with respect to the parallel pair
of morphisms $b_1$, $b_2\:T\rightrightarrows W$ in
the category~$\sA_{<\kappa}$.
 Let $l\:V\rarrow W$ and $s\:V\rarrow T$ be the related morphisms,
as per Definition~\ref{very-weak-cokernel-pair-definition}.
 Put $u=k_1$ and $v=k_2$, and also $z=z_\eta l$ and $U=T$.
 So, in particular, $u$~is a split monomorphism, $s\circ u=\id_T$.
 We have arrived to a commutative diagram
$$
 \xymatrix{
  C \ar[r]^m & D \\
  T \ar[u]^e \ar@{>->}[r]^u & V \ar[u]_z \\
  S \ar[u]^t \ar[r]^t \ar@/^2pc/[uu]^c
  & T\ar[u]_v \ar@/_2pc/[uu]_d
 }
$$
providing the desired factorization $t\rarrow u\rarrow m$,
in the category $\sA^\to$, of the morphism $(c,d)\:t\rarrow m$
through a split monomorphism of $\kappa$\+presentable objects
$u\:U=T\rarrow V$ in the category~$\sA$.
\end{proof}

\begin{cor} \label{under-finite-products-corollary}
 In any $\kappa$\+accessible category\/ $\sA$ with finite products,
the classes of $\kappa$\+pure monomorphisms and strongly
$\kappa$\+pure monomorphisms coincide.
 In other words, all $\kappa$\+pure monomorphisms in\/ $\sA$ are
$\kappa$\+directed colimits of split monomorphisms between
$\kappa$\+presentable objects of\/~$\sA$.
\end{cor}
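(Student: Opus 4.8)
The plan is to reduce the statement entirely to Theorem~\ref{under-very-weak-cokernel-pairs-theorem} by observing that the hypothesis of finite products is a special case of the hypothesis of that theorem. Concretely, I would verify that every category with finite products has very weak cokernel pairs, and then invoke the theorem verbatim. The verification is exactly the content recorded in Example~\ref{very-weak-cokernel-pair-examples}(2) and restated in Remark~\ref{flat-modules-counterex-remark}, so no new argument is needed.

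First I would recall the construction. Given three morphisms $f\:A\rarrow B$ and $c_1$, $c_2\:B\rightrightarrows C$ in $\sA$ with $c_1\circ f=c_2\circ f$, the product $K=B\times C$ exists by the finite-products assumption, with projections $p_B\:K\rarrow B$ and $p_C\:K\rarrow C$. Taking $k_i\:B\rarrow K$ to be the unique morphisms with $p_B\circ k_i=\id_B$ and $p_C\circ k_i=c_i$, the three conditions of Definition~\ref{very-weak-cokernel-pair-definition} all hold: the equation $c_1\circ f=c_2\circ f$ forces $k_1\circ f=k_2\circ f$ by the uniqueness clause in the universal property of the product; the projection $l=p_C$ witnesses $c_i=l\circ k_i$; and the projection $s=p_B$ splits $k_1$ since $p_B\circ k_1=\id_B$. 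This shows that $\sA$ has very weak cokernel pairs.

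With that in hand, Theorem~\ref{under-very-weak-cokernel-pairs-theorem} applies directly to the $\kappa$\+accessible category $\sA$ and yields the coincidence of the classes of $\kappa$\+pure and strongly $\kappa$\+pure monomorphisms, which is precisely the assertion. Since all the difficult content (factoring an arbitrary morphism into a $\kappa$\+pure monomorphism through a split monomorphism of $\kappa$\+presentable objects) is absorbed into the theorem, I do not expect any genuine obstacle in the corollary itself; the only point to check is that finite products suffice to produce very weak cokernel pairs, and this is immediate from the product construction above.
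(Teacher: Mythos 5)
Your proposal is correct and is exactly the paper's own argument: the corollary is deduced as a special case of Theorem~\ref{under-very-weak-cokernel-pairs-theorem} via Example~\ref{very-weak-cokernel-pair-examples}(2), whose product construction you reproduce accurately. No gaps.
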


\begin{proof}
 This is a particular case of
Theorem~\ref{under-very-weak-cokernel-pairs-theorem},
in view of Example~\ref{very-weak-cokernel-pair-examples}(2).
\end{proof}

\Section{Very Weak Split Pullbacks}
\label{very-weak-split-pullbacks-secn}

 Let $\sC$ be a category.
 Dually to the discussion of weak colimits in
Section~\ref{very-weak-cokernel-pairs-secn}, one defines the notion
of a \emph{weak limit} of a diagram in~$\sC$.

 A diagram of the shape
\begin{equation} \label{cospan-diagram}
\begin{gathered}
 \xymatrix{
  B \ar[r]^f & A \\
  & C \ar[u]_g
 }
\end{gathered}
\end{equation}
is called a \emph{cospan}.
 The limits of cospans are called the \emph{pullbacks}, and accordingly
weak limits of cospans are called weak pullbacks.

 We will say that a cospan~\eqref{cospan-diagram} is \emph{split} if
a morphism $h\:C\rarrow B$ exists making the triangular diagram
\begin{equation} \label{split-cospan-diagram}
\begin{gathered}
 \xymatrix{
  B \ar[r]^f & A \\
  & C \ar[u]_g \ar@{..>}[lu]^h
 }
\end{gathered}
\end{equation}
commutative.
 We will say that \emph{weak split pullbacks exist} in a category $\sC$
if all split cospans have weak pullbacks.

 Let $P$ be a weak pullback of a split
cospan~\eqref{split-cospan-diagram}, and let $p_B\:P\rarrow B$ and
$p_C\:P\rarrow C$ be the canonical morphisms from the weak limit.
 Then it is clear from the definitions that there exists a morphism
$s\:C\rarrow P$ such that $p_B\circ s=h$ and $p_C\circ s=\id_C$.
 Therefore, the morphism~$p_C$ is a split epimorphism.

\begin{defn} \label{very-weak-split-pullback-definition}
 Suppose we are given a split cospan
diagram~\eqref{split-cospan-diagram}
in $\sC$ together with a commutative square as on the diagram
\begin{equation} \label{very-weak-split-pullback-setting-posing-diags}
\begin{gathered}
 \xymatrix{
  B \ar[r]^f & A \\
  & C \ar[u]_g \ar[lu]^h
 }
 \qquad\qquad
 \xymatrix{
  B \ar[r]^f & A \\
  Q \ar[u]^{q_B} \ar[r]_{q_C} & C \ar[u]_g
 }
\end{gathered}
\end{equation}
 We will say that an object $P\in\sC$ together with a pair of
morphisms $p_B\:P\rarrow B$ and $p_C\:P\rarrow C$ is a \emph{very
weak split pullback} of~$(f,g)$ \emph{with respect to~$(q_B,q_C)$}
if the following three conditions hold:
\begin{itemize}
\item one has $f\circ p_B=g\circ p_C$;
\item there exists a morphism $r\:Q\rarrow P$ such that
$q_B=p_B\circ r$ and $q_C=p_C\circ r$;
\item the morphism $p_C\:P\rarrow C$ is a split epimorphism (i.~e.,
there exists a morphism $s\:C\rarrow P$ such that $p_C\circ s=\id_C$).
\end{itemize}
\end{defn}

 The commutative diagram described in
Definition~\ref{very-weak-split-pullback-definition} can be drawn as
\begin{equation} \label{very-weak-split-pullback-diagram}
\begin{gathered}
 \xymatrix{
  B\ar[rr]^f && A \\
  & P \ar@{..>}[lu]_{p_B} \ar@{..>>}[rd]^{p_C} \\
  Q\ar[uu]^{q_B} \ar[rr]_{q_C} \ar@{..>}[ru]^r
  && C \ar[uu]_g
 }
\end{gathered}
\end{equation}
 Here the splitting~$s$ of the split epimorphism~$p_C$ is not depicted
on the diagram~\eqref{very-weak-split-pullback-diagram}; instead,
the condition that~$p_C$ is a split epimorphism is expressed by
the double head at the end of the dotted arrow showing~$p_C$.

\begin{rem}
 Dually to cospans, a diagram of the shape~\eqref{span-diagram} is
called a \emph{span}.
 Notice that any span consisting of two equal morphisms, $B=C$ and
$f=g$, is a split span (in the sense dual to our definition of
a split cospan in this section).
 For this reason, using the terminology \emph{very weak split pushouts}
for the notion dual to very weak split pullbacks, one can observe that
the very weak cokernel pairs from
Definition~\ref{very-weak-cokernel-pair-definition} are a special case
of very weak split pushouts.
\end{rem}

\begin{exs} \label{very-weak-split-pullback-examples}
 (1)~If a split cospan $f\:B\rarrow A$, \ $g\:C\rarrow A$ has a weak
pullback $p_B\:P\rarrow B$, \ $p_C\:P\rarrow C$, then $(p_B,p_C)$ is
a very weak split pullback of $(f,g)$ with respect to any pair of
morphisms $(q_B,q_C)$ such that $f\circ q_B=g\circ q_C$.
 This is clear from the discussion above.
 In this sense, our terminology is consistent.
 
\smallskip
 (2)~Suppose we are given a commutative triangle and a commutative
square as in~\eqref{very-weak-split-pullback-setting-posing-diags}.
 Assume that the coproduct $P=Q\sqcup C$ exists in~$\sC$,
and denote by $i_Q\:Q\rarrow P$ and $i_C\:C\rarrow P$ the coproduct
injections.
 Let $p_B\:P\rarrow B$ be the morphism for which $p_B\circ i_Q=q_B$
and $p_B\circ i_C=h$, and let $p_C\:P\rarrow C$ be the morphism
for which $p_C\circ i_Q=q_C$ and $p_C\circ i_C=\id_C$.
 Then $(p_B,p_C)$ is a very weak split pullback of $(f,g)$ with
respect to~$(q_B,q_C)$.
 Indeed, the equations $f\circ q_B=g\circ q_C$ and $f\circ h=g$
imply $f\circ p_B=g\circ p_C$ by the uniqueness condition in
the universality property of the coproduct.
 In the notation of
Definition~\ref{very-weak-split-pullback-definition},
it remains to put $r=i_Q$ and $s=i_C$.
\end{exs}

 We will say that a category $\sC$ \emph{has very weak split pullbacks}
if for every commutative triangle and commutative square as
in~\eqref{very-weak-split-pullback-setting-posing-diags} there exists
a very weak split pullback of $(f,g)$ with respect to~$(q_B,q_C)$
in~$\sC$.

\begin{rem}
 By Example~\ref{very-weak-split-pullback-examples}(2), any category
with finite coproducts has very weak split pullbacks.
 In particular, any additive category has very weak split pullbacks.

 Notice, however, that an accessible additive category \emph{need not}
have weak split pullbacks in general.
 For example, dually to Remark~\ref{flat-modules-counterex-remark},
let $R$ be an associative ring, and consider the additive category
of injective left $R$\+modules $\sA=R\Modl_\inj$.
 It is well known that the category $\sA$ is accessible.
 In fact, $\sA$ is $\kappa$\+accessible whenever $\lambda$~is a regular
cardinal such every left ideal in $R$ has less than~$\lambda$
generators, $\nu$~is any infinite cardinal greater than or equal to
the cardinality of $R$ such that the set $\nu^{<\lambda}$ of all subsets
of~$\nu$ of the cardinality smaller than~$\lambda$ has cardinality equal
to~$\nu$, that is $\nu^{<\lambda}=\nu$, and $\kappa=\nu^+$ is
the successor cardinal of~$\nu$ (see, e.~g.,
\cite[Corollary~3.7]{Pres}).

 Given an arbitrary left $R$\+module $M$, pick a morphism of injective
left $R$\+modules $f\:B\rarrow A$ such that $M$ is the kernel of~$f$
in the abelian category $R\Modl$.
 Denote by $m\:M\rarrow B$ the natural monomorphism in $R\Modl$.
 Put $C=0$, and let $g\:C\rarrow A$ be the zero morphism.
 Then $(f,g)$ is obviously a split cospan in~$\sA$.
 Let $p_B\:P\rarrow B$ and $p_C\:P\rarrow C$ be a weak pullback of
$(f,g)$ in~$\sA$.
 Then we have $f\circ p_B=0$, hence there exists a morphism
$c\:P\rarrow M$ in $R\Modl$ such that $p_B=m\circ c$.
 We claim that the morphism~$c$ is an injective precover of $M$,
in the sense of~\cite{En}.

 Indeed, let $k\:Q\rarrow M$ be a morphism into $M$ from an injective
left $R$\+module~$Q$.
 Consider the pair of morphisms $q_B=m\circ k\:Q\rarrow B$ and
$q_C=0\:Q\rarrow C$.
 Then we have $f\circ q_B=f\circ m\circ k=0=g\circ q_C$.
 By assumption, there exists a morphism $r\:Q\rarrow P$ such that
$q_B=p_B\circ r$.
 Hence $m\circ k=q_B=p_B\circ r=m\circ c\circ r$.
 As the morphism~$m$ is a monomorphism in $R\Modl$, it follows that
$k=c\circ r$.
 Thus the morphism~$k$ factorizes through~$c$, as desired.

 Conversely, if injective precovers exist in $R\Modl$, then all
weak limits exist in $\sA=R\Modl_\inj$.
 Indeed, given a diagram $F\:D\rarrow\sA$, denote by $M$ the limit
of $F$ in $R\Modl$.
 Then any injective precover of $M$ is a weak limit of $F$ in~$\sA$.
 
 We have shown that weak split pullbacks exist in $\sA$ if and only if
injective precovers exist in $R\Modl$.
 The latter property holds if and only if the ring $R$ is left
Noetherian~\cite[Propositions~2.1 and~2.2]{En}.
 Taking a ring $R$ that is \emph{not} left Noetherian, we obtain
an example of an accessible additive category $\sA$ without weak
split pullbacks.
\end{rem}

\begin{ex} \label{no-very-weak-split-pullback-pair-example}
 Dually to Example~\ref{no-very-weak-cokernel-pair-example},
the preadditive category $\sA$ of $\k$\+vector spaces of dimension
not exceeding~$n$ (where $n\ge1$ is a fixed integer) does \emph{not}
have very weak split pullbacks.
 Specifically, put $A=0$, $B=\k^j$ for some $0<j\le n$, \ $C=\k^n$,
and $Q=\k^i$ for some $0<i\le n$.
 Let $f\:B\rarrow A$ and $g\:C\rarrow A$ be the zero morphisms,
and let $q_B\:Q\rarrow B$ and $q_C\:Q\rarrow C$ be any morphisms
such that $q_C=0$ and $q_B\ne0$.
 Then $f\circ q_B=g\circ q_C$, but the split cospan $(f,g)$ has
no very weak split pullback with respect to $(q_B,q_C)$ in~$\sA$.
 Indeed, if $p_C\:P\rarrow C$ is a split epimorphism in $\sA$,
then $p_C$ is an isomorphism, so the equation $0=q_C=p_C\circ r$ implies
$r=0$, which is incompatible with $0\ne q_B=p_B\circ r$.
\end{ex}

\begin{lem} \label{very-weak-split-pullbacks-in-kappa-presentables}
 Let $\kappa$~be a regular cardinal and\/ $\sA$ be
a $\kappa$\+accessible category with very weak split pullbacks.
 Then the full subcategory\/ $\sA_{<\kappa}\subset\sA$ of
$\kappa$\+presentable objects in\/ $\sA$ also has very weak split
pullbacks.
\end{lem}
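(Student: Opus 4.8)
The plan is to dualize the proof of Lemma~\ref{very-weak-cokernel-pairs-in-kappa-presentables}, while keeping in mind that $\kappa$\+accessibility is not a self-dual property, so the dualization cannot be literal: it is still $\kappa$\+directed colimits in $\sA$ (not in $\sA^\sop$) that must be exploited. Concretely, I would start from a split cospan $f\:B\rarrow A$, $g\:C\rarrow A$ with a splitting $h\:C\rarrow B$ (so $f\circ h=g$) together with a square $q_B\:Q\rarrow B$, $q_C\:Q\rarrow C$ satisfying $f\circ q_B=g\circ q_C$, all lying in $\sA_{<\kappa}$. Since $\sA_{<\kappa}\subset\sA$ and $\sA$ has very weak split pullbacks by hypothesis, I would first produce, in the category $\sA$, a very weak split pullback $x_B\:X\rarrow B$, $x_C\:X\rarrow C$ of $(f,g)$ with respect to $(q_B,q_C)$, with the attendant morphisms $r'\:Q\rarrow X$ (satisfying $q_B=x_B\circ r'$ and $q_C=x_C\circ r'$) and a splitting $s'\:C\rarrow X$ of $x_C$ (so $x_C\circ s'=\id_C$), as per Definition~\ref{very-weak-split-pullback-definition}.

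The next step is to approximate $X$ by a $\kappa$\+presentable object. Writing $X=\varinjlim_{\xi\in\Xi}K_\xi$ as a $\kappa$\+directed colimit of $\kappa$\+presentable objects, with transition morphisms $w_{\eta\xi}\:K_\xi\rarrow K_\eta$ and canonical morphisms $z_\xi\:K_\xi\rarrow X$, I would use that $Q$ and $C$ are $\kappa$\+presentable in order to factor the two morphisms \emph{into} $X$ through a single stage. That is, after passing to a common index $\xi\in\Xi$ (using $\kappa$\+directedness), I obtain morphisms $r_0\:Q\rarrow K_\xi$ and $s_0\:C\rarrow K_\xi$ with $z_\xi\circ r_0=r'$ and $z_\xi\circ s_0=s'$, these factorizations being exact and not merely valid up to a later transition map. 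I would then set $P=K_\xi$, $p_B=x_B\circ z_\xi$, $p_C=x_C\circ z_\xi$, $r=r_0$, and $s=s_0$; all of these are morphisms between $\kappa$\+presentable objects and hence lie in $\sA_{<\kappa}$.

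It then remains to verify the three conditions of Definition~\ref{very-weak-split-pullback-definition} in $\sA_{<\kappa}$, and the pleasant point here is that each of them holds exactly, by precomposing the corresponding identity in $\sA$ with $z_\xi$: the compatibility $f\circ p_B=g\circ p_C$ follows from $f\circ x_B=g\circ x_C$; the relations $q_B=p_B\circ r$ and $q_C=p_C\circ r$ follow from $q_B=x_B\circ r'$, $q_C=x_C\circ r'$ together with $z_\xi\circ r_0=r'$; and $p_C\circ s=\id_C$ follows from $x_C\circ s'=\id_C$ together with $z_\xi\circ s_0=s'$. The step I expect to require the most care is recognizing that, unlike in the very weak cokernel pair argument, \emph{no} passage to a higher index $\eta\ge\xi$ is needed here. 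The asymmetry is that the defining data of a very weak split pullback are the morphisms $p_B$, $p_C$ going \emph{out} of $P$, on which the only equation, $f\circ p_B=g\circ p_C$, is automatic after composing with $z_\xi$; whereas the morphisms that must be approximated, $r'$ and $s'$, go \emph{into} $X$ from $\kappa$\+presentable objects and therefore lift exactly through a single stage, so there is no stage-level equation among approximations that would force an equalization.
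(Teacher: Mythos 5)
Your proposal is correct and follows essentially the same argument as the paper's proof: obtain a very weak split pullback in $\sA$, factor the two incoming morphisms $r'$ and $s'$ through a single $\kappa$\+presentable stage of a $\kappa$\+directed colimit presentation of $X$, and compose the outgoing morphisms with the canonical map from that stage. Your closing observation that no passage to a higher index is needed (in contrast with Lemma~\ref{very-weak-cokernel-pairs-in-kappa-presentables}) is accurate and matches what the paper's proof implicitly does.
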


\begin{proof}
 Suppose we are given a commutative triangle and a commutative
square~\eqref{very-weak-split-pullback-setting-posing-diags}
in the category~$\sA_{<\kappa}$.
 Let $x_B\:X\rarrow B$ and $x_C\:X\rarrow C$ be a very weak split
pullback of $(f,g)$ with respect to $(q_B,q_C)$ in the category~$\sA$,
and let $y\:Q\rarrow X$ and $t\:C\rarrow X$ be the related morphisms.
 Let $X=\varinjlim_{\xi\in\Xi}P_\xi$ be a representation of $X$ as
a $\kappa$\+directed colimit of $\kappa$\+presentable objects $P_\xi$
in~$\sA$, indexed by a $\kappa$\+directed poset~$\Xi$.
 Denote by $z_\xi\:P_\xi\rarrow X$ the canonical morphisms into
the colimit.

 Since $\Xi$ is $\kappa$\+directed and $Q$ and $C$ are
$\kappa$\+presentable, there exists an index $\xi\in\Xi$ such that
both the morphisms $y$ and~$t$ factorize through the morphism~$z_\xi$.
 So we have morphisms $r\:Q\rarrow P_\xi$ and $s\:C\rarrow P_\xi$
such that $y=z_\xi\circ r$ and $t=z_\xi\circ s$.
 Put $p_B=x_B\circ z_\xi$ and $p_C=x_C\circ z_\xi$, and $P=P_\xi$.

 Then $p_B\:P\rarrow B$ and $p_C\:P\rarrow C$ is a very weak split
pullback of $(f,g)$ with respect to $(q_B,q_C)$ in
the category~$\sA_{<\kappa}$.
 Indeed, we have $f\circ p_B=f\circ x_B\circ z_\xi=g\circ x_C\circ z_\xi
=g\circ p_C$.
 Furthermore, $p_B\circ r=x_B\circ z_\xi\circ r=x_B\circ y=q_B$
and $p_C\circ r=x_C\circ z_\xi\circ r=x_C\circ y=q_C$.
 Finally, $p_C\circ s=x_C\circ z_\xi\circ s=x_C\circ t=\id_C$.
\end{proof}

\Section{Strongly Pure Epimorphisms}
\label{strongly-pure-epimorphisms-secn}

 Let $\kappa$~be a regular cardinal and $\sA$ be a $\kappa$\+accessible
category.
 A morphism $p\:D\rarrow E$ is said to be a \emph{$\kappa$\+pure
epimorphism}~\cite[Definition~1]{AR2} in $\sA$ if for every
$\kappa$\+presentable object $S$ and any morphism $e\:S\rarrow E$
in $\sA$ there exists a morphism $l\:S\rarrow D$ making the triangular
diagram
\begin{equation} \label{pure-epimorphism-definition-diagram}
\begin{gathered}
 \xymatrix{
  D \ar[r]^p & E \\
  & S \ar[u]_e \ar@{..>}[lu]^l
 }
\end{gathered}
\end{equation}
commutative.

 We refer to Section~\ref{strongly-pure-monomorphisms-secn} for
the notation $\sC^\to=\sC^I$ for a category~$\sC$.

\begin{lem} \label{pure-epimorphisms-simple-properties}
\textup{(a)} All split epimorphisms in\/ $\sA$ are $\kappa$\+pure
epimorphisms. \par
\textup{(b)} All $\kappa$\+pure epimorphisms are epimorphisms
in\/~$\sA$. \par
\textup{(c)} The class of $\kappa$\+pure epimorphisms is closed under
compositions of morphisms in\/~$\sA$. \par
\textup{(d)} If $p$, $q$ is a composable pair of morphisms in\/ $\sA$
and $p\circ q$ is a $\kappa$\+pure epimorphism, then $p$~is
a $\kappa$\+pure epimorphism. \par
\textup{(e)} The class of $\kappa$\+pure epimorphisms in\/ $\sA$
is closed under $\kappa$\+directed colimits in\/~$\sA^\to$.
\end{lem}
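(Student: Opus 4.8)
The plan is to derive all five parts directly from the defining lifting property~\eqref{pure-epimorphism-definition-diagram}, in a manner exactly dual to Lemma~\ref{pure-monomorphisms-simple-properties}. Parts~(a), (c), and~(d) are formal diagram chases requiring nothing beyond the definition; part~(b) invokes the fact that the $\kappa$\+presentable objects form a strong generator; and part~(e) combines $\kappa$\+presentability with the directedness of the colimit. Throughout I would write the morphism in question as $p\:D\rarrow E$ and test the lifting property against an arbitrary $\kappa$\+presentable object~$S$ equipped with a morphism $e\:S\rarrow E$.

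For~(a), if $p$ is split by some $r\:E\rarrow D$ with $p\circ r=\id_E$, then $l=r\circ e$ satisfies $p\circ l=e$, so $p$ is a $\kappa$\+pure epimorphism. For~(c), given composable $\kappa$\+pure epimorphisms $p\:D\rarrow E$ and $p'\:E\rarrow F$ and a morphism $e\:S\rarrow F$, I would first lift $e$ along $p'$ to some $l'\:S\rarrow E$, then lift $l'$ along $p$ to some $l\:S\rarrow D$, obtaining $(p'\circ p)\circ l=e$. For~(d), writing the composable pair as $q\:C\rarrow D$ and $p\:D\rarrow E$ with $p\circ q$ a $\kappa$\+pure epimorphism, I would lift $e\:S\rarrow E$ along $p\circ q$ to some $l\:S\rarrow C$ and observe that $q\circ l\:S\rarrow D$ is the required lift of~$e$ along~$p$.

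For~(b), to see that a $\kappa$\+pure epimorphism $p\:D\rarrow E$ is an epimorphism, I would suppose $f\circ p=g\circ p$ for a parallel pair $f$, $g\:E\rightrightarrows F$. For every morphism $e\:S\rarrow E$ out of a $\kappa$\+presentable object~$S$, choosing a lift $l\:S\rarrow D$ with $p\circ l=e$ gives $f\circ e=f\circ p\circ l=g\circ p\circ l=g\circ e$. Since every object of~$\sA$, and in particular~$E$, is the colimit of the canonical $\kappa$\+directed diagram of morphisms into it from $\kappa$\+presentable objects (the fact underlying Lemma~\ref{presentables-strong-generator}), agreement of $f$ and~$g$ after precomposition with all such~$e$ forces $f=g$.

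For~(e), let $(p_\xi\:D_\xi\rarrow E_\xi)_{\xi\in\Xi}$ be a $\kappa$\+directed diagram of $\kappa$\+pure epimorphisms in $\sA^\to$ with colimit $p\:D\rarrow E$. Since colimits in $\sA^\to$ are computed termwise, one has $D=\varinjlim_\xi D_\xi$ and $E=\varinjlim_\xi E_\xi$ in~$\sA$, and each square relating $p_\xi$ to~$p$ commutes. Given $e\:S\rarrow E$ with $S$ $\kappa$\+presentable, $\kappa$\+presentability of~$S$ lets $e$ factor through some canonical morphism $E_\xi\rarrow E$ via a morphism $e_\xi\:S\rarrow E_\xi$; lifting $e_\xi$ along $p_\xi$ and composing that lift with the canonical morphism $D_\xi\rarrow D$ yields the required lift of~$e$ along~$p$, using commutativity of the colimit square. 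The main point to get right is precisely this part~(e): one must keep careful track of the commuting squares that constitute morphisms in $\sA^\to$ and use $\kappa$\+presentability to factor~$e$ through a single stage of the diagram (note that no compatibility of the chosen lifts across stages is needed, since only one lift for~$p$ is sought). The other four parts present no genuine obstacle.
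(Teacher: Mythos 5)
Your proof is correct and consists of exactly the standard elementary arguments that the paper itself declines to spell out, merely citing \cite[Example~2(a--b) and Proposition~4(a)]{AR2} with the remark that all the assertions are well-known and easy to prove. Each of your five diagram chases is sound (including the use of the canonical $\kappa$\+directed diagram of morphisms from $\kappa$\+presentable objects in part~(b) and the termwise computation of colimits in $\sA^\to$ in part~(e)), so nothing further is needed.
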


\begin{proof}
 All the assertions are well-known and easy to prove.
 Parts~(a) and~(e) are~\cite[Example~2(a\+-b)]{AR2}.
 Part~(b) is~\cite[Proposition~4(a)]{AR2}.
\end{proof}

 It follows from
Lemma~\ref{pure-epimorphisms-simple-properties}(a,e) that
$\kappa$\+directed colimits of split epimorphisms are
$\kappa$\+pure epimorphisms in~$\sA$.
 The aim of this section is to provide a mild sufficient condition
for the inverse implication.
 In fact, we will prove a little bit more.

 Let us say that a morphism $p\:D\rarrow E$ in $\sA$ is
a \emph{strongly $\kappa$\+pure epimorphism} if $p$~is
a $\kappa$\+directed colimit in $\sA^\to$ of split epimorphisms
between $\kappa$\+presentable objects in~$\sA$.

\begin{prop} \label{strongly-pure-epimorphisms-characterized}
 A morphism~$p$ in\/ $\sA$ is a strongly $\kappa$\+pure epimorphism
if and only if any morphism into~$p$ from a morphism in\/
$\sA_{<\kappa}$ factorizes through a split epimorphism in\/
$\sA_{<\kappa}$ in the category\/ $\sA^\to$.
 In other words, $p$~is a strongly $\kappa$\+pure epimorphism if and
only if, for any morphism~$t$ in\/ $\sA_{<\kappa}$ and any morphism
$t\rarrow p$ in $\sA^\to$ there exists a split epimorphism~$s$
in\/ $\sA_{<\kappa}$ such that the morphism $t\rarrow m$ factorizes
as $t\rarrow s\rarrow p$ for some morphisms $t\rarrow s$
and $s\rarrow p$ in\/~$\sA^\to$.
\end{prop}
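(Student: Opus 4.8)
The plan is to dualize the proof of Proposition~\ref{strongly-pure-monomorphisms-characterized} word for word, replacing split monomorphisms by split epimorphisms throughout. The entire argument there used only that $\sA^\to$ is a $\kappa$\+accessible category whose $\kappa$\+presentable objects are the morphisms between $\kappa$\+presentable objects of $\sA$, together with the general Proposition~\ref{accessible-subcategory}; none of this is sensitive to the distinction between monomorphisms and epimorphisms.

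First I would invoke Proposition~\ref{rigid-finite-diagram-category-accessible} with $D=I$ (or, equivalently, Proposition~\ref{comma-category-accessible} with $\sA=\sB=\sC$) to record that $\sA^\to$ is $\kappa$\+accessible and that $(\sA^\to)_{<\kappa}=(\sA_{<\kappa})^\to$; that is, the $\kappa$\+presentable objects of $\sA^\to$ are exactly the morphisms between $\kappa$\+presentable objects of $\sA$. In particular, every split epimorphism between $\kappa$\+presentable objects is a $\kappa$\+presentable object of $\sA^\to$. I would then let $\sT\subset(\sA^\to)_{<\kappa}$ be the set of representatives of the isomorphism classes of split epimorphisms in $\sA_{<\kappa}$, which is a genuine set since $\sA_{<\kappa}$ is essentially small. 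By the definition of a strongly $\kappa$\+pure epimorphism, the class of such morphisms is precisely $\varinjlim_{(\kappa)}\sT$ computed in $\sA^\to$.

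Finally, applying the last assertion of Proposition~\ref{accessible-subcategory} to the $\kappa$\+accessible category $\sA^\to$ and the set $\sT$ yields exactly the claim: a morphism $p$ lies in $\varinjlim_{(\kappa)}\sT$ if and only if, for every $\kappa$\+presentable object of $\sA^\to$ (that is, every morphism $t$ in $\sA_{<\kappa}$), every morphism $t\rarrow p$ in $\sA^\to$ factorizes through an object of $\sT$ (that is, through a split epimorphism in $\sA_{<\kappa}$). Since the proof is a purely formal dualization, I expect no genuine obstacle; the only point worth verifying is that $\sT$ is a set of $\kappa$\+presentable objects of $\sA^\to$, which is precisely the content of the identification $(\sA^\to)_{<\kappa}=(\sA_{<\kappa})^\to$ noted above.
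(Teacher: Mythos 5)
Your proposal is correct and coincides with the paper's intended argument: the paper's proof of this proposition simply says ``Similar to the proof of Proposition~\ref{strongly-pure-monomorphisms-characterized},'' and that proof is exactly the application of Proposition~\ref{rigid-finite-diagram-category-accessible} (for $D=I$) together with Proposition~\ref{accessible-subcategory} to the set of split mono/epimorphisms in $\sA_{<\kappa}$ that you describe. No differences worth noting.
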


\begin{proof}
 Similar to the proof of
Proposition~\ref{strongly-pure-monomorphisms-characterized}.
\end{proof}

\begin{lem} \label{strongly-pure-epis-directed-colimit-closed}
 The class of strongly $\kappa$\+pure epimorphisms in\/ $\sA$ is
closed under $\kappa$\+directed colimits in\/ $\sA^\to$.
\end{lem}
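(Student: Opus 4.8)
The plan is to recognize this as an immediate consequence of Proposition~\ref{accessible-subcategory}, exactly paralleling the argument for Lemma~\ref{strongly-pure-monos-directed-colimit-closed} in the monomorphism case.
 The key observation, already recorded in the proof of
Proposition~\ref{strongly-pure-epimorphisms-characterized} (which
mirrors Proposition~\ref{strongly-pure-monomorphisms-characterized}),
is that the arrow category $\sA^\to$ is itself $\kappa$\+accessible,
with $(\sA^\to)_{<\kappa}=(\sA_{<\kappa})^\to$.
 This is what makes the machinery of
Proposition~\ref{accessible-subcategory} available in~$\sA^\to$.

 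First I would fix the set $\sT\subset(\sA^\to)_{<\kappa}$ consisting
of (representatives of the isomorphism classes of) all split
epimorphisms between $\kappa$\+presentable objects of $\sA$, viewed as
objects of~$\sA^\to$.
 By the very definition of a strongly $\kappa$\+pure epimorphism, the
full subcategory of strongly $\kappa$\+pure epimorphisms in $\sA^\to$
is precisely $\varinjlim_{(\kappa)}\sT$, the class of all
$\kappa$\+directed colimits of objects from $\sT$ taken in~$\sA^\to$.

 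Then I would invoke Proposition~\ref{accessible-subcategory} applied
to the $\kappa$\+accessible category $\sA^\to$ and this set~$\sT$.
 That proposition asserts exactly that the full subcategory
$\varinjlim_{(\kappa)}\sT\subset\sA^\to$ is closed under
$\kappa$\+directed colimits in $\sA^\to$, which is the desired
statement.

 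There is no genuine obstacle here: once the $\kappa$\+accessibility
of $\sA^\to$ and the identification of its $\kappa$\+presentable
objects are in hand, the closure under $\kappa$\+directed colimits is
a formal feature of the subcategories $\varinjlim_{(\kappa)}\sT$
produced by Proposition~\ref{accessible-subcategory}.
 The only point deserving any attention is to confirm that $\sT$ is
indeed a \emph{set} rather than a proper class, which holds because
$\sA^\to$ is $\kappa$\+accessible and hence its $\kappa$\+presentable
objects form essentially a set.
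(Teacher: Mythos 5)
Your proposal is correct and follows exactly the paper's own route: the paper proves this lemma by reference to Lemma~\ref{strongly-pure-monos-directed-colimit-closed}, which in turn is just the closure-under-$\kappa$-directed-colimits assertion of Proposition~\ref{accessible-subcategory} applied to the $\kappa$-accessible category $\sA^\to$ and the set of split epimorphisms in $(\sA^\to)_{<\kappa}=(\sA_{<\kappa})^\to$. Nothing is missing.
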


\begin{proof}
 Similar to the proof of
Lemma~\ref{strongly-pure-monos-directed-colimit-closed}.
\end{proof}

\begin{thm} \label{under-very-weak-split-pullbacks-theorem}
 In any $\kappa$\+accessible category\/ $\sA$ with very weak split
pullbacks, the classes of $\kappa$\+pure epimorphisms and strongly
$\kappa$\+pure epimorphisms coincide.
 In other words, all $\kappa$\+pure epimorphisms in\/ $\sA$ are
$\kappa$\+directed colimits of split epimorphisms between
$\kappa$\+presentable objects of\/~$\sA$.
\end{thm}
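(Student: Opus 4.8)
The plan is to follow the same strategy as in the proof of Theorem~\ref{under-very-weak-cokernel-pairs-theorem}, but with two modifications forced by the lack of a formal duality (the opposite of an accessible category need not be accessible): I replace very weak cokernel pairs by very weak split pullbacks, and I represent the morphism $p$ \emph{itself} as a $\kappa$\+directed colimit in $\sA^\to$, rather than representing a single object. Let $p\:D\rarrow E$ be a $\kappa$\+pure epimorphism, let $t\:S\rarrow T$ be a morphism in $\sA_{<\kappa}$, and suppose given a morphism $(c,d)\:t\rarrow p$ in $\sA^\to$, i.e., a commutative square with $c\:S\rarrow D$, \ $d\:T\rarrow E$, and $p\circ c=d\circ t$. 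By Proposition~\ref{strongly-pure-epimorphisms-characterized}, it suffices to factorize $(c,d)$ through a split epimorphism between $\kappa$\+presentable objects in the category $\sA^\to$. Since $T\in\sA_{<\kappa}$, the defining property~\eqref{pure-epimorphism-definition-diagram} of a $\kappa$\+pure epimorphism applied to the morphism $d\:T\rarrow E$ provides a lifting $l\:T\rarrow D$ with $p\circ l=d$.

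First I would descend all the data to the $\kappa$\+presentable level. By Proposition~\ref{rigid-finite-diagram-category-accessible} (with $D=I$), the category $\sA^\to$ is $\kappa$\+accessible and $(\sA^\to)_{<\kappa}=(\sA_{<\kappa})^\to$, so I may write $p=\varinjlim_{\xi\in\Xi}p_\xi$ as a $\kappa$\+directed colimit of morphisms $p_\xi\:D_\xi\rarrow E_\xi$ between $\kappa$\+presentable objects; computing this colimit pointwise gives $D=\varinjlim_\xi D_\xi$ and $E=\varinjlim_\xi E_\xi$ in $\sA$, with canonical morphisms $z^D_\xi$, $z^E_\xi$ satisfying $p\circ z^D_\xi=z^E_\xi\circ p_\xi$. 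As $t$ is $\kappa$\+presentable in $\sA^\to$, the morphism $(c,d)\:t\rarrow p$ factorizes through some $p_\xi$; enlarging $\xi$ (using that $S$ and $T$ are $\kappa$\+presentable) I may assume in addition that $l=z^D_\xi\circ l'$ for some $l'\:T\rarrow D_\xi$. Passing to a larger index $\eta\ge\xi$, I obtain morphisms $c''\:S\rarrow D_\eta$, \ $d''\:T\rarrow E_\eta$, \ $l''\:T\rarrow D_\eta$ in $\sA_{<\kappa}$ with $z^D_\eta\circ c''=c$, \ $z^E_\eta\circ d''=d$, \ $z^D_\eta\circ l''=l$, and satisfying $p_\eta\circ c''=d''\circ t$ together with $p_\eta\circ l''=d''$. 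The last equation is obtained by promoting the colimit-level identity $z^E_\xi\circ p_\xi\circ l'=z^E_\xi\circ d'$ (which follows from $p\circ l=d$ and $p\circ z^D_\xi=z^E_\xi\circ p_\xi$) into an honest equation at the stage $\eta$, using once more that $T$ is $\kappa$\+presentable.

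Now the pair $(p_\eta,d'')$ is a split cospan $D_\eta\overset{p_\eta}{\rarrow}E_\eta\overset{d''}{\larrow}T$ in $\sA_{<\kappa}$, split by $l''$ since $p_\eta\circ l''=d''$, and $(c'',t)$ is a commuting square over it since $p_\eta\circ c''=d''\circ t$. By Lemma~\ref{very-weak-split-pullbacks-in-kappa-presentables}, the category $\sA_{<\kappa}$ has very weak split pullbacks, so I may form a very weak split pullback $(p_B\:P\rarrow D_\eta,\ p_C\:P\rarrow T)$ of $(p_\eta,d'')$ with respect to $(c'',t)$, as in Definition~\ref{very-weak-split-pullback-definition}. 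This yields an object $P\in\sA_{<\kappa}$, a morphism $r\:S\rarrow P$ with $p_B\circ r=c''$ and $p_C\circ r=t$, the relation $p_\eta\circ p_B=d''\circ p_C$, and the crucial fact that $p_C\:P\rarrow T$ is a split epimorphism. This $p_C$ is the desired split epimorphism between $\kappa$\+presentable objects.

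Finally I would assemble the factorization $t\rarrow p_C\rarrow p$ in $\sA^\to$. With $p_C\:P\rarrow T$ as the middle object, the morphism $t\rarrow p_C$ is given by $(r,\id_T)$, which commutes because $p_C\circ r=t=\id_T\circ t$, and the morphism $p_C\rarrow p$ is given by $(z^D_\eta\circ p_B,\,d)$, whose square commutes because $p\circ z^D_\eta\circ p_B=z^E_\eta\circ p_\eta\circ p_B=z^E_\eta\circ d''\circ p_C=d\circ p_C$. The composite recovers $(c,d)$, since on sources $z^D_\eta\circ p_B\circ r=z^D_\eta\circ c''=c$ and on targets $d\circ\id_T=d$. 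Hence $(c,d)\:t\rarrow p$ factorizes through the split epimorphism $p_C$ in $\sA_{<\kappa}$, and Proposition~\ref{strongly-pure-epimorphisms-characterized} finishes the argument. I expect the main obstacle to be precisely the bookkeeping of the descent step in the second paragraph: one must factor the square $(c,d)$ and the lifting $l$ through a single stage $\eta$ and, most delicately, turn the colimit-level identity $p\circ l=d$ into the stage-level equation $p_\eta\circ l''=d''$, so that the descended cospan is \emph{genuinely} split already in $\sA_{<\kappa}$. Once this is arranged, everything else is the formal diagram chase just indicated.
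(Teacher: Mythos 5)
Your proposal is correct and follows essentially the same route as the paper's own proof: lift along the pure epimorphism to obtain a split cospan, descend the square and the splitting to a single $\kappa$\+presentable stage of a $\kappa$\+directed colimit presentation of~$p$ in $\sA^\to$, and then apply Lemma~\ref{very-weak-split-pullbacks-in-kappa-presentables} to produce the split epimorphism through which the square factors. The only cosmetic difference is that you transport the domain object to the later stage~$\eta$ as well, whereas the paper keeps it at stage~$\xi$ and only composes with the transition morphism on the codomain side.
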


\begin{proof}
 Let $p\:D\rarrow E$ be a $\kappa$\+pure epimorphism in $\sA$, and
let $t\:T\rarrow S$ be a morphism in~$\sA_{<\kappa}$.
 Suppose we are given a morphism $t\rarrow p$ in $\sA^\to$;
this means a commutative square diagram
\begin{equation} \label{into-pure-epimorphism-to-be-factorized-diagram}
\begin{gathered}
 \xymatrix{
  D \ar[r]^p & E \\
  T \ar[u]^d \ar[r]_t & S \ar[u]_e
 }
\end{gathered}
\end{equation}
 In view of Proposition~\ref{strongly-pure-epimorphisms-characterized},
we need to prove that the morphism $(d,e)\:t\rarrow p$ factorizes
through some split epimorphism $u\:U\rarrow V$ in $\sA_{<\kappa}$,
viewed as an object of~$\sA^\to$.

 The following argument is a nonadditive version
of~\cite[proofs of Lemmas~1.5, 2.3, and~4.1, and Proposition~4.2]{Plce}.
 By assumption, there exists a lifting $l\:S\rarrow D$ such that
$e=p\circ l$, as on diagram~\eqref{pure-epimorphism-definition-diagram}.
 In other words, this means that the pair of morphisms $(p,e)$ is
a split cospan in~$\sA$.

 Following the proof of
Proposition~\ref{strongly-pure-monomorphisms-characterized}
or~\cite[Exercise~2.c]{AR}, any morphism in $\sA$, viewed as an object
of $\sA^\to$, is a $\kappa$\+directed colimit of morphisms between
$\kappa$\+presentable objects.
 Let $p=\varinjlim_{\xi\in\Xi}w_\xi$ be a representation of the morphism
$p\:D\rarrow E$ as a $\kappa$\+directed colimit of morphisms
$w_\xi\:X_\xi\rarrow Y_\xi$, with $\kappa$\+presentable objects $X_\xi$
and $Y_\xi$, indexed by a $\kappa$\+directed poset~$\Xi$.
 Denote by $x'_{\eta\xi}\:X_\xi\rarrow X_\eta$ and $y'_{\eta\xi}\:
Y_\xi\rarrow Y_\eta$ the components of the transition morphisms
$(x'_{\eta\xi},y'_{\eta\xi})$, for all $\xi$, $\eta\in\Xi$,
\,$\xi\le\eta$.
 Denote also by $x_\xi\:X_\xi\rarrow D$ and $y_\xi\:Y_\xi\rarrow E$
the components of the canonical morphisms to the colimit
$(x_\xi,y_\xi)\:w_\xi\rarrow p$.

 Since $\Xi$ is $\kappa$\+directed and $T$ and $S$ are
$\kappa$\+presentable, there exists an index $\xi\in\Xi$ such that
both the morphisms $d$ and~$l$ factorize through~$x_\xi$, while
the morphism~$e$ factorizes through~$y_\xi$.
 So we have morphisms $t_X\:T\rarrow X_\xi$, \ $h\:S\rarrow X_\xi$,
and $g'\:S\rarrow Y_\xi$ such that $d=x_\xi\circ t_X$, \
$l=x_\xi\circ h$, and $e=y_\xi\circ g'$.
 Hence $y_\xi\circ w_\xi\circ t_X=p\circ x_\xi\circ t_X=p\circ d=
e\circ t=y_\xi\circ g'\circ t$ and $y_\xi\circ w_\xi\circ h=
p\circ x_\xi\circ h=p\circ l=e=y_\xi\circ g'$.
 Since $\Xi$ is $\kappa$\+directed and $T$ and $S$ are
$\kappa$\+presentable, there exists an index $\eta\in\Xi$,
\,$\xi\le\eta$, such that $y'_{\eta\xi}\circ w_\xi\circ t_X=
y'_{\eta\xi}\circ g'\circ t$ and $y'_{\eta\xi}\circ w_\xi\circ h
=y'_{\eta\xi}\circ g'$.

 Put $X=X_\xi$, \ $Y=Y_\eta$, \ $f=y'_{\eta\xi}\circ w_\xi\:X\rarrow Y$,
and $g=y'_{\eta\xi}\circ g'\:S\rarrow Y$.
 Then we have commutative diagrams in~$\sA$
$$
 \xymatrix{
  D \ar[r]^p & E \\
  X \ar[u]^{x_\xi} \ar[r]^f & Y \ar[u]_{y_\eta} \\
  T \ar[u]^{t_X} \ar[r]^t \ar@/^2.5pc/[uu]^d
  & S\ar[u]_g\ar@/_2.5pc/[uu]_e
 }
 \qquad\qquad
 \xymatrix{
  \ \\
  X \ar[r]^f & Y \\
  & S \ar[u]_g \ar[lu]^h
 }
$$
with objects $S$, $T$, $X$, $Y\in\sA_{<\kappa}$.
 Now the pair of morphisms $(f,g)$ is a split cospan in~$\sA_{<\kappa}$.
 For convenience, put $t_S=t$. 

 Finally, by assumption, very weak split pullbacks exist in $\sA$, and
by Lemma~\ref{very-weak-split-pullbacks-in-kappa-presentables} it
follows that very weak split pullbacks exist in $\sA_{<\kappa}$ as well.
 Let $u_X\:U\rarrow X$ and $u_S\:U\rarrow S$ be a very weak split
pullback of $(f,g)$ with respect to~$(t_X,t_S)$ in
the category~$\sA_{<\kappa}$.
 Let $r\:T\rarrow U$ and $s\:S\rarrow U$ be the related morphisms,
as per Definition~\ref{very-weak-split-pullback-definition}.
 So, in particular, $u_S$~is a split epimorphism, $u_S\circ s=\id_S$.
 We have arrived to a commutative diagram
$$
 \xymatrix{
  D \ar[r]^p & E \\
  X \ar[u]^{x_\xi} \ar[r]^f & Y \ar[u]_{y_\eta} \\
  U \ar[u]^{u_X} \ar@{->>}[r]^{u_S} & S \ar[u]_g \\
  T \ar[u]^r \ar[r]^{t=t_S} \ar@/^6pc/[uuu]^d \ar@/^3pc/[uu]^{t_X}
  & S\ar@{=}[u]\ar@/_3.5pc/[uuu]_e
 }
$$
providing the desired factorization $t\rarrow u\rarrow p$, in
the category $\sA^\to$, of the morphism $(d,e)\:t\rarrow p$ through
a split epimorphism of $\kappa$\+presentable objects $u=u_S\:U
\rarrow V=S$ in the category~$\sA$.
\end{proof}

\begin{cor} \label{under-finite-coproducts-corollary}
 In any $\kappa$\+accessible category\/ $\sA$ with finite coproducts,
the classes of $\kappa$\+pure epimorphisms and strongly
$\kappa$\+pure epimorphisms coincide.
 In other words, all $\kappa$\+pure epimorphisms in\/ $\sA$ are
$\kappa$\+directed colimits of split epimorphisms between
$\kappa$\+presentable objects of\/~$\sA$.
\end{cor}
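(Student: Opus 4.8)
The plan is to obtain this corollary as an immediate special case of Theorem~\ref{under-very-weak-split-pullbacks-theorem}, exactly dually to the way Corollary~\ref{under-finite-products-corollary} follows from Theorem~\ref{under-very-weak-cokernel-pairs-theorem}. The single point that needs to be checked is that the hypothesis of that theorem---the existence of very weak split pullbacks---holds automatically in any category with finite coproducts.

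For this I would invoke Example~\ref{very-weak-split-pullback-examples}(2), which already carries out the verification: given a split cospan together with a commutative square as in~\eqref{very-weak-split-pullback-setting-posing-diags}, one forms the coproduct $P=Q\sqcup C$ and defines the two projections $p_B$ and $p_C$ through the universal property of the coproduct, so that the three conditions of Definition~\ref{very-weak-split-pullback-definition} hold with $r=i_Q$ and $s=i_C$. Since this construction requires nothing beyond the existence of the binary coproduct $Q\sqcup C$, any $\kappa$\+accessible category\/ $\sA$ with finite coproducts has very weak split pullbacks.

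With that hypothesis established, Theorem~\ref{under-very-weak-split-pullbacks-theorem} applies directly and yields the coincidence of the classes of $\kappa$\+pure and strongly $\kappa$\+pure epimorphisms, which is precisely the assertion. I do not expect any genuine obstacle: the substantive content resides in the theorem and in Example~\ref{very-weak-split-pullback-examples}(2), and the present corollary is merely the specialization of that machinery to the finite-coproduct setting. The one bit of bookkeeping worth keeping an eye on is the orientation of the construction---that it is the coproduct $Q\sqcup C$ (rather than a product) which makes $p_C$ a \emph{split} epimorphism and simultaneously produces the factoring morphism $r$---but this is exactly what Example~\ref{very-weak-split-pullback-examples}(2), the dual of Example~\ref{very-weak-cokernel-pair-examples}(2), takes care of.
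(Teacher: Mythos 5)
Your proposal is correct and coincides with the paper's own proof: the corollary is deduced from Theorem~\ref{under-very-weak-split-pullbacks-theorem} via Example~\ref{very-weak-split-pullback-examples}(2), which shows that finite coproducts yield very weak split pullbacks. Nothing further is needed.
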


\begin{proof}
 This is a particular case of
Theorem~\ref{under-very-weak-split-pullbacks-theorem},
in view of Example~\ref{very-weak-split-pullback-examples}(2).
\end{proof}

\begin{rem}
 The additional assumptions in
Theorems~\ref{under-very-weak-cokernel-pairs-theorem}
and~\ref{under-very-weak-split-pullbacks-theorem} (on top of
the assumption that $\sA$ is a $\kappa$\+accessible category) may be
mild, but they \emph{cannot} be completely dropped.
 Indeed, the counterexample in~\cite[Example~2.5]{AR3} shows that
a pure (i.~e., $\aleph_0$\+pure) monomorphism in a finitely
accessible (i.~e., $\aleph_0$\+accessible) category $\sA$ need not
be a colimit of split monomorphisms in general.

 A similar construction provides an example showing that
a pure epimorphism in a finitely accessible category $\sA$ need not
be a directed colimit of split epimorphisms, generally speaking.
 In order to obtain the desired counterexample, it suffices to
modify~\cite[Example~2.5]{AR3} as follows.
 In the notation of~\cite[Example~2.5]{AR}, when freely adding
morphisms $e_i\:B_i\rarrow A_{i+1}$, instead of imposing
the relations $e_im_i=a_{i,i+1}$, impose the relations
$m_{i+1}e_i=b_{i,i+1}$ for all $i=0$, $1$, $2$,~\dots{}
 Then the morphism $m=\varinjlim_i m_i\:\varinjlim_i A_i
\rarrow\varinjlim_i B_i$ becomes a pure epimorphism that is not
a colimit of split epimorphisms (in fact, all split epimorphisms are
isomorphisms in the resulting finitely accessible category~$\sA$).
\end{rem}

\Section{QE-Mono Classes}  \label{QE-mono-classes-secn}

 Let $\sC$ be a category.
 A morphism $m\:C\rarrow D$ in $\sC$ is called a \emph{regular
monomorphism} if $m$~is the equalizer of a parallel pair of
morphisms $e_1$, $e_2\:D\rightrightarrows E$.

 Clearly, every regular monomorphism is a monomorphism.
 Every split monomorphism is regular: if $s\:D\rarrow C$ is
a morphism such that $s\circ m=\id_C$, then $m$~is the equalizer
of the pair of morphisms $m\circ s$ and $\id_D\:D\rightrightarrows D$.

 A morphism $m\:C\rarrow D$ is said to be an \emph{effective
monomorphism} if $m$~has a cokernel pair
$k_1$, $k_2\:D\rightrightarrows K$ in $\sC$ and $m$~is
the equalizer of $(k_1,k_2)$.
 One can easily see that if a regular monomorphism~$m$ has
a cokernel pair $(k_1,k_2)$, then $m$~is the equalizer of $(k_1,k_2)$.
 So a monomorphism is effective if and only if it is regular
\emph{and} has a cokernel pair.

\begin{lem} \label{effective-monomorphisms-lemma}
 For any $\kappa$\+accessible category\/ $\sA$, the class of
effective monomorphisms is closed under $\kappa$\+directed colimits
in\/~$\sA^\to$.
\end{lem}

\begin{proof}
 In any category, colimits commute with colimits; in particular,
the $\kappa$\+directed colimits preserve cokernel pairs.
 So, if a morphism~$m$ is a $\kappa$\+directed colimit of
effective monomorphisms~$m_\xi$, then the colimit of the cokernel
pairs of~$m_\xi$ is the cokernel pair of~$m$.
 Here we are assuming that the index~$\xi$ ranges over
a $\kappa$\+directed poset~$\Xi$.
 By Lemma~\ref{directed-colimits-commute-with-small-limits}, in
a $\kappa$\+accessible category, the $\kappa$\+directed colimits
commute with $\kappa$\+small limits; in particular,
the $\kappa$\+directed colimits preserve equalizers.
 Thus $m$~is the equalizer of its cokernel pair.
\end{proof}

 Let $\cM$ be a class of morphisms in a category~$\sC$.
 We will say that $\cM$ is a \emph{QE\+mono class} in $\sC$ if
the following conditions are satisfied:
\begin{enumerate}
\renewcommand{\theenumi}{\roman{enumi}}
\item All pushouts of all morphisms from $\cM$ exists in $\sC$,
and the class $\cM$ is stable under pushouts.
 In other words, for any span (i.~e., a pair of morphisms with common
domain) $m\:C\rarrow D$, \ $f\:C\rarrow C'$ such that $m\in\cM$,
the pushout $D'$ exists, and the morphism $m'\:C'\rarrow D'$
belongs to~$\cM$,
\begin{equation} \label{QE-mono-defn-pushout-diagram}
\begin{gathered}
 \xymatrix{
  C' \ar@{..>}[r]^{m'} & D' \\
  C \ar[u]^f \ar[r]_m & D \ar@{..>}[u]_{f'}
 }
\end{gathered}
\end{equation}
\item In particular, condition~(i) implies that all morphisms from
$\cM$ have cokernel pairs in~$\sC$.
 It is further required that every morphism from $\cM$ is the equalizer
of its cokernel pair.
 In other words, all the morphisms from $\cM$ must be effective
monomorphisms.
\item All the identity morphisms in $\sC$ belong to $\cM$, and the class
of morphisms $\cM$ is closed under compositions.
\end{enumerate}

 It is clear from the preceding discussion that, assuming condition~(i),
condition~(ii) is equivalent to the condition that all morphisms
from $\cM$ are regular monomorphisms in~$\sC$.

\begin{exs} \label{QE-mono-classes-examples}
 (1)~If $\sC$ is an additive category, then a QE\+mono class of
morphisms in $\sC$ is the same thing as a structure of \emph{right
exact category} on $\sC$ in the sense of~\cite[Definition~3.1]{BC}.
 In the terminology of~\cite[Definition~2.2]{HR}, such additive
categories with an additional structure are called
\emph{inflation-exact categories}.

\smallskip
 (2)~If the category $\sC^\sop$ opposite to $\sC$ is regular
(in the sense of~\cite{BGO,Gra}), then the class of all regular
monomorphisms in $\sC$ is a QE\+mono class.
 See Example~\ref{QE-epi-classes-examples}(2) below.

\smallskip
 (3)~The compositions of split monomorphisms are always split
monomorphisms, and all split monomorphisms are regular
(as explained above).
 Furthermore, if the morphism~$m$ on the pushout
diagram~\eqref{QE-mono-defn-pushout-diagram} is a split monomorphism
with a splitting $s\:D\rarrow C$, \ $s\circ m=\id_C$, then, by
the definition of a pushout, there exists a unique morphism 
$s'\:D'\rarrow C'$ such that $s'\circ m'=\id_{C'}$ and
$s'\circ f'=f\circ s$ (because $\id_{C'}\circ f=f=f\circ s\circ m$;
cf.\ the discussion of split pullbacks in
Section~\ref{very-weak-split-pullbacks-secn}).
 So all pushouts of split monomorphisms are split monomorphisms.

 Therefore, the class $\cM$ of all split monomorphisms in
a category $\sC$ is a QE\+mono class if and only all pushouts of
split monomorphisms exist in~$\sC$.
\end{exs}

\begin{rem} \label{pushouts-of-split-monomorphisms-remark}
 One can easily see that all pushouts of split monomorphisms exist
in an additive category if and only if the category is \emph{weakly 
idempotent-complete} (in the sense of~\cite[Section~7]{Bueh}).
 However, the following simple example shows that a \emph{preadditive}
category that is idempotent-complete (in the sense
of~\cite[Section~6]{Bueh}; or which is the same, has split idempotents
in the sense of~\cite[Observation~2.4]{AR}) still need not have
pushouts of split monomorphisms.

 Let $\k$ be a field, $n\ge1$ be an integer, $\k\Vect$ be the category
of $\k$\+vector spaces, and $\sA\subset\k\Vect $ be the full subcategory
of $\k$\+vector spaces of finite dimension not exceeding~$n$.
 For any nonnegative integer~$i$, let $\k^i$ denote the $\k$\+vector
space of dimension~$i$.
 Then the pair of split monomorphisms (actually, direct summand
injections) $b\:A=\k^{n-1}\rarrow\k^n=B$ and $c\:A=\k^{n-1}\rarrow
\k^n=C$ does \emph{not} have a pushout in~$\sA$.
 The pushout of $b$ and~$c$ in $\k\Vect$ is isomorphic to $\k^{n+1}$,
which does not belong to~$\sA$.
 Morever, the pair of morphisms $b$ and~$c$ does not have a weak
pushout in~$\sA$; so weak cokernel pairs do not exist in~$\sA$.
 In fact, the morphism $b=c$ does not even have a very weak cokernel
pair in~$\sA$; cf.\ Example~\ref{no-very-weak-cokernel-pair-example}.
\end{rem}

 Let $\cM$ be a QE\+mono class in a category~$\sC$.
 For every morphism $m\:C\rarrow D$ belonging to $\cM$, consider its
cokernel pair $k_1$, $k_2\:D\rightrightarrows K$.
 By an \emph{$\cM$\+sequence} we mean a diagram
$$
 \xymatrix{
  C\ar[r]^m & D \ar@<2pt>[r]^{k_1}\ar@<-2pt>[r]_{k_2} & K
 }
$$
arising from some morphism $m\in\cM$ in this way.

 Denote by $J$ the category with three objects $1$, $2$, and $3$,
and four nonidentity morphisms $1\rarrow 2$, \ $2\rightrightarrows 3$,
and $1\rarrow 3$.
 So both the compositions $1\rarrow 2\rightrightarrows 3$ are equal
to one and the same morphism $1\rarrow 3$.
 Given a category $\sC$, we are interested in the category of
diagrams~$\sC^J$.
 In particular, for any QE\+mono class $\cM$ in $\sC$,
the $\cM$\+sequences form a subclass of objects of~$\sC^J$.

 Let $\sA$ be a $\kappa$\+accessible category.
 We will say that a QE\+mono class $\cM$ in $\sA$ is \emph{locally
$\kappa$\+coherent} if the $\cM$\+sequences in $\sA$ are precisely all
the $\kappa$\+directed colimits of $\cM$\+sequences in $\sA$ with
all the three terms $C$, $D$, $K$ belonging to~$\sA_{<\kappa}$.
 Here the $\kappa$\+directed colimits are taken in the diagram
category~$\sA^J$.
 The terminology ``locally $\kappa$\+coherent'' comes from
the paper~\cite[Section~1]{Plce}.

\begin{lem} \label{loc-coh-QE-mono-classes-directed-colimit-closed}
 For any locally $\kappa$\+coherent QE\+mono class $\cM$ in
a $\kappa$\+accessible category\/ $\sA$, the class of all
$\cM$\+sequences is closed under $\kappa$\+directed colimits
in\/~$\sA^J$.
\end{lem}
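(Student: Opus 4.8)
The plan is to recognize this as another direct application of Proposition~\ref{accessible-subcategory}, exactly as in the proof of Lemma~\ref{strongly-pure-monos-directed-colimit-closed}, but now carried out in the diagram category $\sA^J$ rather than in~$\sA^\to$. The point of the locally $\kappa$\+coherent hypothesis is precisely to identify the class of $\cM$\+sequences with a class of the form $\varinjlim_{(\kappa)}\sT$, after which the closure under $\kappa$\+directed colimits is immediate.

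First I would observe that $J$ is a finite category all of whose endomorphisms are identity morphisms, so Proposition~\ref{rigid-finite-diagram-category-accessible} applies and tells us that $\sA^J$ is $\kappa$\+accessible, with a functor $F\:J\rarrow\sA$ being $\kappa$\+presentable in $\sA^J$ if and only if each of the three objects $F(1)$, $F(2)$, $F(3)$ is $\kappa$\+presentable in~$\sA$. Next I would let $\sT\subset\sA^J$ denote the class of all $\cM$\+sequences $C\rarrow D\rightrightarrows K$ in which the three terms $C$, $D$, $K$ all belong to~$\sA_{<\kappa}$. By the previous sentence, every such $\cM$\+sequence is a $\kappa$\+presentable object of $\sA^J$; and since $\sA_{<\kappa}$ is essentially small, there is only a set of them up to isomorphism, so $\sT$ may be taken to be a set of $\kappa$\+presentable objects of~$\sA^J$. (One could also remark here that, for a morphism $m\:C\rarrow D$ in $\cM$ with $C$, $D\in\sA_{<\kappa}$, the third term $K$ of its cokernel pair is automatically $\kappa$\+presentable by Lemma~\ref{small-colimits-preserved}, a cokernel pair being a $\kappa$\+small colimit.)

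The key step is then to invoke the hypothesis directly: by the definition of a locally $\kappa$\+coherent QE\+mono class, the class of all $\cM$\+sequences in $\sA$ coincides with $\varinjlim_{(\kappa)}\sT\subset\sA^J$. Applying Proposition~\ref{accessible-subcategory} to the $\kappa$\+accessible category $\sA^J$ and the set~$\sT$, we conclude that $\varinjlim_{(\kappa)}\sT$ is closed under $\kappa$\+directed colimits in~$\sA^J$; hence so is the class of all $\cM$\+sequences, as desired.

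I do not expect any serious obstacle here, since the argument is formal once the setup is in place. The only points requiring care are verifying that $J$ meets the hypotheses of Proposition~\ref{rigid-finite-diagram-category-accessible} and that $\sT$ is genuinely a set of $\kappa$\+presentable objects of~$\sA^J$; both are routine. If anything plays the role of the ``hard part,'' it is simply keeping straight that the closure is asserted in the diagram category $\sA^J$, so that the identification of $\cM$\+sequences with $\varinjlim_{(\kappa)}\sT$ supplied by local $\kappa$\+coherence is used in exactly the form in which Proposition~\ref{accessible-subcategory} consumes it.
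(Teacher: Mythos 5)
Your proof is correct and follows exactly the paper's own route: identify $\sA^J$ as $\kappa$\+accessible via Proposition~\ref{rigid-finite-diagram-category-accessible}, use local $\kappa$\+coherence to write the class of $\cM$\+sequences as $\varinjlim_{(\kappa)}\sT$ for a set $\sT$ of $\kappa$\+presentable objects of $\sA^J$, and conclude by Proposition~\ref{accessible-subcategory}. The paper's proof is just a terser statement of the same two steps.
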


\begin{proof}
 By Proposition~\ref{rigid-finite-diagram-category-accessible},
the category $\sA^J$ is $\kappa$\+accessible, and its
$\kappa$\+presentable objects are precisely all the $J$\+shaped
diagrams in~$\sA_{<\kappa}$.
 Now the desired assertion follows from
Proposition~\ref{accessible-subcategory} applied to
the $\kappa$\+accessible category~$\sA^J$.
\end{proof}

\begin{lem} \label{loc-coh-QE-mono-classes-trivial-characterization}
 A QE\+mono class $\cM$ in a $\kappa$\+accessible category\/ $\sA$ is
locally $\kappa$\+coherent if and only if $\cM$ is precisely the class
of all $\kappa$\+directed colimits of the morphisms from $\cM$ whose
domains and codomains are $\kappa$\+presentable.
 Here the $\kappa$\+directed colimits are taken in the category of
morphisms\/~$\sA^\to$.
\end{lem}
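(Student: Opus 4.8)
The plan is to exploit the fact that an $\cM$\+sequence is completely determined by its first morphism $m\in\cM$, and that formation of cokernel pairs is functorial and commutes with $\kappa$\+directed colimits. First I would set up the comparison between the two diagram categories. Let $I\hookrightarrow J$ be the inclusion identifying the two objects and the nonidentity morphism of $I$ with the objects $1$, $2$ and the morphism $1\rarrow2$ of $J$; it induces a restriction functor $\Phi\:\sA^J\rarrow\sA^\to$. Since $\sA$ has $\kappa$\+directed colimits, such colimits are computed pointwise in both $\sA^J$ and $\sA^\to$ (cf.\ Proposition~\ref{rigid-finite-diagram-category-accessible}), so $\Phi$ preserves $\kappa$\+directed colimits, and it sends the $\cM$\+sequence of a morphism $m$ to~$m$. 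Conversely, on the full subcategory of $\sA^\to$ spanned by the morphisms in $\cM$, assigning to $m$ its cokernel pair defines a functor into $\sA^J$, whose composite with $\Phi$ is the identity.

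Next I would record the two interchange facts that make this correspondence respect the $\kappa$\+presentable level. If $m\:C\rarrow D$ lies in $\cM$ with $C$, $D\in\sA_{<\kappa}$, then its cokernel pair $D\rightrightarrows K$ is a $\kappa$\+small colimit that exists in $\sA$, so $K\in\sA_{<\kappa}$ by Lemma~\ref{small-colimits-preserved}; hence the $\cM$\+sequences with all three terms $\kappa$\+presentable correspond exactly, under $\Phi$ and the cokernel-pair functor, to the morphisms $\cM_{<\kappa}$ from $\cM$ with $\kappa$\+presentable domain and codomain. Moreover, because colimits commute with colimits, for a $\kappa$\+directed colimit $m=\varinjlim_\xi m_\xi$ of morphisms $m_\xi\in\cM$ formed in $\sA^\to$ the cokernel pair of $m$ exists and equals the $\kappa$\+directed colimit of the cokernel pairs of the~$m_\xi$. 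Thus a $\kappa$\+directed system in $\cM_{<\kappa}$ lifts, via the cokernel-pair functor, to a $\kappa$\+directed system of $\cM$\+sequences with $\kappa$\+presentable terms having the same colimit, while $\Phi$ sends the colimit of such a system of $\cM$\+sequences back to the colimit of the underlying morphisms.

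With these preparations both implications reduce to bookkeeping. Suppose $\cM$ is locally $\kappa$\+coherent. If $m=\varinjlim_\xi m_\xi$ in $\sA^\to$ with $m_\xi\in\cM_{<\kappa}$, I would lift to a $\kappa$\+directed system of $\cM$\+sequences with $\kappa$\+presentable terms; by hypothesis its colimit in $\sA^J$ is an $\cM$\+sequence, and applying $\Phi$ shows that its first morphism $m$ lies in~$\cM$, giving $\varinjlim_{(\kappa)}\cM_{<\kappa}\subseteq\cM$. The reverse inclusion follows by applying $\Phi$ to the presentation of the $\cM$\+sequence of a given $m\in\cM$ that local $\kappa$\+coherence provides. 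For the converse, assume $\cM=\varinjlim_{(\kappa)}\cM_{<\kappa}$ and verify the two inclusions defining local coherence: every $\cM$\+sequence is the colimit of the lifts of a presentation of its first morphism, and every $\kappa$\+directed colimit in $\sA^J$ of $\cM$\+sequences with $\kappa$\+presentable terms has first morphism in $\cM$ (by the hypothesis applied through $\Phi$), whose cokernel pair, by the colimit-commutes-with-pushout computation, is precisely the pair formed by the second and third terms of the colimit diagram; hence that diagram is the $\cM$\+sequence of its first morphism.

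I expect the only load-bearing steps to be the two interchange facts of the second paragraph: the $\kappa$\+presentability of the cokernel pair $K$, where Lemma~\ref{small-colimits-preserved} is essential (without it the $\kappa$\+presentable $\cM$\+sequences would not match $\cM_{<\kappa}$), and the commutation of cokernel pairs with $\kappa$\+directed colimits, which is what lets me identify the third term of a colimit $\cM$\+sequence with an honest cokernel pair. Everything else is formal manipulation of pointwise colimits in the functor categories $\sA^J$ and $\sA^\to$ and of the functor $\Phi$ between them.
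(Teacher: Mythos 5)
Your proposal is correct and follows essentially the same route as the paper: the paper's proof consists precisely of the two facts you flag as load-bearing, namely that $\sA_{<\kappa}$ is closed under the cokernel pairs that exist in $\sA$ (Lemma~\ref{small-colimits-preserved}), so the $\kappa$\+presentable $\cM$\+sequences match the morphisms of $\cM$ with $\kappa$\+presentable domain and codomain, and that $\kappa$\+directed colimits preserve cokernel pairs because colimits commute with colimits. The remaining bookkeeping with the restriction functor $\sA^J\rarrow\sA^\to$ is exactly the formal content the paper leaves implicit.
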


\begin{proof}
 The point is that the full subcategory of $\kappa$\+presentable
objects\/ $\sA_{<\kappa}$ is closed under finite colimits in~$\sA$
(meaning those finite colimits that exist in\/~$\sA$); see
Lemma~\ref{small-colimits-preserved}.
 In particularly, $\sA_{<\kappa}$ is closed in $\sA$ under
the cokernel pairs of those morphisms that have cokernel pairs in~$\sA$.
 So, in the notation above, if $C$, $D\in\sA_{<\kappa}$, then
$K\in\sA_{<\kappa}$.
 Furthermore, all existing colimits commute with all existing colimits
in any category; in particular, $\kappa$\+directed colimits preserve
cokernel pairs in~$\sA$.
\end{proof}

\begin{prop} \label{restriction-of-loc-coh-to-presentables-is-QE-mono}
 Let $\cM$ be a locally $\kappa$\+coherent QE\+mono class in
a $\kappa$\+accessible category\/~$\sA$.
 Then the class $\cM\cap\sA_{<\kappa}^\to$ of all morphisms from
$\cM$ with $\kappa$\+presentable domains and codomains is a QE\+mono
class in the category\/~$\sA_{<\kappa}$.
 Furthermore, the class $\cM\cap\sA_{<\kappa}^\to$ is closed under
retracts in the category\/ $\sA_{<\kappa}^\to=(\sA^\to)_{<\kappa}
=(\sA_{<\kappa})^\to$.
\end{prop}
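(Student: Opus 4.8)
The plan is to check the three axioms~(i)--(iii) defining a QE\+mono class for the class $\cM'=\cM\cap\sA_{<\kappa}^\to$ inside $\sA_{<\kappa}$, and then to read off the closure under retracts from Proposition~\ref{accessible-subcategory} applied to the arrow category. Axiom~(iii) should be immediate from the corresponding axiom for $\cM$ together with fullness of $\sA_{<\kappa}\subset\sA$: the identity of any $\kappa$\+presentable object lies in $\cM'$, and a composition of two morphisms from $\cM'$ lies in $\cM$ and has $\kappa$\+presentable domain and codomain, hence lies in $\cM'$. For axiom~(i), given a span $m\:C\rarrow D$ in $\cM'$ and a morphism $f\:C\rarrow C'$ in $\sA_{<\kappa}$, axiom~(i) for $\cM$ produces a pushout square in $\sA$ with the induced morphism $m'\:C'\rarrow D'$ again in $\cM$. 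A pushout is a $\kappa$\+small colimit, and its three given corners $C$, $D$, $C'$ are $\kappa$\+presentable, so by the first assertion of Lemma~\ref{small-colimits-preserved} the object $D'$ is $\kappa$\+presentable and the square is a pushout in $\sA_{<\kappa}$ as well; thus $m'\in\cM\cap\sA_{<\kappa}^\to=\cM'$. The same argument, applied to the span $(m,m)$, shows that the cokernel pair $k_1$, $k_2\:D\rightrightarrows K$ of a morphism $m\in\cM'$ (which exists in $\sA$ by axiom~(i) for $\cM$) has $K\in\sA_{<\kappa}$ and is simultaneously the cokernel pair of $m$ in $\sA_{<\kappa}$.

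The one genuinely delicate point is axiom~(ii), because it asserts that $m$ is an \emph{equalizer}, i.e., a limit, and limits are not governed by the closure of $\sA_{<\kappa}$ under $\kappa$\+small colimits. The resolution I would use is not to construct a new equalizer in $\sA_{<\kappa}$, but instead to verify that the morphism $m$ itself---already the equalizer of $(k_1,k_2)$ in $\sA$ by axiom~(ii) for $\cM$---satisfies the universal property of the equalizer within $\sA_{<\kappa}$. Concretely, for a test object $S\in\sA_{<\kappa}$ and a morphism $g\:S\rarrow D$ with $k_1\circ g=k_2\circ g$, the equalizer property in $\sA$ supplies a unique $h\:S\rarrow C$ with $m\circ h=g$; as $S$, $C\in\sA_{<\kappa}$ and $\sA_{<\kappa}$ is a full subcategory of $\sA$, this $h$ is the required factorization, unique already in $\sA$ and hence a fortiori in $\sA_{<\kappa}$. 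This is precisely the limit-preservation phenomenon recorded in Lemma~\ref{limits-preserved}, now read in the direction of existence, and it completes the verification that $\cM'$ is a QE\+mono class in $\sA_{<\kappa}$. I expect this step to be the main obstacle, being the only place where the elementary colimit-closure of $\sA_{<\kappa}$ is unavailable and one must argue instead through fullness.

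For the closure under retracts, I would pass to $\sA^\to$. By Proposition~\ref{rigid-finite-diagram-category-accessible} (for the two-object diagram $D=I$), the category $\sA^\to$ is $\kappa$\+accessible with $(\sA^\to)_{<\kappa}=(\sA_{<\kappa})^\to$, and $\cM'$ is, up to isomorphism, a set of $\kappa$\+presentable objects of $\sA^\to$ (since $\sA_{<\kappa}$ is essentially small). By Lemma~\ref{loc-coh-QE-mono-classes-trivial-characterization}, the class $\cM$ coincides with $\varinjlim_{(\kappa)}\cM'$, viewed as a full subcategory $\sB\subset\sA^\to$. Proposition~\ref{accessible-subcategory}, applied to $\sA^\to$ and the set $\sT=\cM'$, then tells us that the $\kappa$\+presentable objects of $\sB$ are on the one hand exactly the objects of $\sB$ lying in $(\sA^\to)_{<\kappa}$, that is the objects of $\cM\cap(\sA_{<\kappa})^\to=\cM'$, and on the other hand exactly all the retracts of objects from $\cM'$. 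Since a retract in $\sA^\to$ of a $\kappa$\+presentable object is again $\kappa$\+presentable, any retract of a morphism from $\cM'$ automatically lies in $(\sA_{<\kappa})^\to$ and is a $\kappa$\+presentable object of $\sB$, hence lies in $\cM$ and therefore in $\cM'$. This yields the asserted closure of $\cM'$ under retracts in $\sA_{<\kappa}^\to$.
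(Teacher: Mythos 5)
Your proof is correct and follows essentially the same route as the paper: axioms (i)--(iii) are verified for $\cM\cap\sA_{<\kappa}^\to$ by combining Lemma~\ref{small-colimits-preserved} (closure of $\sA_{<\kappa}$ under existing pushouts) with the elementary observation that an equalizer diagram in $\sA$ lying in the full subcategory $\sA_{<\kappa}$ is an equalizer diagram there, and the retract-closure is deduced from local $\kappa$\+coherence via Proposition~\ref{accessible-subcategory} applied to $\sA^\to$. The only cosmetic difference is that the paper phrases the retract step as ``retracts are special cases of $\kappa$\+directed colimits,'' which is the same fact you extract from Proposition~\ref{accessible-subcategory}; also note that your appeal to Lemma~\ref{limits-preserved} in the equalizer step is unnecessary, since fullness alone suffices, as your own argument in fact shows.
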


\begin{proof}
 Condition~(iii) obviously holds for $\cM\cap\sA_{<\kappa}^\to$ whenever
it holds for~$\cM$.
 Condition~(i) holds for $\cM\cap\sA_{<\kappa}^\to$ whenever it holds
for $\cM$, because the full subcategory $\sA_{<\kappa}$ is closed in
$\sA$ under all the pushouts that exist in~$\sA$
(by Lemma~\ref{small-colimits-preserved}).
 Finally, condition~(ii) holds for $\cM\cap\sA_{<\kappa}^\to$ whenever
conditions (i) and~(ii) hold for $\cM$, because any diagram in
$\sA_{<\kappa}$ that is an equalizer diagram in $\sA$ is also
an equalizer diagram in~$\sA_{<\kappa}$.
 These arguments do not even use the assumption of local
$\kappa$\+coherence of the QE\+mono class $\cM$ in~$\sA$.
 The class $\cM\cap\sA_{<\kappa}^\to$ is closed under retracts in
$\sA_{<\kappa}^\to$ because retracts are special cases of
$\kappa$\+directed colimits in\/ $\sA^\to$,
see~\cite[Observation~2.4]{AR}.
\end{proof}

\Section{Construction of Locally Coherent QE-Mono Classes}
\label{QE-mono-construction-secn}

 Let $D$ be the finite category
$$
 \xymatrix{
  3 \\
  1 \ar[u] \ar[r] & 2
 }
$$
i.~e., the category with three objects $1$, $2$, $3$ and two
nonidentity morphisms $1\rarrow 2$ and $1\rarrow 3$.
 By the \emph{category of spans} in a category $\sC$ we mean
the category of $D$\+shaped diagrams in $\sC$, that is,
the category of functors~$\sC^D$.

\begin{thm} \label{direct-limit-closure-of-QE-mono-theorem}
 Let\/ $\sA$ be a $\kappa$\+accessible category and $\cN$ be
a QE\+mono class in the category\/~$\sA_{<\kappa}$.
 Then the class\/ $\varinjlim_{(\kappa)}\cN\subset\sA^\to$ of all
$\kappa$\+directed colimits of morphisms from $\cN$ (the colimits
being taken in\/~$\sA^\to$) is a locally $\kappa$\+coherent
QE\+mono class in the category\/~$\sA$.
\end{thm}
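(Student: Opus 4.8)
The plan is to put $\cM=\varinjlim_{(\kappa)}\cN\subset\sA^\to$ and verify conditions~(i)--(iii) in the definition of a QE\+mono class directly for $\cM$, deducing local $\kappa$\+coherence at the very end. The guiding principle throughout is to represent the relevant diagram (a morphism, a span, or a composable pair lying over $\cM$) as a $\kappa$\+directed colimit of the analogous diagram of $\cN$\+morphisms between $\kappa$\+presentable objects, to apply the QE\+mono axioms for $\cN$ termwise, and then to pass to the colimit, using that $\kappa$\+directed colimits commute with the finite (co)limits involved. First I would record the backbone. By Proposition~\ref{rigid-finite-diagram-category-accessible} applied to $D=I$, the category $\sA^\to$ is $\kappa$\+accessible with $(\sA^\to)_{<\kappa}=(\sA_{<\kappa})^\to$; regarding $\cN$ (up to isomorphism, a set) as a set of $\kappa$\+presentable objects of $\sA^\to$, Proposition~\ref{accessible-subcategory} shows that $\cM$ is closed under $\kappa$\+directed colimits in $\sA^\to$ and supplies the factorization criterion: a morphism lies in $\cM$ if and only if every morphism into it from an object of $(\sA_{<\kappa})^\to$ factors, in $\sA^\to$, through an object of $\cN$. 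Condition~(iii) for identities is then immediate, since for $A=\varinjlim_\xi S_\xi$ with $S_\xi\in\sA_{<\kappa}$ the identity $\id_A$ is the colimit in $\sA^\to$ of the identities $\id_{S_\xi}\in\cN$.

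Closure of $\cM$ under composition is the step I expect to be the main obstacle. I would work in the $\kappa$\+accessible category $\sA^{[2]}$ of composable pairs (Proposition~\ref{rigid-finite-diagram-category-accessible}) and show, via the factorization criterion, that every composable pair $(m,n)$ with $m$, $n\in\cM$ is a $\kappa$\+directed colimit of composable pairs both of whose legs lie in $\cN$; applying the composition functor $\sA^{[2]}\rarrow\sA^\to$, which preserves $\kappa$\+directed colimits, together with closure of $\cN$ under composition then gives $n\circ m\in\cM$. The delicate point is aligning the two middle objects. Factoring the second leg $t\rarrow n$ through some $\tilde p_0\:V_0\rarrow W_0$ in $\cN$, and factoring the first leg through some $\tilde n\:U\rarrow V'$ in $\cN$ whose codomain $V'$ receives a comparison morphism $V_0\rarrow V'$ over $D$, produces a pair whose two legs need not be composable. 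The key move is to push out $\tilde p_0$ along $V_0\rarrow V'$: by pushout\+stability of $\cN$ this yields a morphism $\tilde p\:V'\rarrow W$ in $\cN$, so that $(\tilde n,\tilde p)$ becomes a composable pair in $\cN$, and the universal property of the pushout supplies the remaining comparison morphisms making the factorization commute.

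For condition~(i) I would argue similarly, but more easily, in the $\kappa$\+accessible category $\sA^D$ of spans: using the factorization criterion I would show that a span $(m\:C\rarrow D,\ f\:C\rarrow C')$ with $m\in\cM$ is a $\kappa$\+directed colimit of spans $(m_\xi,f_\xi)$ with $m_\xi\in\cN$ and all objects $\kappa$\+presentable, where the second leg is handled simply by factoring the relevant maps through the $\kappa$\+directed colimit presentation of $C'$, with no pushout needed. Pushout\+stability of $\cN$ provides pushout morphisms $m'_\xi\in\cN$, which already exist in $\sA_{<\kappa}$ and remain pushouts in $\sA$ by Lemma~\ref{small-colimits-preserved}; since colimits commute with colimits, $\varinjlim_\xi$ of these pushouts is the pushout of $(m,f)$, which therefore exists, and the resulting morphism $m'=\varinjlim_\xi m'_\xi$ lies in $\cM$.

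Condition~(ii) I would obtain from the commutation lemmas. Writing $m=\varinjlim_\xi m_\xi$ with $m_\xi\in\cN$, each $m_\xi$ is the equalizer of its cokernel pair $k^\xi_1$, $k^\xi_2\:D_\xi\rightrightarrows K_\xi$ in $\sA_{<\kappa}$; this cokernel pair is a finite colimit, hence also computed in $\sA$ by Lemma~\ref{small-colimits-preserved}, and the equalizer is preserved by the inclusion $\sA_{<\kappa}\rarrow\sA$ because $\sA_{<\kappa}$ is weakly colimit\+dense in $\sA$ (Lemma~\ref{limits-preserved}), so $m_\xi$ is the equalizer of $(k^\xi_1,k^\xi_2)$ in $\sA$ as well. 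Passing to the colimit, the cokernel pair of $m$ is $\varinjlim_\xi(k^\xi_1,k^\xi_2)\:D\rightrightarrows K$ (again colimits commute with colimits, so in particular cokernel pairs exist by part~(i)), while Lemma~\ref{directed-colimits-commute-with-small-limits} identifies the equalizer of $(k_1,k_2)$ with $\varinjlim_\xi C_\xi=C$ and the equalizing morphism with $m$; thus $m$ is the equalizer of its cokernel pair in $\sA$. Finally, having shown that $\cM$ is a QE\+mono class, local $\kappa$\+coherence follows from Lemma~\ref{loc-coh-QE-mono-classes-trivial-characterization}: since $\cN\subseteq\cM\cap(\sA_{<\kappa})^\to\subseteq\cM$ and $\cM$ is closed under $\kappa$\+directed colimits in $\sA^\to$, one has $\cM=\varinjlim_{(\kappa)}\cN\subseteq\varinjlim_{(\kappa)}\bigl(\cM\cap(\sA_{<\kappa})^\to\bigr)\subseteq\cM$, so $\cM$ coincides with the class of all $\kappa$\+directed colimits of its members with $\kappa$\+presentable domain and codomain, which is exactly local $\kappa$\+coherence.
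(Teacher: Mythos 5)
Your proposal is correct and follows essentially the same route as the paper's proof: the same factorization criterion from Proposition~\ref{accessible-subcategory} in the accessible diagram categories, the same use of Lemmas~\ref{small-colimits-preserved}, \ref{limits-preserved}, and~\ref{directed-colimits-commute-with-small-limits} for conditions~(i) and~(ii), and, crucially, the same pushout trick to align the middle objects when verifying closure under composition. The only cosmetic difference is that you package condition~(i) as a direct factorization argument in the span category and condition~(iii) via the category of composable pairs, where the paper invokes the comma-category Propositions~\ref{comma-category-accessible}--\ref{comma-category-objects-directed-colimits} and checks the factorization criterion for the composite directly.
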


\begin{proof}
 Put $\cM=\varinjlim_{(\kappa)}\cN$.
 In order to check that condition~(i) for the class $\cN$ in
$\sA_{<\kappa}$ implies condition~(i) for the class $\cM$ in~$\sA$,
let us show that all spans $(m,f)$ in $\sA$ (where $m\:C\rarrow D$
and $f\:C\rarrow C'$) such that $m\in\cM$, are $\kappa$\+directed
colimits, in the category of spans in~$\sA$, of spans $(n,g)$
in $\sA_{<\kappa}$ such that $n\in\cN$.

 Once again, we use the fact that the category of morphisms $\sA^\to$
is $\kappa$\+accessible, and its $\kappa$\+presentable objects are
precisely all the morphisms with $\kappa$\+presentable domains and
codomains.
 Let $\sM$ be the full subcategory in $\sA^\to$ whose objects are all
the morphisms belonging to~$\cM$.
 By Proposition~\ref{accessible-subcategory} applied to the category
$\sA^\to$, the category $\sM$ is $\kappa$\+accessible, and its
$\kappa$\+presentable objects are precisely all the retracts of
the morphisms belonging to~$\cN$.
 Now let $F\:\sM\rarrow\sA$ be the functor taking every morphism
$m\:C\rarrow D$ to its domain $C$, and let $G\:\sA\rarrow\sA$ be
the identity functor.
 Then the category $F\down G$ defined in
Section~\ref{preliminaries-secn} is precisely the category of all
spans $(m,f)$ in $\sA$ with $m\in\cM$.
 By Proposition~\ref{comma-category-accessible}, it follows that
the category $F\down G$ is $\kappa$\+accessible, and its
$\kappa$\+presentable objects are precisely all the spans $(n',g')$
in $\sA_{<\kappa}$ such that $n'$~is a retract of a morphism
belonging to~$\cN$.

 Moreover, let $\sS\subset\sM_{<\kappa}$ be a set of representatives
of the isomorphism classes of morphisms belonging to $\cN$, and let
$\sT\subset\sA_{<\kappa}$ be a set of representatives of the isomorphism
classes of $\kappa$\+presentable objects of~$\sA$.
 Then Proposition~\ref{comma-category-objects-directed-colimits}
applied to the sets of $\kappa$\+presentable objects $\sS$ and $\sT$
tells us that all spans $(m,f)\in\sA^D$ with $m\in\cM$ are
$\kappa$\+directed colimits, in the category of spans~$\sA^D$,
of spans $(n,g)\in(\sA_{<\kappa})^D$ with $n\in\cN$ (as desired).

 By the second assertion of Lemma~\ref{small-colimits-preserved},
any finite colimit in $\sA_{<\kappa}$ is also a colimit in~$\sA$.
 In particular, this applies to pushouts.
 So pushouts of the spans $(n,g)$ in $\sA_{<\kappa}$ remain pushouts
in~$\sA$.
 Since $\kappa$\+directed colimits always preserve pushouts, we
have shown that condition~(i) for $\cN$ in $\sA_{<\kappa}$ implies
condition~(i) for $\cM$ in~$\sA$.

 The proof of the assertion that conditions~(i) and~(ii) for $\cN$
in $\sA_{<\kappa}$ imply condition~(ii) for $\cM$ in $\sA$ is
somewhat similar, and based on the arguments above together with
Lemmas~\ref{limits-preserved} and~\ref{effective-monomorphisms-lemma}.
 By assumption, every morphism $n\in\cN$ is the equalizer in
$\sA_{<\kappa}$ of its cokernel pair in~$\sA_{<\kappa}$.
 We have already seen that the cokernel pair of~$n$ in~$\sA_{<\kappa}$
is also the cokernel pair of~$n$ in~$\sA$.
 By Lemma~\ref{limits-preserved}, any limit that exists in
$\sA_{<\kappa}$ is also a limit in~$\sA$.
 Therefore, $n$~is the equalizer in $\sA$ of its cokernel pair in~$\sA$.
 So $n$~is an effective monomorphism in~$\sA$.
 Now any morphism $m\in\cM$ is a $\kappa$\+directed colimit in
$\sA^\to$ of morphisms $n\in\cN$, and $\kappa$\+directed colimits of
effective monomorphisms in $\sA$ are effective monomorphisms in $\sA$
by Lemma~\ref{effective-monomorphisms-lemma}.

 Let us prove that conditions~(i) and~(iii) for $\cN$ in $\sA_{<\kappa}$
imply condition~(iii) for $\cM$ in~$\sA$.
 Obviously, every identity morphism in $\sA$ is a $\kappa$\+directed
colimit of identity morphisms in~$\sA_{<\kappa}$.
 It remains to show that the class $\cM$ is closed under compositions.
 Let $m'\:C\rarrow D$ and $m''\:D\rarrow E$ be two morphisms belonging
to~$\cM$.
 By Proposition~\ref{accessible-subcategory}, in order to prove that
the composition $m=m''\circ m'$ belongs to $\cM$, we need to check that
every morphism $t\rarrow m$ in $\sA^\to$ from a morphism~$t$ with
$\kappa$\+presentable domain and codomain, $t\in(\sA^\to)_{<\kappa}
=(\sA_{<\kappa})^\to$, into the morphism~$m$ factorizes as
$t\rarrow n\rarrow m$, where $n\in\cN$.
 So we have a commutative diagram
$$
 \xymatrix{
  C \ar[r]^{m'} & D\ar[r]^{m''} & E \\
  S \ar[u]^c \ar[rr]_t && T \ar[u]_e
 }
$$
in $\sA$ with $\kappa$\+presentable objects $S$ and~$T$.

 Considering the composition $m'\circ c\:S\rarrow D$ of two morphisms
$c\:S\rarrow C$ and $m'\:C\rarrow D$, have a morphism $(m'\circ c,\>e)\:
t\rarrow m''$ in the category~$\sA^\to$.
 Since the morphism $m''\:D\rarrow E$ belongs to $\cM$, by
Proposition~\ref{accessible-subcategory} the morphism
$(m'\circ c,\>e)$ factorizes as $t\rarrow v\rarrow m''$, where
$v\:U\rarrow V$ is some morphism belonging to~$\cN$.
 So we have a commutative diagram
$$
 \xymatrix{
  C \ar[r]^{m'} & D\ar[r]^{m''} & E \\
  & U \ar[u]_d \ar[r]^v & V \ar[u]_f \\
  S \ar[uu]^c \ar[rr]_t \ar[ru]_u
  && T \ar[u]_g \ar@/_2pc/[uu]_e
 }
$$
in $\sA$ with $\kappa$\+presentable objects $S$, $T$, $U$, and~$V$.

 Now we have a morphism $(c,d)\:u\rarrow m'$ in the category~$\sA^\to$,
where $u$~is a morphism with $\kappa$\+presentable domain and
codomain.
 Since the morphism $m'\:C\rarrow D$ belongs to $\cM$, by
Proposition~\ref{accessible-subcategory} the morphism $(c,d)$
factorizes as $u\rarrow n'\rarrow m'$, where $n'\:X\rarrow Y$
is some morphism belonging to~$\cN$.
 Hence we have a commutative diagram
\begin{equation} \label{common-diagram-for-two-proofs-I}
\begin{gathered}
 \xymatrix{
  C \ar[r]^{m'} & D\ar[r]^{m''} & E \\
  X \ar[r]^{n'} \ar[u]_{c'} & Y \ar[u]^{d'} \\
  & U \ar@/_1pc/[uu]_d \ar[r]^v \ar[u]^y & V \ar[uu]_f \\
  S \ar@/^2pc/[uuu]^c \ar[rr]_t \ar[ru]_u \ar[uu]_x
  && T \ar[u]_g \ar@/_2pc/[uuu]_e
 }
\end{gathered}
\end{equation}
in $\sA$ with $\kappa$\+presentable objects $S$, $T$, $U$, $V$, $X$,
and~$Y$.

 Finally, by condition~(i) for the class $\cN$ in $\sA_{<\kappa}$,
the span $v\:U\rarrow V$, \ $y\:U\rarrow Y$ has a pushout in
$\sA_{<\kappa}$, which by Lemma~\ref{small-colimits-preserved} is
also a pushout in~$\sA$.
 Denote the resulting pushout square by
$$
 \xymatrix{
  Y \ar@{..>}[r]^{n''} & Z \\
  U \ar[u]^y \ar[r]^v & V \ar@{..>}[u]^h
 }
$$
 Condition~(i) for the class $\cN$ in $\sA_{<\kappa}$ also tells us that
$n''\in\cN$ (since $v\in\cN$).

 We have a pair of morphisms $m''\circ d'\:Y\rarrow E$ and
$f\:V\rarrow E$ such that $m''\circ d'\circ y=f\circ v$.
 Hence there exists a unique morphism $e'\:Z\rarrow E$ making
the diagram
$$
 \xymatrix{
  D\ar[r]^{m''} & E \\
  Y \ar[r]^{n''} \ar[u]^{d'} & Z \ar@{..>}[u]^{e'} \\
  U \ar[u]^y \ar[r]^v & V \ar[u]^h \ar@/_1.5pc/[uu]_f
 }
$$
commutative.
 We have arrived to the commutative diagram
$$
 \xymatrix{
  C \ar[r]^{m'} & D\ar[r]^{m''} & E \\
  X \ar[r]^{n'} \ar[u]_{c'} & Y \ar[u]^{d'} \ar[r]^{n''}
  & Z \ar[u]^{e'} \\
  & U \ar[r]^v \ar[u]^y & V \ar[u]^h \\
  S \ar@/^2pc/[uuu]^c \ar[rr]_t \ar[ru]_u \ar[uu]_x
  && T \ar[u]_g \ar@/_2pc/[uuu]_e
 }
$$
proving that the morphism $t\rarrow m=m''\circ m'$ in $\sA^\to$
factorizes as $t\rarrow n\rarrow m$, where $n=n''\circ n'$.

 As both the morphisms $n'$ and~$n''$ belong to $\cN$ by
construction, so does their composition $n''\circ n'$, by
condition~(iii) for the class $\cN$ in~$\sA_{<\kappa}$.
 This finishes the proof of condition~(iii) for the class $\cM$
in~$\sA$.

 It remains to say that one obviously has $\cN\subset
\cM\cap\sA_{<\kappa}^\to$.
 In fact, by Proposition~\ref{accessible-subcategory},
\,$\cM\cap\sA_{<\kappa}^\to$ is precisely the class of all retracts of
the morphisms from~$\cN$ (the retracts being taken in the category
$\sA^\to$ or~$\sA_{<\kappa}^\to$).
 So the QE\+mono class $\cM$ in $\sA$ is locally $\kappa$\+coherent
by Lemmas~\ref{loc-coh-QE-mono-classes-directed-colimit-closed}
and~\ref{loc-coh-QE-mono-classes-trivial-characterization}.
\end{proof}

\begin{cor} \label{QE-mono-classes-bijection-cor}
 For any $\kappa$\+accessible category\/ $\sA$, there is a bijective
correspondence between locally $\kappa$\+coherent QE\+mono classes
in\/ $\sA$ and QE\+mono classes in the category\/~$\sA_{<\kappa}$
closed under retracts in\/~$\sA_{<\kappa}^\to$.
 The bijection assigns to every locally $\kappa$\+coherent QE\+mono
class $\cM$ in\/ $\sA$ the retraction-closed QE\+mono class
$\cN=\cM\cap\sA_{<\kappa}^\to$ in\/~$\sA_{<\kappa}$.
 Conversely, to every retraction-closed QE\+mono class $\cN$ in\/
$\sA_{<\kappa}$, the locally $\kappa$\+coherent QE\+mono class
$\cM=\varinjlim_{(\kappa)}\cN$ in\/ $\sA$ is assigned.
\end{cor}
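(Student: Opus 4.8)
The plan is to verify that the two assignments in the statement are well-defined and mutually inverse, relying almost entirely on the results already in place. First I would record that both maps land where they should. By Proposition~\ref{restriction-of-loc-coh-to-presentables-is-QE-mono}, if $\cM$ is a locally $\kappa$\+coherent QE\+mono class in $\sA$, then $\cN=\cM\cap\sA_{<\kappa}^\to$ is a QE\+mono class in $\sA_{<\kappa}$ that is closed under retracts in $\sA_{<\kappa}^\to$, so the first assignment is defined. Conversely, by Theorem~\ref{direct-limit-closure-of-QE-mono-theorem}, if $\cN$ is any QE\+mono class in $\sA_{<\kappa}$, then $\cM=\varinjlim_{(\kappa)}\cN$ is a locally $\kappa$\+coherent QE\+mono class in $\sA$, so the second assignment is defined as well. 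With both maps in hand, only the two round-trips need checking.

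For the composite that starts from a retraction-closed QE\+mono class $\cN$ in $\sA_{<\kappa}$, I would form $\cM=\varinjlim_{(\kappa)}\cN$ and then compute $\cM\cap\sA_{<\kappa}^\to$. The concluding paragraph of the proof of Theorem~\ref{direct-limit-closure-of-QE-mono-theorem} already identifies $\cM\cap\sA_{<\kappa}^\to$ as precisely the class of all retracts (taken in $\sA^\to$, equivalently in $\sA_{<\kappa}^\to$) of the morphisms from $\cN$. Since $\cN$ was assumed closed under retracts in $\sA_{<\kappa}^\to$, this retract-closure coincides with $\cN$ itself, yielding $\cM\cap\sA_{<\kappa}^\to=\cN$. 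This is the one direction in which the retraction-closure hypothesis on $\cN$ is genuinely used.

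For the composite that starts from a locally $\kappa$\+coherent QE\+mono class $\cM$ in $\sA$, I would form $\cN=\cM\cap\sA_{<\kappa}^\to$ and verify $\varinjlim_{(\kappa)}\cN=\cM$. This is exactly the content of Lemma~\ref{loc-coh-QE-mono-classes-trivial-characterization}: local $\kappa$\+coherence of $\cM$ means precisely that $\cM$ equals the class of all $\kappa$\+directed colimits, taken in $\sA^\to$, of those morphisms from $\cM$ whose domains and codomains are $\kappa$\+presentable; but the latter morphisms are exactly the members of $\cM\cap\sA_{<\kappa}^\to=\cN$, so $\cM=\varinjlim_{(\kappa)}\cN$.

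Since both composites are the identity, the correspondence is a bijection. There is no substantial obstacle here, as the structural work was done in Theorem~\ref{direct-limit-closure-of-QE-mono-theorem}, Proposition~\ref{restriction-of-loc-coh-to-presentables-is-QE-mono}, and Lemma~\ref{loc-coh-QE-mono-classes-trivial-characterization}; the only point requiring care, and the step I would write out most explicitly, is the bookkeeping in the first round-trip, where one must quote the description of $\cM\cap\sA_{<\kappa}^\to$ as the retract-closure of $\cN$ and align it with the retraction-closure assumption on $\cN$. The second round-trip is then a direct application of Lemma~\ref{loc-coh-QE-mono-classes-trivial-characterization}.
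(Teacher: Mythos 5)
Your proposal is correct and follows essentially the same route as the paper: well-definedness via Proposition~\ref{restriction-of-loc-coh-to-presentables-is-QE-mono} and Theorem~\ref{direct-limit-closure-of-QE-mono-theorem}, the round-trip $\cM=\varinjlim_{(\kappa)}(\cM\cap\sA_{<\kappa}^\to)$ via Lemma~\ref{loc-coh-QE-mono-classes-trivial-characterization}, and the round-trip $\cN=(\varinjlim_{(\kappa)}\cN)\cap\sA_{<\kappa}^\to$ via the retract-closure description from the end of the proof of Theorem~\ref{direct-limit-closure-of-QE-mono-theorem} (i.e.\ Proposition~\ref{accessible-subcategory}) combined with the retraction-closedness hypothesis. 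No gaps.
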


\begin{proof}
 For every locally $\kappa$\+coherent QE\+mono class $\cM$ in $\sA$,
the class $\cN=\cM\cap\sA_{<\kappa}^\to$ is a retraction-closed
QE\+mono class in $\sA_{<\kappa}$ by
Proposition~\ref{restriction-of-loc-coh-to-presentables-is-QE-mono}.
 For every QE\+mono class $\cN$ in $\sA_{<\kappa}$, the class
$\cM=\varinjlim_{(\kappa)}\cN$ is a locally $\kappa$\+coherent
QE\+mono class in $\sA$ by
Theorem~\ref{direct-limit-closure-of-QE-mono-theorem}.
 For any locally $\kappa$\+coherent QE\+mono class $\cM$ in $\sA$,
one has $\cM=\varinjlim_{(\kappa)}(\cM\cap\sA_{<\kappa}^\to)$ by
Lemma~\ref{loc-coh-QE-mono-classes-trivial-characterization}.
 For any retraction-closed QE\+mono class $\cN$ in $\sA_{<\kappa}$,
one has $\cN=(\varinjlim_{(\kappa)}\cN)\cap\sA_{<\kappa}^\to$
by Proposition~\ref{accessible-subcategory}, as it was already
mentioned in the last paragraph of the proof of
Theorem~\ref{direct-limit-closure-of-QE-mono-theorem}.
\end{proof}

\begin{rem} \label{QE-mono-retraction-closedness-remark}
 For any $\kappa$\+accessible category $\sA$, the full subcategory
$\sA_{<\kappa}\subset\sA$ has split idempotents, because the category
$\sA$ has split idempotents by~\cite[Observation~2.4]{AR} and
$\sA_{<\kappa}$ is closed under retracts in~$\sA$.
 Conversely, for any small category $\sS$ with split idempotents and
any regular cardinal~$\kappa$, there exists a unique (up to natural
equivalence) $\kappa$\+accessible category $\sA$ such that the category
$\sA_{<\kappa}$ is equivalent to~$\sS$ \,\cite[Theorem~2.26
and Remark~2.26(1)]{AR}.

 A discussion of the retraction-closedness property of QE\+mono or
QE\+epi classes in additive categories, including in particular
idempotent-complete and weakly idempotent-complete additive categories,
can be found in~\cite[Theorems~1.1 and~1.2]{HR}.
 In the nonadditive context, we will continue this discussion below
in Section~\ref{strong-QE-epi-classes-secn}.
 At the moment, we restrict ourselves to the following simple
counterexample.
\end{rem}

\begin{ex} \label{QE-mono-not-retraction-closed-example}
 A QE\+mono class in an additive category $\sS$ with split idempotents
\emph{need not} be closed under retracts in general.
 Indeed, let $\sS=\k\vect$ be the category of finite-dimensional
vector spaces over a field~$\k$, and let $\cN$ be the class of
all monomorphisms~$n$ in $\sS$ with the dimension of the cokernel
$\dim_\k(\coker n)$ divisible by a fixed integer $q\ge2$.
 One can easily check that conditions~(i\+-iii) are satisfied for
the class $\cN$ in the category~$\sS$ (in particular, because
the cokernels are not changed by pushouts), but $\cN$ is \emph{not}
closed under retracts in the category~$\sS^\to$.
 Obviously, one has $\sS=\sA_{<\aleph_0}$, where $\sA=\k\Vect$ is
the finitely accessible category of $\k$\+vector spaces.
 The class $\cM=\varinjlim_{(\kappa)}\cN\subset\sA^\to$ consists of
all monomorphisms in~$\sA$.
\end{ex}

\Section{QE-Epi Classes}  \label{QE-epi-classes-secn}

 Let $\sC$ be a category.
 Dually to the discussion in Section~\ref{QE-mono-classes-secn},
a morphism $p\:D\rarrow E$ in $\sC$ is called a \emph{regular
epimorphism} if $p$~is the coequalizer of a parallel pair of morphisms
$d_1$, $d_2\:C\rightrightarrows D$.

 Clearly, every regular epimorphism is an epimorphism.
 Every split epimorphism is regular: if $s\:E\rarrow D$ is a morphism
such that $p\circ s=\id_E$, then $p$~is the coequalizer of the pair
of morphisms $s\circ p$ and $\id_D\:D\rightrightarrows D$.

 Dually to the definition in
Section~\ref{very-weak-cokernel-pairs-secn}, by the \emph{kernel pair}
of a morphism~$p$ one means the pullback of the cospan~$(p,p)$,
cf.\ diagram~\eqref{cospan-diagram}.
 A morphism $p\:D\rarrow E$ in $\sC$ is said to be an \emph{effective
epimorphism} if $p$~has a kernel pair $k_1$, $k_2\:K\rightrightarrows D$
and $p$~is the coequalizer of $(k_1,k_2)$.
 One can easily see that if a regular epimorphism~$p$ has a kernel
pair $(k_1,k_2)$, then $p$~is the coequalizer of $(k_1,k_2)$.
 So an epimorphism is effective if and only if it is regular
\emph{and} has a kernel pair.

\begin{lem} \label{effective-epimorphisms-lemma}
\textup{(a)} Let $C$ be a small category and\/ $\sA$ be a category
such that the colimits of all diagrams indexed by $C$ exist in\/~$\sA$.
 Let $P\:C\rarrow\sA^\to$ be a diagram such that $P(c)$ is
an effective epimorphism in\/ $\sA$ for all objects $c\in C$.
 Then the colimit of $P$, computed in\/ $\sA^\to$, is a regular
epimorphism in\/~$\sA$. \par
\textup{(b)} For any $\kappa$\+accessible category\/ $\sA$, the class
of effective epimorphisms is closed under $\kappa$\+directed colimits
in\/~$\sA^\to$.
\end{lem}

\begin{proof}
 Part~(a): the kernel pairs of the morphisms $P(c)$, \,$c\in C$, form
a diagram $K\:C\rarrow\sA^\rightrightarrows$ in the category
$\sA^\rightrightarrows$ of parallel pairs of morphisms in~$\sA$.
 By assumption, the morphism $P(c)$ is the coequalizer of the parallel
pair of morphisms $K(c)$ in $\sA$ for every object $c\in C$.
 The colimit of $K$ computed in $\sA^\rightrightarrows$, which exists
by assumption, is a parallel pair of morphisms in $\sA$ whose
coequalizer is the colimit of $P$ computed in~$\sA^\to$.
 Indeed, colimits commute with colimits in any category; so, in
particular, colimits indexed by $C$ preserve coequalizers in~$\sA$.

 Part~(b): by Lemma~\ref{directed-colimits-commute-with-small-limits},
in a $\kappa$\+accessible category, the $\kappa$\+directed colimits
commute with $\kappa$\+small limits; in particular,
the $\kappa$\+directed colimits preserve kernel pairs.
 So, if a morphism~$p$ is a $\kappa$\+directed colimit of effective
epimorphisms~$p_\xi$, then the colimit of the kernel pairs of~$p_\xi$
is the kernel pair of~$p$.
 Here we are assuming that the index~$\xi$ ranges over
a $\kappa$\+directed poset~$\Xi$.
 In any category, colimits commute with colimits; in particular,
the $\kappa$\+directed colimits preserve coequalizers.
 Thus $p$~is the coequalizer of its kernel pair.
\end{proof}

 Let $\cP$ be a class of morphism in a category~$\sC$.
 We will say that $\cP$ is a \emph{QE\+epi class} in $\sC$ if
the following conditions are satisfied:
\begin{enumerate}
\renewcommand{\theenumi}{\roman{enumi}$^*$}
\item All pullbacks of all morphisms from $\cP$ exists in $\sC$,
and the class $\cP$ is stable under pulbacks.
 In other words, for any cospan $p\:D\rarrow E$, \ $f\:E'\rarrow E$
such that $p\in\cP$, the pullback $D'$ exists, and the morphism
$p'\:D'\rarrow E'$ belongs to~$\cP$,
\begin{equation} \label{QE-epi-defn-pullback-diagram}
\begin{gathered}
 \xymatrix{
  D \ar[r]^{p} & E \\
  D' \ar@{..>}[u]^{f'} \ar@{..>}[r]_{p'} & E' \ar[u]_f
 }
\end{gathered}
\end{equation}
\item In particular, condition~(i$^*$) implies that all morphisms from
$\cP$ have kernel pairs in~$\sC$.
 It is further required that every morphism from $\cP$ is
the coequalizer of its kernel pair.
 In other words, all the morphisms from $\cP$ must be effective
epimorphisms.
\item All the identity morphisms in $\sC$ belong to $\cP$, and the class
of morphisms $\cP$ is closed under compositions.
\end{enumerate}

 It is clear from the preceding discussion that, assuming
condition~(i$^*$), condition~(ii$^*$) is equivalent to the condition
that all morphisms from $\cP$ are regular epimorphisms in~$\sC$.

\begin{exs} \label{QE-epi-classes-examples}
 (1)~If $\sC$ is an additive category, then a QE\+epi class of
morphisms in $\sC$ is the same thing as a structure of \emph{left
exact category} on $\sC$ in the sense of~\cite[Definition~3.1]{BC}.
 In the terminology of~\cite[Definition~2.2]{HR}, such additive
categories with an additional structure are called
\emph{deflation-exact categories}.

\smallskip
 (2)~In any regular category $\sC$ (in the sense of~\cite{BGO,Gra}),
the class of all regular epimorphisms is a QE\+epi
class~\cite[Definition~1.10 and Proposition~1.13(3)]{Gra}.

\smallskip
 (3)~The compositions of split epimorphisms are always split
epimorphisms, and all split epimorphisms are regular
(as explained above).
 Furthermore, all pushouts of split epimorphisms are split
epimorphisms by the argument dual to the one in
Example~\ref{QE-mono-classes-examples}(3).
 Therefore, the class $\cP$ of all split epimorphisms in
a category $\sC$ is a QE\+epi class if and only all pullbacks of
split epimorphisms exist in~$\sC$.
\end{exs}

\begin{rem} \label{pullbacks-of-split-epimorphisms-remark}
 Dually to Remark~\ref{pushouts-of-split-monomorphisms-remark},
all pullbacks of split epimorphisms exist in an additive category
if and only if the category is weakly idempotent-complete.
 However, the idempotent-complete preadditive category $\sA$ of
$\k$\+vector spaces of finite dimension~$\le n$ (where $n\ge1$ is
a fixed integer) does not have pullbacks of split epimorphisms.
 In fact, the category $\sA$ does not even have very weak split
pullbacks of split epimorphisms; see
Example~\ref{no-very-weak-split-pullback-pair-example}.
 It is also clear from
Example~\ref{no-very-weak-split-pullback-pair-example}
(take $j=n$) that the category $\sA$ does not have (even weak) kernel
pairs of split epimorphisms.
\end{rem}

 Let $\cP$ be a QE\+epi class in a category~$\sC$.
 For every morphism $p\:D\rarrow E$ belonging to $\cP$, consider its
kernel pair $k_1$, $k_2\:K\rightrightarrows D$.
 By a \emph{$\cP$\+sequence} we mean a diagram
$$
 \xymatrix{
  K \ar@<2pt>[r]^{k_1}\ar@<-2pt>[r]_{k_2} & D \ar[r]^p & E
 } 
$$
arising from some morphism $p\in\cP$ in this way.

 Let $\sA$ be a $\kappa$\+accessible category.
 We will say that a QE\+epi class $\cP$ in $\sA$ is \emph{locally
$\kappa$\+coherent} if the $\cP$\+sequences in $\sA$ are precisely all
the $\kappa$\+directed colimits of $\cP$\+sequences in $\sA$ with
all the three terms $K$, $D$, $E$ belonging to~$\sA_{<\kappa}$.
 Here the $\kappa$\+directed colimits are taken in the diagram
category~$\sA^{J^\sop}$, where $J$ is the finite category defined
in Section~\ref{QE-mono-classes-secn}.
 The terminology ``locally $\kappa$\+coherent'' comes from
the paper~\cite[Section~1]{Plce}.

\begin{lem} \label{loc-coh-QE-epi-classes-directed-colimit-closed}
 For any locally $\kappa$\+coherent QE\+epi class $\cP$ in
a $\kappa$\+accessible category\/ $\sA$, the class of all
$\cP$\+sequences is closed under $\kappa$\+directed colimits
in\/~$\sA^{J^\sop}$.
\end{lem}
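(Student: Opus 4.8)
The plan is to argue by formal duality with Lemma~\ref{loc-coh-QE-mono-classes-directed-colimit-closed}, replacing the diagram shape $J$ by its opposite~$J^{\sop}$. The first step is to check that Proposition~\ref{rigid-finite-diagram-category-accessible} applies to $D=J^{\sop}$. The category $J$ has only the nonidentity morphisms $1\rarrow2$, the parallel pair $2\rightrightarrows3$, and $1\rarrow3$, none of which is a nonidentity endomorphism; reversing all the arrows produces no new endomorphisms, so $J^{\sop}$ is again a finite category in which all endomorphisms of objects are identities. Hence Proposition~\ref{rigid-finite-diagram-category-accessible} shows that the diagram category $\sA^{J^{\sop}}$ is $\kappa$\+accessible, and that the $\kappa$\+presentable objects of $\sA^{J^{\sop}}$ are precisely the $J^{\sop}$\+shaped diagrams all of whose terms are $\kappa$\+presentable in~$\sA$.

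Next I would let $\sT\subset\sA^{J^{\sop}}$ be a set of representatives of the isomorphism classes of those $\cP$\+sequences $K\rightrightarrows D\rarrow E$ whose three terms $K$, $D$, $E$ all belong to~$\sA_{<\kappa}$. Since $\sA_{<\kappa}$ is essentially small (its objects are the retracts of a set of objects), so is $(\sA_{<\kappa})^{J^{\sop}}$, and therefore $\sT$ is a genuine set consisting, by the previous paragraph, of $\kappa$\+presentable objects of~$\sA^{J^{\sop}}$. The hypothesis that $\cP$ is locally $\kappa$\+coherent says exactly that the class of all $\cP$\+sequences in $\sA$ coincides with $\varinjlim_{(\kappa)}\sT$, the class of $\kappa$\+directed colimits, formed in~$\sA^{J^{\sop}}$, of the objects of~$\sT$. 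Applying Proposition~\ref{accessible-subcategory} to the $\kappa$\+accessible category $\sA^{J^{\sop}}$ and this set $\sT$, I conclude that the full subcategory $\varinjlim_{(\kappa)}\sT\subset\sA^{J^{\sop}}$ is closed under $\kappa$\+directed colimits; identifying it with the class of all $\cP$\+sequences then yields the assertion.

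I do not anticipate a genuine obstacle, since the argument is the exact dual of the one used for Lemma~\ref{loc-coh-QE-mono-classes-directed-colimit-closed} and relies only on the already established Propositions~\ref{rigid-finite-diagram-category-accessible} and~\ref{accessible-subcategory}. The one place that deserves explicit verification is that passing from $J$ to $J^{\sop}$ does not introduce nonidentity endomorphisms, so that Proposition~\ref{rigid-finite-diagram-category-accessible} remains applicable; this is immediate from the explicit description of~$J$ recalled above.
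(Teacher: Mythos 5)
Your argument is correct and coincides with the paper's proof, which simply invokes Proposition~\ref{rigid-finite-diagram-category-accessible} for the shape $J^{\sop}$ and then Proposition~\ref{accessible-subcategory} applied to the $\kappa$\+accessible category $\sA^{J^{\sop}}$ and the set of $\cP$\+sequences with $\kappa$\+presentable terms, exactly as you do. Your explicit check that $J^{\sop}$ has no nonidentity endomorphisms is a detail the paper leaves implicit (it just says the proof is ``completely similar'' to the QE\+mono case), but it is the right thing to verify.
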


\begin{proof}
 This is completely similar to
Lemma~\ref{loc-coh-QE-mono-classes-directed-colimit-closed}.
\end{proof}

 Notice the difference between the formulations of the following lemma
and its version for QE\+mono classes
(Lemma~\ref{loc-coh-QE-mono-classes-trivial-characterization} above).

\begin{lem} \label{loc-coh-QE-epi-classes-trivial-characterization}
 A QE\+epi class $\cP$ in a $\kappa$\+accessible category\/ $\sA$ is
locally $\kappa$\+coherent if and only if the following two conditions
hold:
\begin{enumerate}
\item $\cP$ is precisely the class of all $\kappa$\+directed colimits
of the morphisms from $\cP$ whose domains and codomains are
$\kappa$\+presentable.
 Here the $\kappa$\+directed colimits are taken in the category of
morphisms\/~$\sA^\to$.
\item For any morphism $p\in\cP$ whose domain and codomain are
$\kappa$\+presentable, the domain of the kernel pair of~$p$ is
$\kappa$\+presentable as well.
\end{enumerate}
\end{lem}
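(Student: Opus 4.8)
The plan is to follow the proof of Lemma~\ref{loc-coh-QE-mono-classes-trivial-characterization} as closely as possible, isolating the one genuine difference between the two settings. In an $\cM$\+sequence the third term is a cokernel \emph{pair}, hence a colimit, and so lands in $\sA_{<\kappa}$ automatically once the first two terms do (Lemma~\ref{small-colimits-preserved}); this is why no analogue of condition~(2) appears for QE\+mono classes. In a $\cP$\+sequence the third term $K$ is a kernel pair, hence a \emph{limit}, and Lemma~\ref{small-colimits-preserved} gives no control over it; condition~(2) is precisely the missing input. Throughout I use that $\sA^{J^\sop}$ and $\sA^\to$ are $\kappa$\+accessible with termwise\+computed $\kappa$\+directed colimits and pointwise\+characterized $\kappa$\+presentable objects (Proposition~\ref{rigid-finite-diagram-category-accessible} for $D=J^\sop$ and $D=I$), together with the standard fact that a $\kappa$\+presentable object presented as a $\kappa$\+directed colimit is a retract of one of the terms.

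First I would deduce~(2) from local $\kappa$\+coherence. Let $p\:D\rarrow E$ be in $\cP$ with $D$, $E\in\sA_{<\kappa}$. Local $\kappa$\+coherence presents the $\cP$\+sequence of~$p$ as a $\kappa$\+directed colimit in $\sA^{J^\sop}$ of $\cP$\+sequences $K_i\rightrightarrows D_i\rarrow E_i$ all of whose terms are $\kappa$\+presentable; restricting to the morphisms $D\rarrow E$ exhibits $p=\varinjlim_i p_i$ in $\sA^\to$, with $p_i\:D_i\rarrow E_i$ in $\cP\cap\sA_{<\kappa}^\to$. Since $p$ is $\kappa$\+presentable in $\sA^\to$, it is a retract of some~$p_i$ there. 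The formation of kernel pairs is functorial on morphisms that admit them, and functors preserve retracts; hence the kernel pair $K$ of $p$ is a retract of the kernel pair $K_i$ of~$p_i$. As $K_i\in\sA_{<\kappa}$ and $\sA_{<\kappa}$ is retract\+closed, $K\in\sA_{<\kappa}$, which is~(2). Condition~(1) now follows: one inclusion is the restriction to underlying morphisms just performed, and conversely, if $p=\varinjlim_i p_i$ with $p_i\in\cP\cap\sA_{<\kappa}^\to$, then~(2) makes each kernel pair $K_i$ $\kappa$\+presentable, so the $\cP$\+sequences of the~$p_i$ form a $\kappa$\+directed system in $\sA^{J^\sop}$; its colimit is a $\cP$\+sequence by local $\kappa$\+coherence, and its underlying morphism is~$p$, so $p\in\cP$.

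For the converse I would assume~(1) and~(2) and prove the two inclusions defining local $\kappa$\+coherence, the decisive tool being Lemma~\ref{directed-colimits-commute-with-small-limits}. Given a $\kappa$\+directed colimit $\varinjlim_i(K_i\rightrightarrows D_i\rarrow E_i)$ of $\cP$\+sequences with $\kappa$\+presentable terms, its underlying morphism $p=\varinjlim_i p_i$ lies in~$\cP$ by~(1); as $p$ has a kernel pair by~(i$^*$) and this kernel pair is a $\kappa$\+small limit, Lemma~\ref{directed-colimits-commute-with-small-limits} identifies it with $\varinjlim_i K_i$, so the termwise colimit is exactly the $\cP$\+sequence of~$p$. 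Conversely, for an arbitrary $\cP$\+sequence of some $p\in\cP$, condition~(1) writes $p=\varinjlim_i p_i$ with $p_i\in\cP\cap\sA_{<\kappa}^\to$, condition~(2) makes each $K_i$ $\kappa$\+presentable, and the same commutation identifies the kernel pair $K$ of $p$ with $\varinjlim_i K_i$; thus the given $\cP$\+sequence is the termwise $\kappa$\+directed colimit of the $\cP$\+sequences of the~$p_i$.

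I expect the crux to be establishing~(2) in the forward direction: the bare colimit presentation supplied by local $\kappa$\+coherence must be promoted into genuine control over the kernel pair~$K$, and the efficient route is the retract argument above, which relies on $\kappa$\+presentability of~$p$ in $\sA^\to$ and on functoriality of the kernel\+pair construction. The remaining steps are the routine interaction between termwise $\kappa$\+directed colimits in diagram categories and the commutation of $\kappa$\+directed colimits with $\kappa$\+small limits.
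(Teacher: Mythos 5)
Your proposal is correct and follows essentially the same route as the paper: condition~(2) is extracted in the forward direction by noting that $p$, being $\kappa$\+presentable in $\sA^\to$, is a retract of one of the terms $p_i$ of the colimit presentation, so its kernel pair is a retract of the ($\kappa$\+presentable) kernel pair of~$p_i$; the converse rests on Lemma~\ref{directed-colimits-commute-with-small-limits}. The paper phrases the retract step via splitting of idempotents in $\sA$ and $\sA_{<\kappa}$ where you invoke functoriality of kernel-pair formation, but these are the same argument.
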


\begin{proof}
 The proof of this lemma only uses condition~(i$^*$) from the definition
of a QE\+epi class; conditions~(ii$^*$) and~(iii$^*$) are not used.
 Let $\cP$ be a locally $\kappa$\+coherent QE\+epi class in~$\sA$.
 To check condition~(2), assume that a morphism~$p$ in $\sA$ is
a $\kappa$\+directed colimit of some morphisms~$p_\xi$, the colimit
being taken in the category $\sA^\to$, that is
$p=\varinjlim_{\xi\in\Xi}^{\sA^\to}p_\xi$, where $\Xi$ is
a $\kappa$\+directed poset.
 If the domain and codomain of~$p$ are $\kappa$\+presentable, then
$p$~is a $\kappa$\+presentable object of $\sA^\to$, and it follows
that $p$~is a retract of one of the morphisms~$p_\xi$.
 Now if $p$~has a kernel pair $(k_1,k_2)$ in $\sA$ and $p_\xi$~has
a kernel pair $(k_{\xi,1},k_{\xi,2})$ in $\sA$, then $(k_1,k_2)$
is a retract of $(k_{\xi,1},k_{\xi,2})$.
 In particular, the domain $K$ of $(k_1,k_2)$ is a retract of
the domain $K_\xi$ of $(k_{\xi,1},k_{\xi,2})$.
 Hence if $K_\xi$ is $\kappa$\+presentable, then $K$ is
$\kappa$\+presentable, too.
 After condition~(2) is proved, condition~(1) becomes obvious.
 Conversely, if conditions (1) and~(2) hold then, in order to check
that $\cP$ is locally $\kappa$\+coherent, one needs to use the fact
that $\kappa$\+directed colimits preserve finite limits (in particular,
kernel pairs) in~$\sA$.
 This is Lemma~\ref{directed-colimits-commute-with-small-limits}.
\end{proof}

 Let $\sA$ be a $\kappa$\+accessible category.
 We will say that a locally $\kappa$\+coherent QE\+epi class $\cP$ in
$\sA$ is \emph{strongly locally $\kappa$\+coherent} if it satisfies
the following stronger version of condition~(2) from
Lemma~\ref{loc-coh-QE-epi-classes-trivial-characterization}:
\begin{enumerate}
\renewcommand{\theenumi}{\arabic{enumi}$'$}
\setcounter{enumi}{1}
\item for any pullback diagram~\eqref{QE-epi-defn-pullback-diagram}
in $\sA$ with $\kappa$\+presentable objects $D$, $E$, $E'$ and
a morphism $p\in\cP$, the object $D'$ is also $\kappa$\+presentable.
\end{enumerate}

\begin{rem}
 Under a natural additional assumption, any locally $\kappa$\+coherent
QE\+epi class in a $\kappa$\+accessible \emph{additive} category $\sA$
is strongly locally $\kappa$\+coherent.
 Indeed, given a morphism $p\:D\rarrow E$ with a kernel pair
$k_1$, $k_2\:K\rightrightarrows D$ in an idempotent-complete additive
category $\sA$, the kernel $k'\:K'\rarrow D$ of the morphism~$p$ can be
constructed as the image of a suitable idempotent endomorphism
$K\rarrow K$.
 In fact, one has $K\simeq D\oplus K'$.
 Given a morphism $f\:E'\rarrow E$, the pullback $D'$ of the pair
of morphisms $p\:D\rarrow E$ and $f\:E'\rarrow E$ can be constructed as
the kernel of the induced morphism $(p,f)\:D\oplus E'\rarrow E$.
 Hence, in the situation at hand, condition~(2) implies
$D'\in\sA_{<\kappa}$ provided that $D$, $E$, $E'\in\sA_{<\kappa}$ and
$(p,f)\in\cP$.

 It remains to make sure that $(p,f)\in\cP$ whenever $p\in\cP$.
 Denoting by $i_D\:D\rarrow D\oplus E'$ the direct summand injection,
we have $(p,f)\circ i_D=p$.
 By the pullback axiom~(i$^*$) above, the pullback $D'$ of
the pair of morphisms $p$ and~$f$ exists in~$\sA$; so
the morphism~$(p,f)$ has a kernel in~$\sA$.
 Assuming the axiom dual to~\cite[axiom~{[R3]} from Definition~3.2]{BC},
or which is the same, \cite[axiom~R3 from Definition~2.3]{HR}, it
follows that $(p,f)\in\cP$ whenever $p\in\cP$.
 See Section~\ref{strong-QE-epi-classes-secn} below for a further
discussion.

 We are not aware of any example of a locally $\kappa$\+coherent
QE\+epi class (in any $\kappa$\+accessible category)
that is not strongly locally $\kappa$\+coherent.
\end{rem}

\begin{prop} \label{restriction-of-loc-coh-to-presentables-is-QE-epi}
 Let $\cP$ be a locally $\kappa$\+coherent QE\+epi class in
a $\kappa$\+accessible category\/~$\sA$.
 Then the class $\cP\cap\sA_{<\kappa}^\to$ of all morphisms from $\cP$
with $\kappa$\+presentable domains and codomains is closed under
retracts in the category\/~$\sA_{<\kappa}^\to$.
 The locally $\kappa$\+coherent QE\+epi class $\cP$ is strongly locally
$\kappa$\+coherent if and only if the class $\cP\cap\sA_{<\kappa}^\to$
is a QE\+epi class in the category\/~$\sA_{<\kappa}$.
\end{prop}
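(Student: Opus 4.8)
The plan is to set $\cQ=\cP\cap\sA_{<\kappa}^\to$ and to check the three QE\+epi axioms for $\cQ$ in $\sA_{<\kappa}$ one at a time, keeping careful track of where the strong local coherence hypothesis actually enters. The organizing principle is the asymmetry with the QE\+mono case (Proposition~\ref{restriction-of-loc-coh-to-presentables-is-QE-mono}): there, axiom~(i) concerns pushouts, and $\sA_{<\kappa}$ is automatically closed under the pushouts existing in $\sA$ by Lemma~\ref{small-colimits-preserved}, so no extra hypothesis is needed. Here axiom~(i$^*$) concerns pullbacks, under which $\sA_{<\kappa}$ need \emph{not} be closed, and this is precisely the gap that condition~(2$'$) is designed to fill.

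First I would dispatch the retraction-closedness claim together with axioms~(ii$^*$) and~(iii$^*$), all of which require only local coherence. Since $\cP$ is locally $\kappa$\+coherent, condition~(1) of Lemma~\ref{loc-coh-QE-epi-classes-trivial-characterization} shows $\cP$ is closed under $\kappa$\+directed colimits in $\sA^\to$; as retracts are degenerate $\kappa$\+directed colimits \cite[Observation~2.4]{AR}, any retract in $\sA_{<\kappa}^\to$ of a morphism from $\cQ$ again lies in $\cP$ and has $\kappa$\+presentable domain and codomain, hence lies in $\cQ$. Axiom~(iii$^*$) for $\cQ$ is immediate, as identities and composites of morphisms between $\kappa$\+presentable objects retain $\kappa$\+presentable domain and codomain. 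For axiom~(ii$^*$), I would invoke condition~(2) of Lemma~\ref{loc-coh-QE-epi-classes-trivial-characterization}: for $p\in\cQ$ the kernel pair $K\rightrightarrows D$ formed in $\sA$ has $K\in\sA_{<\kappa}$, and since a limit in $\sA$ with $\kappa$\+presentable vertex automatically satisfies the universal property inside the full subcategory $\sA_{<\kappa}$, this is also the kernel pair in $\sA_{<\kappa}$. That $p$ is the coequalizer of this kernel pair in $\sA_{<\kappa}$ then follows from axiom~(ii$^*$) for $\cP$ together with the first assertion of Lemma~\ref{small-colimits-preserved}, since $\sA_{<\kappa}$ is closed under the $\kappa$\+small colimits existing in~$\sA$.

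The heart of the argument is the equivalence of axiom~(i$^*$) for $\cQ$ with strong local coherence. Given a cospan $p\in\cQ$ and $f\:E'\rarrow E$ with $E'\in\sA_{<\kappa}$, axiom~(i$^*$) for $\cP$ supplies the pullback square~\eqref{QE-epi-defn-pullback-diagram} in $\sA$ with $p'\in\cP$; all of $D$, $E$, $E'$ are $\kappa$\+presentable, so the entire content of axiom~(i$^*$) for $\cQ$ is the single assertion $D'\in\sA_{<\kappa}$. When $D'$ is $\kappa$\+presentable, the pullback formed in $\sA$ restricts to a pullback in $\sA_{<\kappa}$ and $p'$ then lies in $\cQ$; thus condition~(2$'$) yields axiom~(i$^*$) for $\cQ$. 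Conversely, if $\cQ$ is a QE\+epi class, then axiom~(i$^*$) for $\cQ$ produces a pullback $\bar D'\in\sA_{<\kappa}$ of the same cospan, which by Lemma~\ref{limits-preserved} is also a pullback in $\sA$; uniqueness of pullbacks then forces $D'\cong\bar D'\in\sA_{<\kappa}$, which is exactly condition~(2$'$).

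The step I expect to demand the most care is keeping the ambient category of each (co)limit straight while passing between $\sA$ and $\sA_{<\kappa}$. Two compatibility facts must be applied in opposite directions: ``a limit in $\sA$ with $\kappa$\+presentable vertex is a limit in $\sA_{<\kappa}$'' (used to transport kernel pairs and pullbacks downward), and, via Lemma~\ref{limits-preserved}, ``a limit in $\sA_{<\kappa}$ is a limit in $\sA$'' (used in the converse direction together with uniqueness). No individual verification is difficult, but it is essential to confirm that condition~(2)---which only governs kernel pairs, i.e.\ pullbacks of the cospan $(p,p)$---genuinely fails to control arbitrary pullbacks, so that the stronger condition~(2$'$) is not redundant and the stated equivalence is sharp.
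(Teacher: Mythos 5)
Your proposal is correct and follows essentially the same route as the paper's proof: retraction\+closedness via the fact that retracts are special $\kappa$\+directed colimits, axioms~(ii$^*$) and~(iii$^*$) by transporting kernel pairs and coequalizers between $\sA$ and the full subcategory $\sA_{<\kappa}$, and the equivalence of axiom~(i$^*$) for $\cP\cap\sA_{<\kappa}^\to$ with condition~(2$'$) via Lemma~\ref{limits-preserved} in one direction and the restriction of pullbacks to $\sA_{<\kappa}$ in the other. Your additional bookkeeping observation that (ii$^*$) and retraction\+closedness need only local (not strong local) coherence is a correct minor refinement of the paper's presentation, which formally folds the verification of~(ii$^*$) into the direction that assumes strong local coherence.
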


\begin{proof}
 The first assertion is provable similarly to the proof of
the second assertion of
Proposition~\ref{restriction-of-loc-coh-to-presentables-is-QE-mono}.
 Let us prove the second assertion.
 ``If'': assuming that the class $\cP\cap\sA_{<\kappa}^\to$
satisfies condition~(i$^*$) in the category $\sA_{<\kappa}$,
condition~(2$'$) for the class $\cP$ in $\sA$ follows, because all
limits (in particular, pullbacks) that exist in $\sA_{<\kappa}$ are
also limits in $\sA$ by Lemma~\ref{limits-preserved}.
 ``Only if'': condition~(iii$^*$) obviously holds for
$\cP\cap\sA_{<\kappa}^\to$ whenever it holds for~$\cP$.
 Condition~(i$^*$) holds for $\cP\cap\sA_{<\kappa}^\to$ whenever it
holds for $\cP$ \emph{and} $\cP$ is strongly locally $\kappa$\+coherent,
because any square diagram in $\sA_{<\kappa}$ that is a pullback
diagram in $\sA$ is also a pullback diagram in~$\sA_{<\kappa}$.
 Finally, condition~(ii$^*$) holds for $\cP\cap\sA_{<\kappa}^\to$ 
whenever conditions (i$^*$) and~(ii$^*$) hold for $\cP$ and $\cP$
is strongly locally $\kappa$\+coherent, because any diagram in
$\sA_{<\kappa}$ that is a coequalizer diagram in $\sA$ is also
a coequalizer diagram in~$\sA_{<\kappa}$.
\end{proof}

\Section{Construction of Strongly Locally Coherent QE-Epi Classes}
\label{QE-epi-construction-secn}

 Let $D$ be the finite category defined in
Section~\ref{QE-mono-construction-secn}.
 By \emph{the category of cospans} in a category $\sC$ we mean
the category of $D^\sop$\+shaped diagrams in $\sC$, i.~e., the category
of functors~$\sC^{D^\sop}$.

\begin{thm} \label{direct-limit-closure-of-QE-epi-theorem}
 Let\/ $\sA$ be a $\kappa$\+accessible category and $\cQ$ be
a QE\+epi class in the category\/~$\sA_{<\kappa}$.
 Then the class\/ $\varinjlim_{(\kappa)}\cQ\subset\sA^\to$ of all
$\kappa$\+directed colimits of morphisms from $\cQ$ (the colimits
being taken in\/~$\sA^\to$) is a strongly locally $\kappa$\+coherent
QE\+epi class in the category\/~$\sA$.
\end{thm}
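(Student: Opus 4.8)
The plan is to dualize the proof of Theorem~\ref{direct-limit-closure-of-QE-mono-theorem} throughout, replacing spans by cospans, pushouts by pullbacks, cokernel pairs by kernel pairs, and equalizers by coequalizers, while keeping track of the one feature that does \emph{not} dualize for free: the full subcategory $\sA_{<\kappa}$ is closed under pushouts but not under pullbacks in $\sA$ in general. Write $\cP=\varinjlim_{(\kappa)}\cQ$. As in the QE\+mono case, I would first verify conditions~(i$^*$), (ii$^*$), and~(iii$^*$), and only then address (strong) local coherence. For condition~(i$^*$), let $\sP\subset\sA^\to$ be the full subcategory of all morphisms belonging to $\cP$; by Proposition~\ref{accessible-subcategory} applied to the $\kappa$\+accessible category $\sA^\to$, the category $\sP$ is $\kappa$\+accessible with $\kappa$\+presentable objects the retracts of the morphisms belonging to~$\cQ$. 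Taking $F=\id\:\sA\rarrow\sA$ and $G\:\sP\rarrow\sA$ the functor sending a morphism to its codomain, the comma category $F\down G$ is precisely the category of cospans $(f,p)$ in $\sA$ with $p\in\cP$. By Propositions~\ref{comma-category-accessible} and~\ref{comma-category-objects-directed-colimits}, every such cospan is a $\kappa$\+directed colimit, in the cospan category $\sA^{D^\sop}$, of cospans $(g,q)$ in $\sA_{<\kappa}$ with $q\in\cQ$. For each such $(g,q)$, condition~(i$^*$) for $\cQ$ provides a pullback in $\sA_{<\kappa}$, which by Lemma~\ref{limits-preserved} is also a pullback in $\sA$; since $\kappa$\+directed colimits commute with pullbacks by Lemma~\ref{directed-colimits-commute-with-small-limits}, the pullback of $(f,p)$ exists and is the colimit of these, and as the base-change morphisms all lie in $\cQ$, their colimit lies in $\cP$. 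I would record here the key observation that the pullback of a cospan $(g,q)$ with $\kappa$\+presentable objects and $q\in\cQ$ stays $\kappa$\+presentable.

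Condition~(ii$^*$) is dual to condition~(ii) of Theorem~\ref{direct-limit-closure-of-QE-mono-theorem}: every $q\in\cQ$ is the coequalizer of its kernel pair in $\sA_{<\kappa}$, the kernel pair computed in $\sA_{<\kappa}$ agrees with the one in $\sA$ (Lemma~\ref{limits-preserved}), and the coequalizer computed in $\sA_{<\kappa}$ agrees with the one in $\sA$ (Lemma~\ref{small-colimits-preserved}); writing $p\in\cP$ as a $\kappa$\+directed colimit of such $q$, one invokes Lemma~\ref{directed-colimits-commute-with-small-limits} (directed colimits preserve kernel pairs) together with the commutation of colimits with coequalizers to conclude that $p$ is the coequalizer of its kernel pair. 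For condition~(iii$^*$), identity morphisms are $\kappa$\+directed colimits of identities, and closure under composition is proved by dualizing the diagram chase in Theorem~\ref{direct-limit-closure-of-QE-mono-theorem}: given composable $p'$, $p''\in\cP$ and a morphism into their composite from a $\kappa$\+presentable $t\in(\sA_{<\kappa})^\to$, I would factor it through $\cQ$ in two successive stages via Proposition~\ref{accessible-subcategory}, and then combine the two resulting $\cQ$\+morphisms using a \emph{pullback} in place of the pushout of the mono case. This pullback exists and lies in $\cQ$ by condition~(i$^*$) for $\cQ$, remains a pullback in $\sA$ by Lemma~\ref{limits-preserved}, and the composite of the two $\cQ$\+morphisms lies in $\cQ$ by condition~(iii$^*$) for $\cQ$.

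Finally, for strong local coherence I would verify condition~(2$'$) directly. Given a pullback square in $\sA$ with $\kappa$\+presentable objects $D$, $E$, $E'$ and a morphism $p\in\cP$, the cospan $(f,p)$ is a $\kappa$\+presentable object of $\sA^{D^\sop}$ by Proposition~\ref{rigid-finite-diagram-category-accessible}, hence a retract of one of the $\cQ$\+cospans $(g,q)$ appearing in its colimit presentation from the proof of~(i$^*$). Since passing to the pullback is functorial on cospans (where the relevant pullbacks exist, by condition~(i$^*$) for $\cQ$ and for $\cP$) and functors preserve retracts, the pullback $D'$ of $(f,p)$ is a retract of the pullback of $(g,q)$, which is $\kappa$\+presentable by the observation above; as $\sA_{<\kappa}$ is closed under retracts~\cite[Observation~2.4]{AR}, it follows that $D'\in\sA_{<\kappa}$. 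This establishes condition~(2$'$), hence a fortiori condition~(2) of Lemma~\ref{loc-coh-QE-epi-classes-trivial-characterization}; together with condition~(1) of that lemma --- which holds because $\cP=\varinjlim_{(\kappa)}\cQ$ coincides with the $\kappa$\+directed colimits of its members having $\kappa$\+presentable domains and codomains (these being exactly the retracts of $\cQ$, by Proposition~\ref{accessible-subcategory}) --- this shows, via Lemmas~\ref{loc-coh-QE-epi-classes-directed-colimit-closed} and~\ref{loc-coh-QE-epi-classes-trivial-characterization}, that $\cP$ is locally $\kappa$\+coherent, and condition~(2$'$) upgrades this to strong local $\kappa$\+coherence.

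The main obstacle, and the place where the argument genuinely departs from the QE\+mono case, is precisely the failure of $\sA_{<\kappa}$ to be closed under pullbacks in $\sA$. Condition~(i$^*$) for $\cQ$ is exactly what rescues the situation, forcing base changes of $\cQ$\+morphisms between $\kappa$\+presentable objects to remain $\kappa$\+presentable; it is this single feature that makes both the construction of the pullbacks in~(i$^*$) and the $\kappa$\+presentability estimate in~(2$'$) go through, and that explains why the dual construction yields \emph{strongly} locally $\kappa$\+coherent classes rather than merely locally $\kappa$\+coherent ones.
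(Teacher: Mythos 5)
Your proposal is correct and follows essentially the same route as the paper's own proof: the same comma-category argument (with $F=\id_\sA$ and $G$ the codomain functor on the subcategory of $\cP$-morphisms) to present cospans over $\cP$ as $\kappa$-directed colimits of cospans over $\cQ$, the same use of Lemmas~\ref{limits-preserved}, \ref{small-colimits-preserved}, and~\ref{directed-colimits-commute-with-small-limits} for conditions~(i$^*$) and~(ii$^*$), the same two-stage factorization with a pullback for~(iii$^*$), and the same retract argument for condition~(2$'$). You also correctly isolate the one genuinely non-self-dual point --- that $\sA_{<\kappa}$ is closed under finite colimits but not finite limits in $\sA$, so that condition~(i$^*$) for $\cQ$ is what keeps the pullbacks $\kappa$-presentable --- which is exactly the role it plays in the paper.
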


\begin{proof}
 The argument is largely similar to the proof of
Theorem~\ref{direct-limit-closure-of-QE-mono-theorem}, but there
are some differences.
 Put $\cP=\varinjlim_{(\kappa)}\cQ$.
 In order to check that condition~(i$^*$) for the class $\cQ$ in
$\sA_{<\kappa}$ implies condition~(i$^*$) for the class $\cP$ in~$\sA$,
let us show that all cospans $(p,f)$ in $\sA$ (where $p\:D\rarrow E$
and $f\:E'\rarrow E$) such that $p\in\cP$, are $\kappa$\+directed
colimits, in the category of cospans in~$\sA$, of cospans $(q,g)$
in $\sA_{<\kappa}$ such that $q\in\cQ$.

 As usual, we keep in mind the fact that the category of morphisms
$\sA^\to$ is $\kappa$\+accessible, and its $\kappa$\+presentable objects
are precisely all the morphisms with $\kappa$\+presentable domains and
codomains.
 Let $\sP$ be the full subcategory in $\sA^\to$ whose objects are all
the morphisms belonging to~$\cP$.
 By Proposition~\ref{accessible-subcategory} applied to the category
$\sA^\to$, the category $\sP$ is $\kappa$\+accessible, and its
$\kappa$\+presentable objects are precisely all the retracts of
the morphisms belonging to~$\cQ$.
 Now let $F\:\sA\rarrow\sA$ be the identity functor, and let
$G\:\sP\rarrow\sA$ be the functor taking every morphism $p\:D\rarrow E$
to its codomain~$E$.
 Then the category $F\down G$ defined in
Section~\ref{preliminaries-secn} is precisely the category of all
cospans $(p,f)$ in $\sA$ with $p\in\cP$.
 By Proposition~\ref{comma-category-accessible}, it follows that
the category $F\down G$ is $\kappa$\+accessible, and its
$\kappa$\+presentable objects are precisely all the cospans $(q',g')$
in $\sA_{<\kappa}$ such that $q'$~is a retract of a morphism
belonging to~$\cQ$.

 Moreover, let $\sS\subset\sA_{<\kappa}$ be a set of representatives of 
the isomorphism classes of $\kappa$\+presentable objects of $\sA$, and
let $\sT\subset\sP_{<\kappa}$ be a set of representatives of
the isomorphism classes of morphisms belonging to~$\cQ$.
 Then Proposition~\ref{comma-category-objects-directed-colimits}
applied to the sets of $\kappa$\+presentable objects $\sS$ and $\sT$
tells us that all cospans $(p,f)\in\sA^{D^\sop}$ with $p\in\cP$ are
$\kappa$\+directed colimits, in the category of cospans~$\sA^{D^\sop}$,
of cospans $(q,g)\in(\sA_{<\kappa})^{D^\sop}$ with $q\in\cQ$
(as desired).

 By Lemma~\ref{limits-preserved}, any limit in $\sA_{<\kappa}$ is also
a limit in~$\sA$.
 In particular, this applies to pullbacks.
 So pullbacks of the cospans $(q,g)$ in $\sA_{<\kappa}$ remain pullbacks
in~$\sA$.
 Since $\kappa$\+directed colimits in $\sA$ preserve pullbacks
by Lemma~\ref{directed-colimits-commute-with-small-limits}, we
have shown that condition~(i$^*$) for $\cQ$ in $\sA_{<\kappa}$ implies
condition~(i$^*$) for $\cP$ in~$\sA$.

 The proof of the assertion that conditions~(i$^*$) and~(ii$^*$) for
$\cQ$ in $\sA_{<\kappa}$ imply condition~(ii$^*$) for $\cP$ in $\sA$ is
somewhat similar, and based on the arguments above together with
Lemmas~\ref{small-colimits-preserved}
and~\ref{effective-epimorphisms-lemma}.
 By assumption, every morphism $q\in\cQ$ is the coequalizer in
$\sA_{<\kappa}$ of its kernel pair in~$\sA_{<\kappa}$.
 We have already seen that the kernel pair of~$q$ in~$\sA_{<\kappa}$
is also the kernel pair of~$q$ in~$\sA$.
 By the second assertion of Lemma~\ref{small-colimits-preserved}, any
finite colimit that exists in $\sA_{<\kappa}$ is also a colimit
in~$\sA$.
 Therefore, $q$~is the coequalizer in $\sA$ of its kernel pair in~$\sA$.
 So $q$~is an effective epimorphism in~$\sA$.
 Now any morphism $p\in\cP$ is a $\kappa$\+directed colimit in $\sA^\to$
of morphisms $q\in\cQ$, and $\kappa$\+directed colimits of effective
epimorphisms in $\sA$ are effective epimorphisms in $\sA$ by
Lemma~\ref{effective-epimorphisms-lemma}(b).

 Let us prove that conditions~(i$^*$) and~(iii$^*$) for $\cQ$ in
$\sA_{<\kappa}$ imply condition~(iii$^*$) for $\cP$ in~$\sA$.
 The assertion concerning the identity morphisms is obvious.
 We need to show that the class $\cP$ is closed under compositions.
 Let $p'\:C\rarrow D$ and $p''\:D\rarrow E$ be two morphisms belonging
to~$\cP$.
 By Proposition~\ref{accessible-subcategory}, in order to prove that
the composition $p=p''\circ p'$ belongs to $\cP$, we need to check that
every morphism $t\rarrow p$ in $\sA^\to$ from a morphism~$t$ with
$\kappa$\+presentable domain and codomain, $t\in(\sA^\to)_{<\kappa}
=(\sA_{<\kappa})^\to$, into the morphism~$p$ factorizes as
$t\rarrow q\rarrow p$, where $q\in\cQ$.
 So we have a commutative diagram
$$
 \xymatrix{
  C \ar[r]^{p'} & D\ar[r]^{p''} & E \\
  T \ar[u]^c \ar[rr]_t && S \ar[u]_e
 }
$$
in $\sA$ with $\kappa$\+presentable objects $S$ and~$T$.

 Arguing exactly as in the proof of
Theorem~\ref{direct-limit-closure-of-QE-mono-theorem}, we construct
a commutative diagram similar
to~\eqref{common-diagram-for-two-proofs-I}.
 Let us redraw it here in our current notation:
\begin{equation} \label{common-diagram-for-two-proofs-II}
\begin{gathered}
 \xymatrix{
  C \ar[r]^{p'} & D\ar[r]^{p''} & E \\
  U \ar[r]^{v} \ar[u]_{c'} & V \ar[u]^{d'} \\
  & Y \ar@/_1pc/[uu]_d \ar[r]^{q''} \ar[u]^h & Z \ar[uu]_f \\
  T \ar@/^2pc/[uuu]^c \ar[rr]_t \ar[ru]_y \ar[uu]_u
  && S \ar[u]_g \ar@/_2pc/[uuu]_e
 }
\end{gathered}
\end{equation}
 So~\eqref{common-diagram-for-two-proofs-II} is a commutative diagram
in $\sA$ with $\kappa$\+presentable objects $S$, $T$, $U$, $V$, $X$,
and~$Y$, and morphisms $v$, $q''\in\cQ$.

 By condition~(i$^*$) for the class $\cQ$ in $\sA_{<\kappa}$,
the cospan $v\:U\rarrow V$, \ $h\:Y\rarrow V$ has a pullback in
$\sA_{<\kappa}$, which by Lemma~\ref{limits-preserved} is
also a pullback in~$\sA$.
 Denote the resulting pullback square by
$$
 \xymatrix{
  U \ar[r]^v & V \\
  X \ar@{..>}[u]_{h'} \ar@{..>}[r]^{q'} & Y \ar[u]^h
 }
$$
 Condition~(i$^*$) for the class $\cQ$ in $\sA_{<\kappa}$ also tells us
that $q'\in\cQ$ (since $v\in\cQ$).

 We have a pair of morphisms $u\:T\rarrow U$ and
$y\:T\rarrow Y$ such that $v\circ u=h\circ y$.
 Hence there exists a unique morphism $x\:T\rarrow X$ making
the diagram
$$
 \xymatrix{
  U\ar[r]^v & V \\
  X \ar[r]^{q'} \ar[u]_{h'} & Y \ar[u]^h \\
  T \ar@{..>}[u]^x \ar@/^1.5pc/[uu]^u \ar[ru]_y
 }
$$
commutative.
 We have arrived to the commutative diagram
$$
 \xymatrix{
  C \ar[r]^{p'} & D\ar[r]^{p''} & E \\
  U \ar[r]^v \ar[u]_{c'} & V \ar[u]^{d'} \\
  X \ar[r]^{q'} \ar[u]_{h'} & Y \ar[r]^{q''} \ar[u]^h & Z \ar[uu]^f \\
  T \ar@/^2pc/[uuu]^c \ar[rr]_t \ar[ru]_y \ar[u]^x
  && S \ar[u]_g \ar@/_2pc/[uuu]_e
 }
$$
proving that the morphism $t\rarrow p=p''\circ p'$ in $\sA^\to$
factorizes as $t\rarrow q\rarrow p$, where $q=q''\circ q'$.

 As both the morphisms $q'$ and~$q''$ belong to $\cQ$ by
construction, so does their composition $q''\circ q'$, by
condition~(iii$^*$) for the class $\cQ$ in~$\sA_{<\kappa}$.
 This finishes the proof of condition~(iii$^*$) for the class $\cP$
in~$\sA$.

 It remains to prove that the QE\+epi class $\cP$ is strongly locally
$\kappa$\+coherent.
 In fact, by Proposition~\ref{accessible-subcategory},
\,$\cP\cap\sA_{<\kappa}^\to$ is precisely the class of all retracts of
the morphisms from~$\cQ$ (the retracts being taken in the category
$\sA^\to$ or~$\sA_{<\kappa}^\to$).
 So the QE\+epi class $\cP$ in $\sA$ satisfies condition~(1) of
Lemma~\ref{loc-coh-QE-epi-classes-trivial-characterization} in view of
Lemma~\ref{loc-coh-QE-epi-classes-directed-colimit-closed}.
 To check condition~(2$'$) from Section~\ref{QE-epi-classes-secn}
for the class $\cP$, notice that, in view of the proof of
condition~(i$^*$) for the class $\cP$ above, every cospan
$(p,f)\in(\sA_{<\kappa})^{D^\sop}$ in $\sA_{<\kappa}$ with a morphism
$p\in\cP$ is a retract of a cospan $(q,g)\in(\sA_{<\kappa})^{D^\sop}$
in $\sA_{<\kappa}$ with a morphism $q\in\cQ$.
 Furthermore, the pullback of $(q,g)$ in $\sA_{<\kappa}$ is also
the pullback of $(q,g)$ in~$\sA$.
 It follows that the pullback of $(p,f)$ in $\sA$ is a retract of
the pullback of $(q,g)$, and it remains to point out that the full
subcategory $\sA_{<\kappa}\subset\sA$ is closed under retracts in~$\sA$.
\end{proof}

\begin{cor} \label{QE-epi-classes-bijection-cor}
 For any $\kappa$\+accessible category\/ $\sA$, there is a bijective
correspondence between strongly locally $\kappa$\+coherent QE\+epi 
classes in\/ $\sA$ and QE\+epi classes in the category\/~$\sA_{<\kappa}$
closed under retracts in\/~$\sA_{<\kappa}^\to$.
 The bijection assigns to every strongly locally $\kappa$\+coherent
QE\+epi class $\cP$ in\/ $\sA$ the retraction-closed QE\+epi class
$\cQ=\cP\cap\sA_{<\kappa}^\to$ in\/~$\sA_{<\kappa}$.
 Conversely, to every retraction-closed QE\+epi class $\cQ$ in\/
$\sA_{<\kappa}$, the strongly locally $\kappa$\+coherent QE\+epi class
$\cP=\varinjlim_{(\kappa)}\cQ$ in\/ $\sA$ is assigned.
\end{cor}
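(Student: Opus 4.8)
The plan is to mirror the proof of Corollary~\ref{QE-mono-classes-bijection-cor}, assembling the bijection out of the two constructions already in hand---Theorem~\ref{direct-limit-closure-of-QE-epi-theorem} and Proposition~\ref{restriction-of-loc-coh-to-presentables-is-QE-epi}---and then verifying that the two assignments are mutually inverse. First I would check that both maps are well defined. For a strongly locally $\kappa$\+coherent QE\+epi class $\cP$ in $\sA$, Proposition~\ref{restriction-of-loc-coh-to-presentables-is-QE-epi} shows that $\cQ=\cP\cap\sA_{<\kappa}^\to$ is a QE\+epi class in $\sA_{<\kappa}$ (this is the ``only if'' half of its second assertion, which is precisely where the hypothesis of strong local $\kappa$\+coherence is consumed) and that $\cQ$ is closed under retracts in $\sA_{<\kappa}^\to$ (the first assertion). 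Conversely, for a retraction-closed QE\+epi class $\cQ$ in $\sA_{<\kappa}$, Theorem~\ref{direct-limit-closure-of-QE-epi-theorem} shows that $\cP=\varinjlim_{(\kappa)}\cQ$ is a strongly locally $\kappa$\+coherent QE\+epi class in $\sA$.

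Next I would verify that the two round trips are identities. Starting from a strongly locally $\kappa$\+coherent $\cP$, condition~(1) of Lemma~\ref{loc-coh-QE-epi-classes-trivial-characterization} gives $\cP=\varinjlim_{(\kappa)}(\cP\cap\sA_{<\kappa}^\to)$, so passing down to $\sA_{<\kappa}$ and back up returns~$\cP$. Starting from a retraction-closed $\cQ$, I would apply Proposition~\ref{accessible-subcategory} to the $\kappa$\+accessible category $\sA^\to$, taking for $\sT$ a set of representatives of the isomorphism classes of morphisms in $\cQ$: the $\kappa$\+presentable objects of the full subcategory $\varinjlim_{(\kappa)}\cQ\subset\sA^\to$ are exactly the retracts of the morphisms from $\cQ$, and since $\cQ$ is retraction-closed these retracts are the morphisms of $\cQ$ themselves. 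Because $(\sA^\to)_{<\kappa}=(\sA_{<\kappa})^\to$, this says $(\varinjlim_{(\kappa)}\cQ)\cap\sA_{<\kappa}^\to=\cQ$, as was already noted at the end of the proof of Theorem~\ref{direct-limit-closure-of-QE-epi-theorem}.

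The step demanding the most care is not any individual calculation but the bookkeeping of the word ``strongly,'' which marks the genuine point of divergence from the QE\+mono case. In the QE\+mono bijection the restriction $\cM\cap\sA_{<\kappa}^\to$ is automatically a QE\+mono class for \emph{every} locally $\kappa$\+coherent $\cM$ (Proposition~\ref{restriction-of-loc-coh-to-presentables-is-QE-mono} needs no extra hypothesis), because pushouts of $\kappa$\+presentable objects are again $\kappa$\+presentable by Lemma~\ref{small-colimits-preserved}. For QE\+epi classes the dual statement fails: pullbacks of $\kappa$\+presentable objects need not be $\kappa$\+presentable, so validity of condition~(i$^*$) for $\cP\cap\sA_{<\kappa}^\to$ in $\sA_{<\kappa}$ hinges on condition~(2$'$), that is, on strong local $\kappa$\+coherence. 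Accordingly, I would take care that the correct domain of the bijection on the $\sA$ side is the class of \emph{strongly} locally $\kappa$\+coherent QE\+epi classes, and that each of the four facts above carries this qualifier in the right place.
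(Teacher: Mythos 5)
Your proposal is correct and follows essentially the same route as the paper: both directions of well-definedness come from Proposition~\ref{restriction-of-loc-coh-to-presentables-is-QE-epi} and Theorem~\ref{direct-limit-closure-of-QE-epi-theorem}, and the two round trips are handled by Lemma~\ref{loc-coh-QE-epi-classes-trivial-characterization}(1) and by Proposition~\ref{accessible-subcategory} applied to $\sA^\to$, exactly as in the paper's proof. Your additional remark pinpointing that strong local $\kappa$\+coherence is consumed precisely in verifying condition~(i$^*$) for $\cP\cap\sA_{<\kappa}^\to$ is accurate and consistent with the paper's own discussion of the asymmetry with the QE\+mono case.
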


\begin{proof}
 For every strongly locally $\kappa$\+coherent QE\+epi class $\cP$
in $\sA$, the class $\cQ=\cP\cap\sA_{<\kappa}^\to$ is
a retraction-closed QE\+epi class in $\sA_{<\kappa}$ by
Proposition~\ref{restriction-of-loc-coh-to-presentables-is-QE-epi}.
 For every QE\+epi class $\cQ$ in $\sA_{<\kappa}$, the class
$\cP=\varinjlim_{(\kappa)}\cQ$ is a strongly locally $\kappa$\+coherent
QE\+epi class in $\sA$ by
Theorem~\ref{direct-limit-closure-of-QE-epi-theorem}.
 For any strongly locally $\kappa$\+coherent QE\+epi class $\cP$
in $\sA$, one has $\cP=\varinjlim_{(\kappa)}(\cP\cap\sA_{<\kappa}^\to)$
by Lemma~\ref{loc-coh-QE-epi-classes-trivial-characterization}(1).
 For any retraction-closed QE\+epi class $\cQ$ in $\sA_{<\kappa}$,
one has $\cQ=(\varinjlim_{(\kappa)}\cQ)\cap\sA_{<\kappa}^\to$
by Proposition~\ref{accessible-subcategory}, as it was already
mentioned in the last paragraph of the proof of
Theorem~\ref{direct-limit-closure-of-QE-epi-theorem}.
\end{proof}

\begin{ex} \label{QE-epi-not-retraction-closed-example}
 Dually to Example~\ref{QE-mono-not-retraction-closed-example},
a QE\+epi class in an additive category $\sS$ with split idempotents
\emph{need not} be closed under retracts in general.
 Indeed, let $\sS=\k\vect$ be the category of finite-dimensional
vector spaces over a field~$\k$, and let $\cQ$ be the class of
all epimorphisms~$q$ in $\sS$ with the dimension of the kernel
$\dim_\k(\ker q)$ divisible by a fixed integer $n\ge2$.
 Then conditions~(i$^*$\+-iii$^*$) are satisfied for
the class $\cQ$ in the category~$\sS$, but $\cQ$ is \emph{not}
closed under retracts in the category~$\sS^\to$.
 Furthermore, one has $\sS=\sA_{<\aleph_0}$, where $\sA=\k\Vect$ is
the finitely accessible category of $\k$\+vector spaces.
 The class $\cP=\varinjlim_{(\kappa)}\cQ\subset\sA^\to$ consists of
all epimorphisms in~$\sA$.
\end{ex}

\Section{Strong QE-Epi Classes}  \label{strong-QE-epi-classes-secn}

 Let $\sC$ be a category.
 We will say that a QE\+epi class $\cP$ in~$\sC$ (as defined in
Section~\ref{QE-epi-classes-secn}) is a \emph{strong QE\+epi class}
if it satisfies the following additional condition:
\begin{enumerate}
\renewcommand{\theenumi}{\roman{enumi}$^*$}
\setcounter{enumi}{3}
\item If $p$, $q$ is a composable pair of morphisms in $\sC$ and
$p\circ q\in\cP$, then $p\in\cP$.
\end{enumerate}
 In the context of additive categories, our axiom~(iv$^*$) coincides
with~\cite[axiom~R3$^+$ from Definition~3.1]{HR}.
 A similar but slightly more general definition of a \emph{strongly left
exact} additive category can be found in~\cite[Definition~3.2]{BC}.

\begin{lem} \label{strong-QE-epi-classes-closed-under-retracts}
 Any strong QE\+epi class in a category\/ $\sC$ is closed under
retracts in the category\/~$\sC^\to$.
\end{lem}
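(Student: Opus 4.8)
The plan is to unwind what a retract in $\sC^\to$ amounts to, and then to realize the retract $p$ as the first factor of a composite that lies in $\cP$, so that the strong axiom~(iv$^*$) applies directly. The one nontrivial ingredient will be a pullback, used to produce an auxiliary morphism in $\cP$ over the correct base object.

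First I would spell out the data. Suppose a morphism $p\:D\rarrow E$ is a retract in $\sC^\to$ of a morphism $p'\:D'\rarrow E'$ belonging to $\cP$. This means we are given morphisms $i\:p\rarrow p'$ and $r\:p'\rarrow p$ in $\sC^\to$ with $r\circ i=\id_p$. Writing $i=(i_D,i_E)$ and $r=(r_D,r_E)$, the commutativity of these two squares reads $i_E\circ p=p'\circ i_D$ and $p\circ r_D=r_E\circ p'$, while the retraction identity $r\circ i=\id_p$ unpacks into $r_D\circ i_D=\id_D$ and $r_E\circ i_E=\id_E$.

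The key step is to invoke condition~(i$^*$) in order to form the pullback of $p'\in\cP$ along the morphism $i_E\:E\rarrow E'$. This pullback exists and produces an object $\bar D$ with projections $j\:\bar D\rarrow D'$ and $\bar p\:\bar D\rarrow E$ satisfying $p'\circ j=i_E\circ\bar p$; crucially, the morphism $\bar p$ is itself a pullback of $p'$ and hence lies in $\cP$. Now I would set $q=r_D\circ j\:\bar D\rarrow D$ and compute
$$
 p\circ q = p\circ r_D\circ j = r_E\circ p'\circ j = r_E\circ i_E\circ\bar p = \bar p,
$$
using in turn the commuting square for $r$, the pullback relation $p'\circ j=i_E\circ\bar p$, and finally $r_E\circ i_E=\id_E$. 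Thus $p\circ q=\bar p\in\cP$, and axiom~(iv$^*$) immediately yields $p\in\cP$, as required.

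The main point to get right is the choice of composite fed into axiom~(iv$^*$). The naive candidate $p\circ r_D=r_E\circ p'$ need \emph{not} belong to $\cP$, since $r_E$ is not assumed to lie in $\cP$ and the composition-closure condition~(iii$^*$) would require both factors to be in $\cP$. The pullback along $i_E$ is exactly the device that trades the ``oversized'' morphism $p'$ (over $E'$) for a morphism $\bar p$ over the correct base $E$ while preserving membership in $\cP$. Once that replacement is in place, the rest is the short diagram chase above, and I do not expect any further obstacle.
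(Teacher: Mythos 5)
Your proof is correct and follows essentially the same route as the paper: pull the given morphism of $\cP$ back along the codomain component of the section to obtain an auxiliary morphism $\bar p\in\cP$ over the right base, exhibit it as a composite with the retract $p$ as its second factor, and conclude by axiom~(iv$^*$). The only differences are notational, and your closing remark about why the naive composite $p\circ r_D$ does not suffice correctly identifies the reason the pullback step is needed.
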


\begin{proof}
 Let $p\:B\rarrow C$ be a morphism belonging to~$\cP$, and let
$q\:D\rarrow E$ be a retract of the morphism~$p$.
 So we have a commutative diagram
$$
 \xymatrix{
  D \ar[r]^q & E \\
  B \ar[r]^p \ar[u]_{s_D} & C \ar[u]^{s_E} \\
  D \ar[r]^q \ar[u]_{i_D} \ar@/^2pc/[uu]^{\id_D}
  & E \ar[u]^{i_E} \ar@/_2pc/[uu]_{\id_E}
 }
$$
 By condition~(i$^*$), there exists a pullback diagram
$$
 \xymatrix{
  B \ar[r]^p & C \\
  F \ar@{..>}[r]^{p'} \ar@{..>}[u]_j & E \ar[u]^{i_E}
 }
$$
in the category $\sC$, and the morphism $p'\:F\rarrow E$ belongs
to~$\cP$.
 Now we have commutative diagrams
$$
 \xymatrix{
  D \ar[r]^q & E \\
  B \ar[r]^p \ar[u]_{s_D} & C \ar[u]^{s_E} \\
  F \ar[r]^{p'} \ar[u]_j & E \ar[u]^{i_E} \ar@/_2pc/[uu]_{\id_E}
 }
 \qquad\qquad
 \xymatrix{
  D \ar[r]^q & E \\
  B \ar[u]^{s_D} \\
  F \ar[u]^j \ar[ruu]_{p'}
 }
$$
with $p'\in\cP$.
 By condition~(iv$^*$), it follows that $q\in\cP$.
\end{proof}

 The following proposition is a weak nonadditive version
of~\cite[Theorem~1.2]{HR}.

\begin{prop} \label{strong-QE-epi-classes-characterized}
 Let\/ $\sC$ be a category with finite coproducts and $\cP$ be
a QE\+epi class in\/~$\sC$.
 Then $\cP$ is a strong QE\+epi class in\/ $\sC$ if and only if
the following two conditions hold:
\begin{enumerate}
\item the class $\cP$ is closed under retracts in\/~$\sC^\to$;
\item for any three objects $A$, $B$, $C\in\sC$ and any two
morphisms $p\:A\rarrow C$ and $f\:B\rarrow C$ such that $p\in\cP$,
the induced morphism from the coproduct $(p,f)\:A\sqcup B\rarrow C$
also belongs to~$\cP$.
\end{enumerate}
\end{prop}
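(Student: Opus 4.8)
The plan is to prove both implications by exploiting the coproduct together with the two hypotheses, invoking Lemma~\ref{strong-QE-epi-classes-closed-under-retracts} for one half.

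For the ``only if'' direction, suppose $\cP$ is a strong QE\+epi class. Condition~(1) is then exactly the content of Lemma~\ref{strong-QE-epi-classes-closed-under-retracts}, so nothing further is needed there. For condition~(2), given $p\:A\rarrow C$ in $\cP$ and an arbitrary $f\:B\rarrow C$, I would form the induced morphism $(p,f)\:A\sqcup B\rarrow C$ and observe that $(p,f)\circ i_A=p$, where $i_A\:A\rarrow A\sqcup B$ is the coproduct injection. Since $p\in\cP$, axiom~(iv$^*$) applied to the composable pair $\bigl((p,f),\,i_A\bigr)$ immediately yields $(p,f)\in\cP$, as required.

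For the ``if'' direction, assume conditions~(1) and~(2), and let $q\:A\rarrow B$, $p\:B\rarrow C$ be a composable pair with $p\circ q\in\cP$; the goal is to deduce $p\in\cP$. The key idea is to realize $p$ as a retract, in the arrow category $\sC^\to$, of a morphism already known to lie in $\cP$. Applying condition~(2) to the morphism $p\circ q\in\cP$ together with the morphism $p\:B\rarrow C$ playing the role of $f$, I obtain that the induced morphism $(p\circ q,\,p)\:A\sqcup B\rarrow C$ belongs to~$\cP$. It then remains to exhibit $p$ as a retract of $(p\circ q,p)$.

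The retraction is built from the universal property of the coproduct, and this is the only step requiring care. On the codomains I take the identity of $C$ in both directions. On the domains I take the section $s=i_B\:B\rarrow A\sqcup B$ and the retraction $r=(q,\id_B)\:A\sqcup B\rarrow B$, i.e.\ the morphism determined by $r\circ i_A=q$ and $r\circ i_B=\id_B$. One checks $r\circ s=r\circ i_B=\id_B$, so these are genuine retraction data for the domains; that the two resulting squares in $\sC^\to$ commute amounts to the identities $(p\circ q,p)\circ i_B=p$ (top square) and $p\circ r=(p\circ q,p)$ (bottom square), the latter following from $p\circ r\circ i_A=p\circ q$ and $p\circ r\circ i_B=p$ by the uniqueness clause of the coproduct universal property. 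Thus $p$ is a retract of $(p\circ q,p)$ in $\sC^\to$, and condition~(1) gives $p\in\cP$. The main, and essentially only, obstacle is guessing the correct retraction $r=(q,\id_B)$; once it is written down, all verifications are formal consequences of the coproduct universal property.
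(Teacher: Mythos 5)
Your proposal is correct and follows essentially the same route as the paper: the ``only if'' direction uses Lemma~\ref{strong-QE-epi-classes-closed-under-retracts} for condition~(1) and axiom~(iv$^*$) applied to $(p,f)\circ i_A=p$ for condition~(2), while the ``if'' direction exhibits $p$ as a retract of $(p\circ q,p)$ in $\sC^\to$ via the section $i_B$ and the retraction $(q,\id_B)$, exactly as in the paper's diagram. No gaps.
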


\begin{proof}
 ``Only if'': condition~(1) holds by
Lemma~\ref{strong-QE-epi-classes-closed-under-retracts} (this
implication does not depend on the assumption of existence of
finite coproducts in~$\sC$).
 Condition~(2) follows from~(iv$^*$), since the composition
$A\rarrow A\sqcup B\rarrow C$ is equal to $p\in\cP$.

 ``If'': Let $q\:C\rarrow D$ and $p\:D\rarrow E$ be a pair of morphisms
in $\sC$ such that the composition $r=p\circ q$ belongs to~$\cP$.
 Then, by~(2), the morphism $(r,p)\:C\sqcup D\rarrow E$ belongs
to~$\cP$.
 It remains to point out that the morphism~$p$ is a retract of
the morphism~$(r,p)$, in view of commutativity of the diagram
$$
 \xymatrix{
  D \ar[r]^p & E \\
  C\sqcup D \ar[u]^{(q,\id_D)} \ar[r]^-{(r,p)} & E \ar@{=}[u] \\
  D \ar[u]^{i_D} \ar[r]^p & E \ar@{=}[u]
 }
$$
where $i_D\:D\rarrow C\sqcup D$ is the coproduct injection.
\end{proof}

\Section{Characterization of Strongly Locally Coherent
Strong~QE-Epi~Class}

 Let $\sA$ be a $\kappa$\+accessible category.
 By a strongly locally $\kappa$\+coherent strong QE\+epi class
in $\sA$ we mean a strong QE\+epi class $\cP$ in~$\sA$
(in the sense of Section~\ref{strong-QE-epi-classes-secn}) that is
strongly locally $\kappa$\+coherent as a QE\+epi class in~$\sA$
(in the sense of Section~\ref{QE-epi-classes-secn}).

\begin{prop} \label{slc-QE-epi-class-characterization-prop}
 Let\/ $\sA$ be a $\kappa$\+accessible category with finite coproducts
and $\cQ$ be a strong QE\+epi class in the category\/~$\sA_{<\kappa}$.
 Let $\cP=\varprojlim_{(\kappa)}\cQ\subset\sA^\to$ be the related
(strongly locally $\kappa$\+coherent) QE\+epi class in the category\/
$\sA$, as per
Theorem~\ref{direct-limit-closure-of-QE-epi-theorem}.
 Then a morphism $p\:D\rarrow E$ in\/ $\sA$ belongs to $\cP$ if and
only if, for every object $S\in\sA_{<\kappa}$ and every morphism
$e\:S\rarrow E$ there exists a commutative square diagram
\begin{equation} \label{slc-QE-epi-class-characterization-diagram}
\begin{gathered}
 \xymatrix{
  D \ar[r]^p & E \\
  T \ar@{..>}[u]^d \ar@{..>}[r]_q & S \ar[u]_e
 }
\end{gathered}
\end{equation}
with an object $T\in\sA_{<\kappa}$ and a morphism $q\in\cQ$.
\end{prop}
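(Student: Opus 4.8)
The plan is to reduce both implications to the factorization criterion provided by Proposition~\ref{accessible-subcategory}, applied to the $\kappa$\+accessible category $\sA^\to$ (whose $\kappa$\+presentable objects are exactly $(\sA_{<\kappa})^\to$) with the chosen set of $\kappa$\+presentable objects taken to be representatives of the morphisms in $\cQ$. Since $\cP=\varinjlim_{(\kappa)}\cQ$, that proposition says: $p\in\cP$ if and only if every morphism $t\rarrow p$ in $\sA^\to$ out of an object $t\in(\sA_{<\kappa})^\to$ factorizes as $t\rarrow q\rarrow p$ through some $q\in\cQ$. I would deduce the stated lifting characterization from this.

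For the ``only if'' direction, I expect the strong QE\+epi hypothesis to be unnecessary. Writing $p=\varinjlim_{\xi\in\Xi}q_\xi$ as a $\kappa$\+directed colimit in $\sA^\to$ of morphisms $q_\xi\:U_\xi\rarrow V_\xi$ in $\cQ$, so that $D=\varinjlim_\xi U_\xi$ and $E=\varinjlim_\xi V_\xi$ with colimit morphisms $x_\xi\:U_\xi\rarrow D$ and $z_\xi\:V_\xi\rarrow E$, I would use $\kappa$\+presentability of $S$ to factor a given $e\:S\rarrow E$ as $e=z_\xi\circ v$ for some $v\:S\rarrow V_\xi$. Pulling back $q_\xi$ along $v$ inside $\sA_{<\kappa}$ (possible by condition~(i$^*$) for $\cQ$) yields $q\:T\rarrow S$ in $\cQ$ with $T\in\sA_{<\kappa}$ and $w\:T\rarrow U_\xi$ satisfying $q_\xi\circ w=v\circ q$. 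Setting $d=x_\xi\circ w$ gives $p\circ d=z_\xi\circ q_\xi\circ w=z_\xi\circ v\circ q=e\circ q$, which is exactly the square~\eqref{slc-QE-epi-class-characterization-diagram}.

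For the ``if'' direction, I would start from a morphism $(c,e)\:t\rarrow p$ in $\sA^\to$, where $t\:T_0\rarrow S_0$ with $T_0,S_0\in\sA_{<\kappa}$, and factor it through $\cQ$. Applying the hypothesis to $e\:S_0\rarrow E$ produces a square with some $q_1\:T_1\rarrow S_0$ in $\cQ$ and $d\:T_1\rarrow D$ such that $p\circ d=e\circ q_1$. The decisive step is to absorb $t$ into a morphism of $\cQ$: since $\sA_{<\kappa}$ has finite coproducts (Lemma~\ref{small-colimits-preserved}) and $\cQ$ is a strong QE\+epi class, Proposition~\ref{strong-QE-epi-classes-characterized}(2) shows that the induced morphism $q=(q_1,t)\:T_1\sqcup T_0\rarrow S_0$ again belongs to $\cQ$, with $T_1\sqcup T_0\in\sA_{<\kappa}$. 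Then $t$ maps to $q$ via the coproduct injection $i_{T_0}\:T_0\rarrow T_1\sqcup T_0$ over $\id_{S_0}$ (because $q\circ i_{T_0}=t$), while $q$ maps to $p$ via the morphism $(d,c)\:T_1\sqcup T_0\rarrow D$ over $e$ (one checks $p\circ(d,c)=e\circ q$ by testing on each coproduct summand). Composing and testing on $T_0$ recovers $(c,e)$, giving the factorization $t\rarrow q\rarrow p$ through $\cQ$, whence $p\in\cP$.

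The main obstacle, and the only place where the full force of the hypotheses enters, is this coproduct combination in the ``if'' direction. The point is that the hypothesis supplies only a lifting square over the \emph{codomain} $S_0$ of $t$, not a factorization of $t$ itself; it is precisely the strong QE\+epi axiom (through Proposition~\ref{strong-QE-epi-classes-characterized}) together with the existence of finite coproducts that lets me enlarge $q_1$ to $(q_1,t)\in\cQ$ and thereby swallow $t$ while retaining a lift into $D$. The remaining verifications—commutativity of the two squares and that their composite equals $(c,e)$—are routine coproduct bookkeeping.
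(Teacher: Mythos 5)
Your proposal is correct and follows essentially the same route as the paper's proof: the ``only if'' direction by writing $p$ as a $\kappa$\+directed colimit of morphisms from $\cQ$, factoring $e$ through a colimit component, and pulling back inside $\sA_{<\kappa}$ via condition~(i$^*$); the ``if'' direction by reducing to the factorization criterion of Proposition~\ref{accessible-subcategory} for $\sA^\to$ and absorbing $t$ into $(q_1,t)\in\cQ$ via Proposition~\ref{strong-QE-epi-classes-characterized}(2) and Lemma~\ref{small-colimits-preserved}. Even your side remarks (that the strong QE\+epi axiom and finite coproducts are only needed for the ``if'' direction) match the paper.
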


\begin{proof}
 This is a nonadditive version of~\cite[Lemmas~1.5 and~2.3]{Plce}.
 ``Only if'': this implication does not depend on the assumption of
existence of finite coproducts in~$\sA$.
 The assumption that $\cQ$ is a strong QE\+epi class is not needed
for this implication, either; it is only important that the class
$\cQ$ satisfies condition~(i$^*$) in the category~$\sA_{<\kappa}$.

 Suppose that $p=\varinjlim_{\xi\in\Xi}^{\sA^\to}u_\xi$, where
$u_\xi\:X_\xi\rarrow Y_\xi$ are some morphisms belonging to $\cQ$
and $\Xi$ is a $\kappa$\+directed poset.
 Let $x_\xi\:X_\xi\rarrow D$ and $y_\xi\:Y_\xi\rarrow E$ be
the natural morphisms to the colimit.
 Then, since the object $S$ is $\kappa$\+accessible, the morphism
$e\:S\rarrow E$ factorizes through the morphism~$y_\xi$ for some
index $\xi\in\Xi$.
 So we have a morphism $v\:S\rarrow Y_\xi$ such that $e=y_\xi\circ v$.
 Put $X=X_\xi$, \ $Y=Y_\xi$, \ $x=x_\xi$, \ $y=y_\xi$, and $u=u_\xi$.

 Applying condition~(i$^*$) to the pair of morphisms
$u\:X\rarrow Y$ and $v\:S\rarrow Y$ in the category $\sA_{<\kappa}$,
with the morphism~$u$ belonging to $\cQ$, we obtain a pullback
diagram
$$
 \xymatrix{
  X \ar[r]^u & Y \\
  T \ar@{..>}[u]^{v'} \ar@{..>}[r]_q & S \ar[u]^v
 }
$$
in the category~$\sA_{<\kappa}$ with the morphism~$q$ belonging
to~$\cQ$.
 Now we have the commutative diagram
$$
 \xymatrix{
  D \ar[r]^p & E \\
  X \ar[u]^x \ar[r]^u & Y \ar[u]^y \\
  T \ar[u]^{v'} \ar[r]_q & S \ar[u]^v \ar@/_2pc/[uu]_e
 }
$$
leading to the desired commutative square
diagram~\eqref{slc-QE-epi-class-characterization-diagram}.

 ``If'': given a morphism $s\:U\rarrow S$ in $\sA_{<\kappa}$ and
a morphism $s\rarrow p$ in $\sA^\to$, we need to find a morphism
$r\in\cQ$ such that the morphism $s\rarrow p$ factorizes as
$s\rarrow r\rarrow p$ in~$\sA^\to$.
 So suppose we are given a commutative square diagram
$$
 \xymatrix{
  D \ar[r]^p & E \\
  U \ar[u]_u \ar[r]_s & S \ar[u]_e
 }
$$
in $\sA$ with $\kappa$\+presentable objects $U$ and~$S$.
 By assumption, we can extend the morphisms~$p$ and~$e$ to a commutative
square diagram~\eqref{slc-QE-epi-class-characterization-diagram} with
a morphism $q\in\cQ$.
 Now we have a commutative diagram
$$
 \xymatrix{
  D \ar[r]^p & E \\
  T\sqcup U \ar[u]_{(d,u)}\ar[r]_-{(q,s)} & S \ar[u]_e \\
  U \ar[u]_{i_U} \ar[r]_s \ar@/^2.5pc/[uu]^u
  & S \ar@{=}[u]
 }
$$
where $i_U\:U\rarrow T\sqcup U$ is the coproduct injection.
 Here the coproduct $T\sqcup U$ computed in $\sA$ belongs to
$\sA_{<\kappa}$ by Lemma~\ref{small-colimits-preserved}; so it is
also the coproduct in~$\sA_{<\kappa}$.
 By Proposition~\ref{strong-QE-epi-classes-characterized}(2),
the morphism $r=(q,s)\:T\sqcup U\rarrow S$ belongs to $\cQ$,
providing the desired factorization $s\rarrow r\rarrow p$.
\end{proof}

\begin{thm} \label{strong-QE-epi-preserved-by-correspondence}
 Let\/ $\sA$ be a $\kappa$\+accessible category with finite coproducts.
 Then, for any strong QE\+epi class $\cQ$ in the category\/
$\sA_{<\kappa}$, the (strongly locally $\kappa$\+coherent) QE\+epi
class\/ $\varinjlim_{(\kappa)}\cQ$ is a strong QE\+epi class in
the category\/~$\sA$.
 Conversely, for any strongly locally $\kappa$\+coherent strong
QE\+epi class $\cP$ in\/ $\sA$, the QE\+epi class
$\cP\cap\sA_{<\kappa}^\to$ in the category\/ $\sA_{<\kappa}$ is
a strong QE\+epi class.
\end{thm}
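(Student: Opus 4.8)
The plan is to treat the two implications separately, reducing each to results already established in the excerpt.

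For the forward implication I would start from the observation that, since $\sA$ has finite coproducts, the class $\cP=\varinjlim_{(\kappa)}\cQ$ is already a strongly locally $\kappa$\+coherent QE\+epi class in $\sA$ by Theorem~\ref{direct-limit-closure-of-QE-epi-theorem}, so it remains only to verify the extra condition~(iv$^*$). The cleanest route is to invoke Proposition~\ref{strong-QE-epi-classes-characterized}, which (for the category $\sA$ with finite coproducts) reduces being a strong QE\+epi class to two conditions: (1)~closure under retracts in $\sA^\to$, and (2)~that $(p,f)\:A\sqcup B\rarrow C$ lies in $\cP$ whenever $p\:A\rarrow C$ lies in $\cP$ and $f\:B\rarrow C$ is arbitrary. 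Condition~(1) I expect to come for free: $\cP$ is closed under $\kappa$\+directed colimits in $\sA^\to$ (this is part of Theorem~\ref{direct-limit-closure-of-QE-epi-theorem}, through Proposition~\ref{accessible-subcategory}), and retracts are a special case of $\kappa$\+directed colimits in $\sA^\to$ by~\cite[Observation~2.4]{AR}.

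The substance of the forward implication is condition~(2), and here I would use the pointwise membership criterion of Proposition~\ref{slc-QE-epi-class-characterization-prop}; this is exactly the point at which the hypothesis that $\cQ$ is a \emph{strong} QE\+epi class (and not merely a QE\+epi class) gets used, since that proposition presupposes strongness of $\cQ$. Given $p\in\cP$ and $f\:B\rarrow C$, I would check the criterion for $(p,f)\:A\sqcup B\rarrow C$: for each $S\in\sA_{<\kappa}$ and each $e\:S\rarrow C$, the ``only if'' part of Proposition~\ref{slc-QE-epi-class-characterization-prop} applied to $p$ yields an object $T\in\sA_{<\kappa}$, a morphism $q\in\cQ$, and a morphism $d'\:T\rarrow A$ with $p\circ d'=e\circ q$. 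Writing $i_A\:A\rarrow A\sqcup B$ for the coproduct injection and putting $d=i_A\circ d'$, the coproduct universal property gives $(p,f)\circ d=p\circ d'=e\circ q$, so the required square~\eqref{slc-QE-epi-class-characterization-diagram} exists with the same $q\in\cQ$. The ``if'' part of the proposition then delivers $(p,f)\in\cP$, establishing condition~(2) and hence, via Proposition~\ref{strong-QE-epi-classes-characterized}, that $\cP$ is a strong QE\+epi class.

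For the converse I would set $\cQ=\cP\cap\sA_{<\kappa}^\to$ and first cite Proposition~\ref{restriction-of-loc-coh-to-presentables-is-QE-epi} to know that $\cQ$ is already a (retraction\+closed) QE\+epi class in $\sA_{<\kappa}$, so that only condition~(iv$^*$) remains. This transfers essentially trivially: if $p$, $q$ is a composable pair in $\sA_{<\kappa}$ with $p\circ q\in\cQ\subset\cP$, then condition~(iv$^*$) for the strong QE\+epi class $\cP$ in $\sA$ gives $p\in\cP$, and since the domain and codomain of $p$ are $\kappa$\+presentable we conclude $p\in\cP\cap\sA_{<\kappa}^\to=\cQ$. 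I expect the only genuine obstacle to lie in the forward direction's condition~(2): the work is in recognizing that Proposition~\ref{slc-QE-epi-class-characterization-prop} is precisely the right tool and that a single coproduct injection converts a lifting for $p$ into one for $(p,f)$, while everything else is bookkeeping with previously proven facts.
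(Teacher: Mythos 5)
Your proposal is correct, and both implications go through; the converse direction matches the paper's (which simply declares it obvious). For the forward direction, however, your route differs in its decomposition from the paper's. The paper verifies axiom~(iv$^*$) for $\cP=\varinjlim_{(\kappa)}\cQ$ head-on: given $p'\:C\rarrow D$, \ $p''\:D\rarrow E$ with $p''\circ p'\in\cP$ and a test morphism $e\:S\rarrow E$, it takes the square supplied by the ``only if'' part of Proposition~\ref{slc-QE-epi-class-characterization-prop} for the composite (with $c\:T\rarrow C$ and $q\in\cQ$), sets $d=p'\circ c$, and reads off a square for $p''$ with the \emph{same} $q$, so the ``if'' part gives $p''\in\cP$. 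You instead verify the two conditions of Proposition~\ref{strong-QE-epi-classes-characterized} --- retract-closure (which indeed comes free from Proposition~\ref{accessible-subcategory}) and the coproduct condition, the latter by transporting the square along the coproduct injection $i_A$ --- and then let that proposition deliver~(iv$^*$). The two arguments hinge on exactly the same key tool, Proposition~\ref{slc-QE-epi-class-characterization-prop}, and the transport step is of the same one-line flavour in each (post-compose with $p'$ versus pre-compose $d'$ with $i_A$); the paper's version is marginally shorter because it bypasses the detour through Proposition~\ref{strong-QE-epi-classes-characterized}, while yours makes more visible where closure under retracts and the finite-coproduct hypothesis actually enter. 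Your closing remark correctly identifies where the strongness of $\cQ$ is used: the ``if'' direction of Proposition~\ref{slc-QE-epi-class-characterization-prop} is the only place it is needed.
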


\begin{proof}
 To prove the first assertion, we use the characterization of
the class $\cP=\varinjlim_{(\kappa)}\cQ$ provided by
Proposition~\ref{slc-QE-epi-class-characterization-prop}.
 Let $p'\:C\rarrow D$ and $p''\:D\rarrow E$ be two morphisms in~$\sA$
such that the composition $p''\circ p'$ belongs to~$\cP$.
 Suppose we are given an object $S\in\sA_{<\kappa}$ and a morphism
$e\:S\rarrow E$.
 By Proposition~\ref{slc-QE-epi-class-characterization-prop},
there is a commutative diagram
$$
 \xymatrix{
  C \ar[r]^{p'} & D \ar[r]^{p''} & E \\
  T \ar[u]^c \ar[rr]_q && S \ar[u]_e
 }
$$
with a morphism $q\in\cQ$.
 Setting $d=p'\circ c\:T\rarrow D$, we obtain the desired commutative
diagram~\eqref{slc-QE-epi-class-characterization-diagram} for
the morphisms $p''$ and~$e$.
 Applying Proposition~\ref{slc-QE-epi-class-characterization-prop}
again, we can conclude that $p''\in\cP$.

 The second assertion of the theorem is obvious and does not depend on
the assumption that $\sA$ has finite coproducts.
\end{proof}

\begin{rem}
 One can say that a QE\+mono class $\cM$ in a category $\sC$ is
a \emph{strong QE\+mono class} if it satisfies condition~(iv) dual to
condition~(iv$^*$) from Section~\ref{strong-QE-epi-classes-secn}.
 It would be interesting to know whether a version of
Theorem~\ref{strong-QE-epi-preserved-by-correspondence} holds for
strong QE\+mono classes, or what assumptions on a $\kappa$\+accessible
category $\sA$ are needed for it to hold.
 Let $\cN$ be a strong QE\+mono class in $\sA_{<\kappa}$; does it follow
that $\cM=\varinjlim_{(\kappa)}\cN$ is a strong QE\+mono class in~$\sA$?
\end{rem}

\Section{Regularity of Strongly Pure Monomorphisms}

 Let $\kappa$~be a regular cardinal and $\sA$ be a $\kappa$\+accessible
category.
 The definition of a strongly $\kappa$\+pure monomorphism in $\sA$ was
given in Section~\ref{strongly-pure-monomorphisms-secn}.
 For the definition of a regular monomorphism, see
Section~\ref{QE-mono-classes-secn}.

\begin{prop} \label{strongly-pure-monomorphisms-regular-prop}
 Let\/ $\sA$ be a $\kappa$\+accessible category such that every split
monomorphism in\/ $\sA$ has a cokernel pair.
 Then all strongly $\kappa$\+pure monomorphisms in\/ $\sA$ are
regular monomorphisms.
\end{prop}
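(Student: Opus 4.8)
The plan is to realize the strongly $\kappa$\+pure monomorphism as a $\kappa$\+directed colimit of split monomorphisms, pass to their cokernel pairs, and exploit the fact that each split monomorphism is the equalizer of its own cokernel pair, combined with the commutation of $\kappa$\+directed colimits with equalizers provided by Lemma~\ref{directed-colimits-commute-with-small-limits}. So let $m\:C\rarrow D$ be a strongly $\kappa$\+pure monomorphism, and write $m=\varinjlim_{\xi\in\Xi}^{\sA^\to}m_\xi$ for a $\kappa$\+directed diagram of split monomorphisms $m_\xi\:C_\xi\rarrow D_\xi$ between $\kappa$\+presentable objects. Each $m_\xi$ is in particular a split monomorphism in $\sA$, so by hypothesis it has a cokernel pair $k_{\xi,1}$, $k_{\xi,2}\:D_\xi\rightrightarrows K_\xi$. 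Since every split monomorphism is regular and $m_\xi$ has a cokernel pair, $m_\xi$ is the equalizer of $(k_{\xi,1},k_{\xi,2})$, as recalled at the beginning of Section~\ref{QE-mono-classes-secn}.

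The next step is to assemble these cokernel pairs into a single $\Xi$\+indexed diagram. The cokernel pair is the pushout of $m_\xi$ along itself, so any morphism $m_\xi\rarrow m_\eta$ in $\sA^\to$ induces, by the universal property of the pushout, a canonical morphism $K_\xi\rarrow K_\eta$ compatible with the $k_{\cdot,i}$; in this way the cokernel-pair construction is \emph{functorial} in~$\xi$. Hence the sequences $C_\xi\rarrow D_\xi\rightrightarrows K_\xi$ form a $\kappa$\+directed diagram in the category $\sA^J$ of Section~\ref{QE-mono-classes-secn}, whose termwise colimit is a sequence $C\rarrow D\rightrightarrows K$ with $K=\varinjlim_\xi^\sA K_\xi$ and $k_i=\varinjlim_\xi^\sA k_{\xi,i}$. (Because $\kappa$\+directed colimits commute with pushouts, $(k_1,k_2)$ is in fact the cokernel pair of~$m$, though only its equalizer will be needed for regularity.)

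Finally, I would apply Lemma~\ref{directed-colimits-commute-with-small-limits} to the functor $F\:\Xi\times(\bullet\rightrightarrows\bullet)\rarrow\sA$ sending $\xi\mapsto(D_\xi\rightrightarrows K_\xi)$: for each $\xi$ the corresponding parallel pair has a limit, namely $\varprojlim_{(\bullet\rightrightarrows\bullet)}^\sA F(\xi,{-})=C_\xi$ (the equalizer of $(k_{\xi,1},k_{\xi,2})$), so the lemma yields
$$
 \varprojlim\nolimits_{(\bullet\rightrightarrows\bullet)}^\sA(D\rightrightarrows K)
 =\varinjlim\nolimits_{\xi\in\Xi}^\sA C_\xi=C,
$$
the equalizer inclusion being exactly $m=\varinjlim_\xi^\sA m_\xi$. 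Thus $m$ is the equalizer of $(k_1,k_2)$, that is, a regular monomorphism. The step I expect to require the most care is the coherent, functorial formation of the cokernel pairs, so that the individual equalizer diagrams genuinely fit together into one $\Xi$\+indexed diagram to which Lemma~\ref{directed-colimits-commute-with-small-limits} applies; once that functoriality (from the universal property of the pushout) is in place, the commutation of $\kappa$\+directed colimits with the finite limit defining the equalizer does the rest. Note that the hypothesis that split monomorphisms have cokernel pairs is used precisely to guarantee both the existence of each $(k_{\xi,1},k_{\xi,2})$ and the identity $m_\xi=\varprojlim_{(\bullet\rightrightarrows\bullet)}^\sA(D_\xi\rightrightarrows K_\xi)$.
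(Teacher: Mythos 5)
Your proposal is correct and follows essentially the same route as the paper's own proof: decompose $m$ as a $\kappa$\+directed colimit of split monomorphisms, observe that each is the equalizer of its cokernel pair, note that $\kappa$\+directed colimits preserve cokernel pairs (being colimits), and invoke Lemma~\ref{directed-colimits-commute-with-small-limits} to pass the equalizer through the colimit. Your explicit attention to the functoriality of the cokernel-pair construction in~$\xi$ is a point the paper leaves implicit, but it is the same argument.
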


\begin{proof}
 The assumption of the proposition means that all split monomorphisms
in $\sA$ are effective (since all split monomorphisms are always
regular; see Section~\ref{QE-mono-classes-secn}).
 By Lemma~\ref{effective-monomorphisms-lemma}, it follows that all
the $\kappa$\+directed colimits of split monomorphisms in $\sA$ are
effective monomorphisms, too.
 For an alternative argument applicable under slightly more restrictive
assumptions, see
Proposition~\ref{strongly-pure-monomorphisms-pushout-stable-prop} below.
\end{proof}

 The definition of a very weak cokernel pair was given in
Section~\ref{very-weak-cokernel-pairs-secn}.

\begin{cor} \label{pure-monomorphisms-regular-cor}
 Let\/ $\sA$ be a $\kappa$\+accessible category with very weak
cokernel pairs (e.~g., this holds if\/ $\sA$ has finite products).
 Assume further that every split monomorphism in\/ $\sA_{<\kappa}$ has
a cokernel pair (in\/ $\sA_{<\kappa}$, or equivalently, in\/~$\sA$).
 Then all $\kappa$\+pure monomorphisms in\/ $\sA$ are regular
monomorphisms.
\end{cor}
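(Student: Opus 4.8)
The plan is to combine Theorem~\ref{under-very-weak-cokernel-pairs-theorem} with the argument already used in the proof of Proposition~\ref{strongly-pure-monomorphisms-regular-prop}. First I would invoke Theorem~\ref{under-very-weak-cokernel-pairs-theorem}: since $\sA$ has very weak cokernel pairs, every $\kappa$\+pure monomorphism $m\:C\rarrow D$ is strongly $\kappa$\+pure, that is, $m$~is a $\kappa$\+directed colimit in $\sA^\to$ of split monomorphisms $i_\xi\:C_\xi\rarrow D_\xi$ between $\kappa$\+presentable objects, the index~$\xi$ ranging over a $\kappa$\+directed poset~$\Xi$. Thus it suffices to show that every such colimit is the equalizer of its own cokernel pair.

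The key point is that the hypothesis on cokernel pairs of split monomorphisms in $\sA_{<\kappa}$ is exactly what makes the argument of Proposition~\ref{strongly-pure-monomorphisms-regular-prop} applicable to this colimit presentation. Each $i_\xi$ is a split monomorphism between $\kappa$\+presentable objects, hence a split monomorphism in $\sA_{<\kappa}$; by assumption it has a cokernel pair in $\sA_{<\kappa}$, which by Lemma~\ref{small-colimits-preserved} is also its cokernel pair in~$\sA$. I would then repeat the three-step argument from Proposition~\ref{strongly-pure-monomorphisms-regular-prop}: $\kappa$\+directed colimits preserve cokernel pairs (colimits commute with colimits), so the cokernel pair of~$m$ is the $\kappa$\+directed colimit of the cokernel pairs of the~$i_\xi$; each split monomorphism $i_\xi$ is the equalizer of its cokernel pair, all split monomorphisms being regular as recorded in Section~\ref{QE-mono-classes-secn}; and $\kappa$\+directed colimits preserve finite limits, in particular equalizers, in $\sA$ by Lemma~\ref{directed-colimits-commute-with-small-limits}. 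Combining these, $m$~is the equalizer of its cokernel pair in~$\sA$, hence a regular monomorphism.

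The parenthetical remark that very weak cokernel pairs exist whenever $\sA$ has finite products follows from Example~\ref{very-weak-cokernel-pair-examples}(2), exactly as in Corollary~\ref{under-finite-products-corollary}. The only subtlety worth flagging is that Proposition~\ref{strongly-pure-monomorphisms-regular-prop} is stated under the stronger hypothesis that \emph{all} split monomorphisms in $\sA$ have cokernel pairs, whereas here I only assume this for split monomorphisms in~$\sA_{<\kappa}$. This is harmless precisely because the colimit presentation supplied by Theorem~\ref{under-very-weak-cokernel-pairs-theorem} involves only split monomorphisms between $\kappa$\+presentable objects; so one does not cite Proposition~\ref{strongly-pure-monomorphisms-regular-prop} verbatim, but rather reruns its short argument for the restricted class of split monomorphisms at hand. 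I do not expect any genuine obstacle beyond keeping track of this weakening of the hypothesis.
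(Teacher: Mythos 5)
Your proposal is correct and follows essentially the same route as the paper: invoke Theorem~\ref{under-very-weak-cokernel-pairs-theorem} to reduce to strongly $\kappa$\+pure monomorphisms, use Lemma~\ref{small-colimits-preserved} to identify cokernel pairs in $\sA_{<\kappa}$ with those in $\sA$, and then rerun the colimit argument from the proof of Proposition~\ref{strongly-pure-monomorphisms-regular-prop}. The subtlety you flag --- that the hypothesis is only on split monomorphisms in $\sA_{<\kappa}$, so one reruns the proof of that proposition rather than citing its statement --- is exactly the point the paper's own proof handles the same way.
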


\begin{proof}
 In any $\kappa$\+accessible category with very weak cokernel pairs,
the classes of $\kappa$\+pure and strongly $\kappa$\+pure monomorphisms
coincide by Theorem~\ref{under-very-weak-cokernel-pairs-theorem}.
 Any category with finite products has very weak cokernel pairs by
Example~\ref{very-weak-cokernel-pair-examples}(2).
 For a morphism~$i$ in $\sA_{<\kappa}$, the cokernel pair of~$i$ in
$\sA_{<\kappa}$ is the same thing as the cokernel pair of~$i$ in $\sA$
by Lemma~\ref{small-colimits-preserved}.
 The rest is clear from the proof of
Proposition~\ref{strongly-pure-monomorphisms-regular-prop}.
\end{proof}

\begin{ex} \label{in-additive-pure-monomorphisms-are-regular-example}
 Any accessible category has split
idempotents~\cite[Observation~2.4]{AR}.
 In particular, any accessible \emph{additive} category
is idempotent-complete.
 By Remark~\ref{pushouts-of-split-monomorphisms-remark}, it follows
that all pushouts of split monomorphisms exist in any accessible
additive category~$\sA$.
 In particular, all split monomorphisms have cokernel pairs in~$\sA$.
 Thus it follows from Corollary~\ref{pure-monomorphisms-regular-cor}
that all $\kappa$\+pure monomorphisms are regular in any
$\kappa$\+accessible additive category~$\sA$.
 This result also follows from the discussion of
the \emph{$\kappa$\+pure exact structure} on a $\kappa$\+accessible
additive category in~\cite[Section~4]{Plce}.
 The specific references are~\cite[Proposition~2.5]{Plce} (for
the existence of the $\kappa$\+pure exact structure)
and~\cite[Proposition~4.4]{Plce} (for the description of the admissible
monomorphisms in the $\kappa$\+pure exact structure).
\end{ex}

\Section{Pushouts of Strongly Pure Monomorphisms}

 The discussion in this section is just a special case of
Sections~\ref{QE-mono-classes-secn}\+-\ref{QE-mono-construction-secn}.

\begin{prop} \label{strongly-pure-monomorphisms-pushout-stable-prop}
 Let\/ $\sA$ be a $\kappa$\+accessible category such that pushouts
of split monomorphisms exist in\/~$\sA$.
 Then all pushouts of strongly $\kappa$\+pure monomorphisms exist
in\/ $\sA$, and all such pushouts are strongly $\kappa$\+pure
monomorphisms themselves.
\end{prop}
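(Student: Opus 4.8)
The plan is to recognize the statement as a direct instance of the QE\+mono machinery of Sections~\ref{QE-mono-classes-secn}\+-\ref{QE-mono-construction-secn}. Let $\cN$ denote the class of all split monomorphisms between $\kappa$\+presentable objects, regarded as a class of morphisms in the category $\sA_{<\kappa}$. I would first verify that $\cN$ is a QE\+mono class in $\sA_{<\kappa}$, and then invoke Theorem~\ref{direct-limit-closure-of-QE-mono-theorem} for this class $\cN$ to conclude that $\cM=\varinjlim_{(\kappa)}\cN$ is a (locally $\kappa$\+coherent) QE\+mono class in~$\sA$.

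The one point requiring verification is that pushouts of split monomorphisms exist in $\sA_{<\kappa}$. Given a split monomorphism $m\:C\rarrow D$ and an arbitrary morphism $f\:C\rarrow C'$ in $\sA_{<\kappa}$, the pushout of the span $(m,f)$ exists in $\sA$ by the hypothesis of the proposition (a split monomorphism between $\kappa$\+presentable objects is in particular a split monomorphism in~$\sA$). By the first assertion of Lemma~\ref{small-colimits-preserved}, this pushout, being a $\kappa$\+small colimit of $\kappa$\+presentable objects, is again $\kappa$\+presentable; and by the second assertion of the same lemma it is the pushout of $(m,f)$ computed in~$\sA_{<\kappa}$. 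Thus all pushouts of split monomorphisms exist in $\sA_{<\kappa}$, which by Example~\ref{QE-mono-classes-examples}(3) (applied to the category $\sA_{<\kappa}$) is precisely the condition needed for $\cN$ to be a QE\+mono class in~$\sA_{<\kappa}$.

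With this in hand, Theorem~\ref{direct-limit-closure-of-QE-mono-theorem} yields that $\cM=\varinjlim_{(\kappa)}\cN\subset\sA^\to$ is a QE\+mono class in~$\sA$. It then remains only to observe that, unwinding the definitions, $\cM$ is exactly the class of strongly $\kappa$\+pure monomorphisms: by definition the latter are the $\kappa$\+directed colimits in $\sA^\to$ of split monomorphisms between $\kappa$\+presentable objects, which is literally $\varinjlim_{(\kappa)}\cN$. Condition~(i) in the definition of a QE\+mono class then delivers both conclusions simultaneously, namely that all pushouts of morphisms from $\cM$ exist in $\sA$ and that $\cM$ is stable under pushouts, which is the assertion of the proposition.

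The argument is short precisely because the substantive work has already been carried out in establishing Theorem~\ref{direct-limit-closure-of-QE-mono-theorem}; the only step that is not purely formal is the descent of pushouts from $\sA$ to $\sA_{<\kappa}$ in the second paragraph, for which one must use both assertions of Lemma~\ref{small-colimits-preserved}. I expect no genuine obstacle here. One could alternatively argue directly, writing a strongly $\kappa$\+pure monomorphism as a $\kappa$\+directed colimit of split monomorphisms $m_\xi$ of $\kappa$\+presentable objects, forming the pushouts of the $m_\xi$ (again split monomorphisms, with canonically induced splittings as in Example~\ref{QE-mono-classes-examples}(3)), and using that $\kappa$\+directed colimits commute with pushouts; but routing the proof through the QE\+mono formulation keeps the bookkeeping cleanest and makes the ``special case'' remark heading this section precise.
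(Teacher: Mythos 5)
Your proposal is correct and follows essentially the same route as the paper's own proof: take $\cN$ to be the split monomorphisms in $\sA_{<\kappa}$, use Lemma~\ref{small-colimits-preserved} to descend the existence of pushouts to $\sA_{<\kappa}$, invoke Example~\ref{QE-mono-classes-examples}(3) to see that $\cN$ is a QE\+mono class, and then apply Theorem~\ref{direct-limit-closure-of-QE-mono-theorem} so that condition~(i) for $\cM=\varinjlim_{(\kappa)}\cN$ gives both conclusions. (A trivial remark: for the descent step only the first assertion of Lemma~\ref{small-colimits-preserved} is needed, since a colimit computed in $\sA$ that lands in a full subcategory is automatically the colimit there.)
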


\begin{proof}
 Let $\cN$ be the class of all split monomorphisms in~$\sA_{<\kappa}$.
 For any span in $\sA_{<\kappa}$, if the pushout exists in $\sA$, then
it belongs to $\sA_{<\kappa}$ by Lemma~\ref{small-colimits-preserved}.
 So all pushouts of split monomorphisms exist in $\sA_{<\kappa}$ under
out assumptions.
 By Example~\ref{QE-mono-classes-examples}(3), it follows that
$\cN$ is a QE\+mono class in~$\sA_{<\kappa}$.
 Applying Theorem~\ref{direct-limit-closure-of-QE-mono-theorem},
we conclude that the class of strongly $\kappa$\+pure monomorphisms
$\cM=\varinjlim_{(\kappa)}\cN$ is a QE\+mono class in~$\sA$.
 In particular, condition~(i) from
Section~\ref{QE-mono-classes-secn} is satisfied for $\cM$, as desired.
\end{proof}

\begin{cor} \label{pure-monomorphisms-pushout-stable-cor}
 Let\/ $\sA$ be a $\kappa$\+accessible category with very weak
cokernel pairs (e.~g., this holds if\/ $\sA$ has finite products).
 Assume further that pushouts of split monomorphisms exist
in\/~$\sA_{<\kappa}$.
 Then all pushouts of $\kappa$\+pure monomorphisms exist in\/ $\sA$,
and all such pushouts are $\kappa$\+pure monomorphisms themselves.
\end{cor}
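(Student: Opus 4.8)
The plan is to recognize the class of $\kappa$\+pure monomorphisms in $\sA$ as a QE\+mono class, since condition~(i) in the definition of a QE\+mono class (Section~\ref{QE-mono-classes-secn}) asserts \emph{precisely} the existence and pushout-stability that the corollary demands. Concretely, I would combine Theorem~\ref{under-very-weak-cokernel-pairs-theorem}, which identifies the $\kappa$\+pure monomorphisms with the strongly $\kappa$\+pure ones, with the construction in Theorem~\ref{direct-limit-closure-of-QE-mono-theorem}, which manufactures a QE\+mono class in $\sA$ out of a QE\+mono class in $\sA_{<\kappa}$.

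First I would set $\cN$ to be the class of all split monomorphisms in $\sA_{<\kappa}$. The hypothesis that pushouts of split monomorphisms exist in $\sA_{<\kappa}$, together with Example~\ref{QE-mono-classes-examples}(3), shows that $\cN$ is a QE\+mono class in $\sA_{<\kappa}$ (and by Lemma~\ref{small-colimits-preserved} these pushouts computed in $\sA_{<\kappa}$ are also pushouts in $\sA$). Applying Theorem~\ref{direct-limit-closure-of-QE-mono-theorem}, I obtain that the class $\cM=\varinjlim_{(\kappa)}\cN\subset\sA^\to$ of all $\kappa$\+directed colimits of morphisms from $\cN$ is a locally $\kappa$\+coherent QE\+mono class in $\sA$. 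By the very definition of strongly $\kappa$\+pure monomorphisms given in Section~\ref{strongly-pure-monomorphisms-secn}, the class $\cM$ is exactly the class of strongly $\kappa$\+pure monomorphisms. Now I invoke Theorem~\ref{under-very-weak-cokernel-pairs-theorem}: since $\sA$ has very weak cokernel pairs, the classes of $\kappa$\+pure and strongly $\kappa$\+pure monomorphisms coincide, so $\cM$ is the class of all $\kappa$\+pure monomorphisms.

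It then remains only to read off condition~(i) of the QE\+mono class $\cM$ from Section~\ref{QE-mono-classes-secn}: every pushout of a morphism in $\cM$ exists in $\sA$ and again lies in $\cM$. Translated back through the two identifications above, this is exactly the assertion that all pushouts of $\kappa$\+pure monomorphisms exist in $\sA$ and are $\kappa$\+pure monomorphisms, as desired. The proof is thus a recombination of results already in hand, and the only point demanding care is that our hypothesis places pushouts of split monomorphisms in $\sA_{<\kappa}$ rather than in $\sA$, as in Proposition~\ref{strongly-pure-monomorphisms-pushout-stable-prop}; I would therefore run the argument of that proposition directly from the $\sA_{<\kappa}$ hypothesis rather than cite it verbatim. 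The substantive input—and the step I would regard as the real obstacle, were it not already established—is the coincidence of pure and strongly pure monomorphisms supplied by Theorem~\ref{under-very-weak-cokernel-pairs-theorem}, whose hypothesis of very weak cokernel pairs is in turn guaranteed by finite products via Example~\ref{very-weak-cokernel-pair-examples}(2).
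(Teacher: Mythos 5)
Your proposal is correct and follows essentially the same route as the paper: the paper's proof of Corollary~\ref{pure-monomorphisms-pushout-stable-cor} likewise combines Theorem~\ref{under-very-weak-cokernel-pairs-theorem}, Example~\ref{very-weak-cokernel-pair-examples}(2), and the argument of Proposition~\ref{strongly-pure-monomorphisms-pushout-stable-prop} (taking $\cN$ to be the split monomorphisms in $\sA_{<\kappa}$, applying Example~\ref{QE-mono-classes-examples}(3) and Theorem~\ref{direct-limit-closure-of-QE-mono-theorem}, and reading off condition~(i)). Your observation that the hypothesis here places pushouts in $\sA_{<\kappa}$ rather than $\sA$, so that one reruns the proposition's argument rather than citing it verbatim, is exactly the point the paper's ``cf.''\ is making.
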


\begin{proof}
 Follows from Theorem~\ref{under-very-weak-cokernel-pairs-theorem},
Example~\ref{very-weak-cokernel-pair-examples}(2), and the proof of
Proposition~\ref{strongly-pure-monomorphisms-pushout-stable-prop}
(cf.\ the proof of Corollary~\ref{pure-monomorphisms-regular-cor}).
\end{proof}

\begin{ex}
 According to
Example~\ref{in-additive-pure-monomorphisms-are-regular-example},
all pushouts of split monomorphisms exist in any accessible
additive category~$\sA$.
 So it follows from
Corollary~\ref{pure-monomorphisms-pushout-stable-cor} that
$\kappa$\+pure monomorphisms are stable under pushouts in
any $\kappa$\+accessible additive category~$\sA$.
 This result also follows from the discussion of the $\kappa$\+pure
exact structure on $\sA$ in~\cite[Section~4]{Plce}; see
Example~\ref{in-additive-pure-monomorphisms-are-regular-example}
for specific references.
\end{ex}

\begin{ex} \label{no-pushouts-of-split-monos-in-acc-preadd-example}
 Any small category with split idempotents is
accessible~\cite[Theorem~2.2.2]{MP}.
 Therefore, Remark~\ref{pushouts-of-split-monomorphisms-remark}
provides an example of an accessible preadditive category that does
not have pushouts (or even cokernel pairs, or even very weak
cokernel pairs) of split monomorphisms.
 So pushouts of pure monomorphisms need not exist in an accessible
preadditive category, generally speaking.
\end{ex}

\Section{Regularity of Strongly Pure Epimorphisms}

 Let $\kappa$~be a regular cardinal and $\sA$ be a $\kappa$\+accessible
category.
 The definition of a strongly $\kappa$\+pure epimorphism in $\sA$ was
given in Section~\ref{strongly-pure-epimorphisms-secn}.
 For the definition of a regular epimorphism, see
Section~\ref{QE-epi-classes-secn}.

\begin{prop} \label{strongly-pure-epimorphisms-regular-prop}
 Let\/ $\sA$ be a $\kappa$\+accessible category such that every split
epimorphism in\/ $\sA$ has a kernel pair.
 Then all strongly $\kappa$\+pure epimorphisms in\/ $\sA$ are
regular epimorphisms.
\end{prop}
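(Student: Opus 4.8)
The plan is to dualize the proof of Proposition~\ref{strongly-pure-monomorphisms-regular-prop}. First I would take a strongly $\kappa$\+pure epimorphism $p\:D\rarrow E$ in $\sA$ and, by definition, write it as a $\kappa$\+directed colimit $p=\varinjlim_{\xi\in\Xi}p_\xi$ in $\sA^\to$ of split epimorphisms $p_\xi\:X_\xi\rarrow Y_\xi$ between $\kappa$\+presentable objects, indexed by a $\kappa$\+directed poset~$\Xi$. The goal is then to exhibit $p$ as the coequalizer of its own kernel pair, which is exactly the condition for $p$ to be a regular epimorphism (Section~\ref{QE-epi-classes-secn}).

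Each $p_\xi$ is a split epimorphism in $\sA$, so by the hypothesis of the proposition it has a kernel pair $k_{\xi,1}$, $k_{\xi,2}\:K_\xi\rightrightarrows X_\xi$ in~$\sA$. The next step is to pass to the $\kappa$\+directed colimit. Since $\kappa$\+directed colimits commute with those $\kappa$\+small limits that exist in~$\sA$, by Lemma~\ref{directed-colimits-commute-with-small-limits}, the colimit $\varinjlim_{\xi}K_\xi$ exists and the induced parallel pair $\varinjlim_{\xi}K_\xi\rightrightarrows D$ is the kernel pair of~$p$ in~$\sA$. This is where the assumption on kernel pairs of split epimorphisms enters, and it is the step I expect to carry the main technical weight: I would be careful to invoke Lemma~\ref{directed-colimits-commute-with-small-limits} in the form that \emph{produces} the limit on the colimit side (rather than merely comparing two preexisting limits), exactly as that lemma is stated, so that no separate existence hypothesis on the kernel pair of $p$ is needed.

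Finally, all split epimorphisms are regular (as recalled in Section~\ref{QE-epi-classes-secn}), so each $p_\xi$ is the coequalizer of its kernel pair $(k_{\xi,1},k_{\xi,2})$. Since $\kappa$\+directed colimits preserve all colimits, and in particular coequalizers, the colimit~$p$ is the coequalizer of the parallel pair $\varinjlim_{\xi}K_\xi\rightrightarrows D$. Combining this with the previous step, $p$ is the coequalizer of its kernel pair, hence a regular epimorphism, as desired. Everything beyond the kernel-pair computation is formal; I would also note in passing that, mirroring the closing remark of Proposition~\ref{strongly-pure-monomorphisms-regular-prop} and the dual of Proposition~\ref{strongly-pure-monomorphisms-pushout-stable-prop}, one could instead deduce regularity from pullback-stability under slightly stronger hypotheses, but the colimit argument above is cleaner and uses only the stated assumption.
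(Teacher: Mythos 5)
Your proposal is correct and follows essentially the same route as the paper's own proof: decompose the strongly $\kappa$\+pure epimorphism as a $\kappa$\+directed colimit of split epimorphisms, take kernel pairs levelwise using the hypothesis, pass to the colimit via Lemma~\ref{directed-colimits-commute-with-small-limits} (in the existence-producing form, as you rightly note), and conclude using that split epimorphisms are coequalizers of their kernel pairs and that $\kappa$\+directed colimits preserve coequalizers. Your closing remark about the alternative pullback-stability argument also matches the paper's final sentence pointing to Proposition~\ref{strongly-pure-epimorphisms-pullback-stable-prop}.
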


\begin{proof}
 The assumption of the proposition means that all split epimorphisms
in $\sA$ are effective (since all split epimorphisms are always
regular; see Section~\ref{QE-epi-classes-secn}).
 By Lemma~\ref{effective-epimorphisms-lemma}(b), it follows that all
the $\kappa$\+directed colimits of split epimorphisms in $\sA$ are
effective epimorphisms, too.
 For an alternative argument applicable under slightly more restrictive
(or slightly different) assumptions, see
Proposition~\ref{strongly-pure-epimorphisms-pullback-stable-prop} below.
\end{proof}

 The definition of a very weak split pullback was given in
Section~\ref{very-weak-split-pullbacks-secn}.

\begin{cor} \label{pure-epimorphisms-regular-cor}
 Let\/ $\sA$ be a $\kappa$\+accessible category with very weak
split pullbacks (e.~g., this holds if\/ $\sA$ has finite coproducts).
 Assume further that every split epimorphism in\/ $\sA_{<\kappa}$ has
a kernel pair in\/~$\sA$ (in particular, this holds if every split
epimorphism has a kernel pair in\/~$\sA_{<\kappa}$).
 Then all $\kappa$\+pure epimorphisms in\/ $\sA$ are regular
epimorphisms.
\end{cor}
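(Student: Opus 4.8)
The plan is to obtain this corollary by dualizing the proof of Corollary~\ref{pure-monomorphisms-regular-cor}, i.e., by feeding Theorem~\ref{under-very-weak-split-pullbacks-theorem} into the argument behind Proposition~\ref{strongly-pure-epimorphisms-regular-prop}. First I would use the hypothesis that $\sA$ has very weak split pullbacks (which holds automatically when $\sA$ has finite coproducts, by Example~\ref{very-weak-split-pullback-examples}(2)) to invoke Theorem~\ref{under-very-weak-split-pullbacks-theorem}, identifying the class of $\kappa$\+pure epimorphisms with the class of strongly $\kappa$\+pure epimorphisms. It therefore suffices to show that every strongly $\kappa$\+pure epimorphism in $\sA$ is a regular epimorphism.

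Next I would take a strongly $\kappa$\+pure epimorphism and present it as a $\kappa$\+directed colimit $p=\varinjlim_{\xi\in\Xi}p_\xi$ in $\sA^\to$ of split epimorphisms $p_\xi$ between $\kappa$\+presentable objects, that is, of split epimorphisms in $\sA_{<\kappa}$. At this point I would run the proof of Proposition~\ref{strongly-pure-epimorphisms-regular-prop}: by the hypothesis of the corollary, each $p_\xi$ has a kernel pair in~$\sA$; since $\kappa$\+directed colimits commute with those finite limits (in particular, kernel pairs) that exist in $\sA$ by Lemma~\ref{directed-colimits-commute-with-small-limits}, the kernel pair of $p$ exists and is the colimit of the kernel pairs of the~$p_\xi$. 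As all split epimorphisms are regular (Section~\ref{QE-epi-classes-secn}), each $p_\xi$ is the coequalizer of its kernel pair, and $\kappa$\+directed colimits preserve coequalizers; hence $p$ is the coequalizer of its own kernel pair, and so regular.

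The main (if minor) point to check is that this really works under the weaker hypothesis of the corollary rather than that of Proposition~\ref{strongly-pure-epimorphisms-regular-prop}, which nominally demands kernel pairs of \emph{all} split epimorphisms in~$\sA$. The resolution is that the argument only ever consumes the kernel pairs of the morphisms~$p_\xi$, and these have $\kappa$\+presentable domains and codomains; so assuming kernel pairs (in~$\sA$) merely for split epimorphisms in $\sA_{<\kappa}$ is exactly enough. Finally, for the parenthetical clause I would note that if a split epimorphism between $\kappa$\+presentable objects has a kernel pair already in~$\sA_{<\kappa}$, then, since $\sA_{<\kappa}$ is weakly colimit-dense in~$\sA$, Lemma~\ref{limits-preserved} shows that the inclusion $\sA_{<\kappa}\rarrow\sA$ preserves this limit, so it remains a kernel pair in~$\sA$; this confirms that the ``in particular'' hypothesis implies the one used in the main argument.
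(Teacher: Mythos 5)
Your proposal is correct and follows essentially the same route as the paper: reduce to strongly $\kappa$\+pure epimorphisms via Theorem~\ref{under-very-weak-split-pullbacks-theorem} and Example~\ref{very-weak-split-pullback-examples}(2), handle the parenthetical hypothesis via Lemma~\ref{limits-preserved}, and then run the colimit-of-kernel-pairs argument from Proposition~\ref{strongly-pure-epimorphisms-regular-prop}. Your explicit observation that the argument only consumes kernel pairs of the split epimorphisms $p_\xi$ in $\sA_{<\kappa}$ (so the weaker hypothesis suffices) is exactly the point the paper leaves implicit in ``the rest is clear from the proof of Proposition~\ref{strongly-pure-epimorphisms-regular-prop}.''
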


\begin{proof}
 In any $\kappa$\+accessible category with very weak split pullbacks,
the classes of $\kappa$\+pure and strongly $\kappa$\+pure epimorphisms
coincide by Theorem~\ref{under-very-weak-split-pullbacks-theorem}.
 Any category with finite coproducts has very weak split pullbacks by
Example~\ref{very-weak-split-pullback-examples}(2).
 For a morphism~$p$ in $\sA_{<\kappa}$, if the kernel pair of~$p$ exists
in $\sA_{<\kappa}$, then it is also the kernel pair of~$p$ in $\sA$
by Lemma~\ref{limits-preserved}.
 The rest is clear from the proof of
Proposition~\ref{strongly-pure-epimorphisms-regular-prop}.
\end{proof}

\begin{ex} \label{in-additive-pure-epimorphisms-are-regular-example}
 Similarly to
Example~\ref{in-additive-pure-monomorphisms-are-regular-example},
all pullbacks of split epimorphisms exist in any accessible
\emph{additive} category $\sA$ by
Remark~\ref{pullbacks-of-split-epimorphisms-remark}.
 In particular, all split epimorphisms have kernel pairs in~$\sA$.
 Similarly, all split epimorphisms have kernel pairs in $\sA_{<\kappa}$,
since $\sA_{<\kappa}$ is idempotent-complete
by Remark~\ref{QE-mono-retraction-closedness-remark}.
 Thus it follows from Corollary~\ref{pure-epimorphisms-regular-cor}
that all $\kappa$\+pure epimorphisms are regular in any
$\kappa$\+accessible additive category~$\sA$.
 This result also follows from the discussion of
the \emph{$\kappa$\+pure exact structure} on a $\kappa$\+accessible
additive category in~\cite[Section~4]{Plce}.
 The specific references are~\cite[Proposition~2.5]{Plce} (for
the existence of the $\kappa$\+pure exact structure)
and~\cite[Proposition~4.2]{Plce} (for the description of the admissible
epimorphisms in the $\kappa$\+pure exact structure).
\end{ex}

\Section{Pullbacks of Strongly Pure Epimorphisms}

\begin{prop} \label{strongly-pure-epimorphisms-pullback-stable-prop}
 Let\/ $\sA$ be a $\kappa$\+accessible category such that pullbacks
of split epimorphisms exist in the category\/~$\sA_{<\kappa}$.
 Then all pullbacks of strongly $\kappa$\+pure epimorphisms exist
in\/ $\sA$, and all such pullbacks are strongly $\kappa$\+pure
epimorphisms themselves.
\end{prop}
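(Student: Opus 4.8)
The plan is to mirror the proof of Proposition~\ref{strongly-pure-monomorphisms-pushout-stable-prop}, applying the general QE\+epi machinery from Section~\ref{QE-epi-construction-secn} to the class of split epimorphisms. Let $\cQ$ denote the class of all split epimorphisms in~$\sA_{<\kappa}$. By hypothesis, all pullbacks of split epimorphisms exist in~$\sA_{<\kappa}$, so Example~\ref{QE-epi-classes-examples}(3) tells us immediately that $\cQ$ is a QE\+epi class in the category~$\sA_{<\kappa}$.

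Next I would invoke Theorem~\ref{direct-limit-closure-of-QE-epi-theorem} with $\cQ$ in the role of the QE\+epi class on~$\sA_{<\kappa}$. The theorem produces $\cP=\varinjlim_{(\kappa)}\cQ$, a strongly locally $\kappa$\+coherent QE\+epi class in~$\sA$. The key observation is that, by the definition given in Section~\ref{strongly-pure-epimorphisms-secn}, a morphism of $\sA$ is a strongly $\kappa$\+pure epimorphism precisely when it is a $\kappa$\+directed colimit in $\sA^\to$ of split epimorphisms between $\kappa$\+presentable objects; that is, $\cP$ is exactly the class of strongly $\kappa$\+pure epimorphisms in~$\sA$.

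Finally, condition~(i$^*$) in the definition of a QE\+epi class (Section~\ref{QE-epi-classes-secn}) asserts precisely that all pullbacks of morphisms from $\cP$ exist in $\sA$ and that $\cP$ is stable under pullbacks. Since $\cP$ is a QE\+epi class, this condition holds, which is exactly the assertion of the proposition.

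I do not expect a genuine obstacle, as all the real work is carried out inside Theorem~\ref{direct-limit-closure-of-QE-epi-theorem}. The only point requiring care is that the hypothesis is stated directly in terms of~$\sA_{<\kappa}$ rather than~$\sA$: unlike the pushout case of Proposition~\ref{strongly-pure-monomorphisms-pushout-stable-prop}, where closure of $\sA_{<\kappa}$ under $\kappa$\+small colimits (Lemma~\ref{small-colimits-preserved}) lets one pass from pushouts in $\sA$ to pushouts in~$\sA_{<\kappa}$, pullbacks are limits and need not descend to $\sA_{<\kappa}$ in the same way, so the existence of pullbacks of split epimorphisms must be posited in $\sA_{<\kappa}$ from the outset.
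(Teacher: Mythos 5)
Your proposal is correct and follows essentially the same route as the paper's own proof: take $\cQ$ to be the split epimorphisms in $\sA_{<\kappa}$, note via Example~\ref{QE-epi-classes-examples}(3) that the hypothesis makes $\cQ$ a QE\+epi class, apply Theorem~\ref{direct-limit-closure-of-QE-epi-theorem} to conclude that $\cP=\varinjlim_{(\kappa)}\cQ$ (which is exactly the class of strongly $\kappa$\+pure epimorphisms) is a QE\+epi class in $\sA$, and read off condition~(i$^*$). Your closing remark about why the hypothesis must be posited in $\sA_{<\kappa}$ rather than in $\sA$ (pullbacks, being limits, do not descend to $\sA_{<\kappa}$ the way $\kappa$\+small colimits do) correctly identifies the one asymmetry with the pushout case.
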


\begin{proof}
 Let $\cQ$ be the class of all split epimorphisms in~$\sA_{<\kappa}$.
 By Example~\ref{QE-epi-classes-examples}(3),
\,$\cQ$ is a QE\+epi class in~$\sA_{<\kappa}$.
 Applying Theorem~\ref{direct-limit-closure-of-QE-epi-theorem},
we conclude that the class of strongly $\kappa$\+pure epimorphisms
$\cP=\varinjlim_{(\kappa)}\cQ$ is a QE\+epi class in~$\sA$.
 In particular, condition~(i$^*$) from
Section~\ref{QE-epi-classes-secn} is satisfied for $\cP$, as desired.
\end{proof}

\begin{cor} \label{pure-epimorphisms-pullback-stable-cor}
 Let\/ $\sA$ be a $\kappa$\+accessible category with very weak
split pullbacks (e.~g., this holds if\/ $\sA$ has finite coproducts).
 Assume further that pullbacks of split epimorphisms exist
in\/~$\sA_{<\kappa}$.
 (More generally, it suffices to assume that every split epimorphism
in\/ $\sA_{<\kappa}$ has a pullback in\/ $\sA$ along every morphism
in\/~$\sA_{<\kappa}$.)
 Then all pullbacks of $\kappa$\+pure epimorphisms exist in\/ $\sA$,
and all such pullbacks are $\kappa$\+pure epimorphisms themselves.
\end{cor}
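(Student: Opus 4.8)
The plan is to reduce the Corollary to facts already established for strongly $\kappa$-pure epimorphisms. By Theorem~\ref{under-very-weak-split-pullbacks-theorem}, whose hypothesis is automatic in the finite-coproduct case by Example~\ref{very-weak-split-pullback-examples}(2), the classes of $\kappa$-pure and strongly $\kappa$-pure epimorphisms in $\sA$ coincide; hence it suffices to work with strongly $\kappa$-pure epimorphisms. Under the first hypothesis, that pullbacks of split epimorphisms exist already in $\sA_{<\kappa}$, the assertion is immediate from Proposition~\ref{strongly-pure-epimorphisms-pullback-stable-prop}, just as in the proof of Corollary~\ref{pure-epimorphisms-regular-cor}: the class $\cQ$ of split epimorphisms in $\sA_{<\kappa}$ is then a QE-epi class, its directed-colimit closure $\varinjlim_{(\kappa)}\cQ$ is the class of strongly $\kappa$-pure epimorphisms, and condition~(i$^*$) for this class is exactly the desired pullback-stability.

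The substance of the Corollary lies in the more general hypothesis, that every split epimorphism in $\sA_{<\kappa}$ admits a pullback in $\sA$ --- not necessarily in $\sA_{<\kappa}$ --- along every morphism of $\sA_{<\kappa}$. Here I would argue directly, bypassing the QE-epi machinery. Let $p\:D\rarrow E$ be a strongly $\kappa$-pure epimorphism and $f\:E'\rarrow E$ an arbitrary morphism of $\sA$. Proceeding as in the proof of Theorem~\ref{direct-limit-closure-of-QE-epi-theorem} --- applying Proposition~\ref{comma-category-objects-directed-colimits} to the identity functor on $\sA$ and the codomain functor on the full subcategory $\sP\subset\sA^\to$ of strongly $\kappa$-pure epimorphisms, which is $\kappa$-accessible by Proposition~\ref{accessible-subcategory} --- I would represent the cospan $(p,f)$ as a $\kappa$-directed colimit of cospans $(q_\xi,g_\xi)$ in which $q_\xi$ is a split epimorphism between $\kappa$-presentable objects and $g_\xi\:Z_\xi\rarrow Y_\xi$ is a morphism of $\sA_{<\kappa}$. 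By the hypothesis, each such cospan has a pullback $P_\xi$ in $\sA$.

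Now I would apply Lemma~\ref{directed-colimits-commute-with-small-limits}: since every $P_\xi$ exists, the pullback of the colimit cospan $(p,f)$ exists in $\sA$ and is naturally identified with $\varinjlim_\xi P_\xi$, so all pullbacks of $\kappa$-pure epimorphisms exist. Finally, a pullback of a split epimorphism along any morphism is again a split epimorphism, so each projection $P_\xi\rarrow Z_\xi$ is a split epimorphism; passing to the $\kappa$-directed colimit, the pullback projection onto $E'$ is a $\kappa$-directed colimit in $\sA^\to$ of split epimorphisms, hence a $\kappa$-pure (equivalently, strongly $\kappa$-pure) epimorphism by Lemma~\ref{pure-epimorphisms-simple-properties}(a,e) together with Theorem~\ref{under-very-weak-split-pullbacks-theorem}. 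This completes the argument; and the first hypothesis is in fact a special case of the second, since by Lemma~\ref{limits-preserved} any pullback existing in $\sA_{<\kappa}$ is also a pullback in $\sA$.

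I expect the main obstacle to be exactly this passage to the general hypothesis. The pullback objects $P_\xi$ need not be $\kappa$-presentable --- pullbacks of $\kappa$-presentable objects generally fail to be $\kappa$-presentable (for instance, kernels of maps between finitely presented modules over a non-coherent ring) --- so one cannot form a QE-epi class inside $\sA_{<\kappa}$ and invoke Proposition~\ref{strongly-pure-epimorphisms-pullback-stable-prop} verbatim. The resolution is to make no claim of $\kappa$-presentability of the $P_\xi$, exploiting instead that a $\kappa$-directed colimit of split epimorphisms between \emph{arbitrary} objects is already $\kappa$-pure (Lemma~\ref{pure-epimorphisms-simple-properties}(a,e)), combined with the commutation of $\kappa$-directed colimits with the finite limit defining the pullback (Lemma~\ref{directed-colimits-commute-with-small-limits}).
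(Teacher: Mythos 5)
Your proposal is correct and follows essentially the same route as the paper: the main case is Theorem~\ref{under-very-weak-split-pullbacks-theorem} plus Example~\ref{very-weak-split-pullback-examples}(2) plus Proposition~\ref{strongly-pure-epimorphisms-pullback-stable-prop}, and for the parenthesized hypothesis the paper simply instructs the reader to ``follow the proof of condition~(i$^*$) for the class $\cP$ in Theorem~\ref{direct-limit-closure-of-QE-epi-theorem}'', which is exactly the argument you spell out (decompose the cospan via Proposition~\ref{comma-category-objects-directed-colimits}, take pullbacks in $\sA$, and commute them with the $\kappa$\+directed colimit via Lemma~\ref{directed-colimits-commute-with-small-limits}). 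Your added observation that under the weaker hypothesis the pullbacks $P_\xi$ need not be $\kappa$\+presentable, so the conclusion must go through Lemma~\ref{pure-epimorphisms-simple-properties}(a,e) rather than the QE\+epi machinery in $\sA_{<\kappa}$, is a correct and worthwhile expansion of the detail the paper leaves implicit.
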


\begin{proof}
 The main assertion is obtained by combining
Theorem~\ref{under-very-weak-split-pullbacks-theorem},
Example~\ref{very-weak-split-pullback-examples}(2), and
Proposition~\ref{strongly-pure-epimorphisms-pullback-stable-prop}
(cf.\ the proof of Corollary~\ref{pure-epimorphisms-regular-cor}).
 For the more general assertion under the assumption in parentheses,
one needs to follow the proof of condition~(i$^*$) for the class $\cP$
in Theorem~\ref{direct-limit-closure-of-QE-epi-theorem}.
\end{proof}

\begin{ex}
 According to
Example~\ref{in-additive-pure-epimorphisms-are-regular-example},
all pullbacks of split epimorphisms exist in any $\kappa$\+accessible
additive category $\sA$, as well as in its full
subcategory~$\sA_{<\kappa}$.
 So it follows from
Corollary~\ref{pure-epimorphisms-pullback-stable-cor} that
$\kappa$\+pure epimorphisms are stable under pullbacks in
any $\kappa$\+accessible additive category~$\sA$.
 This result also follows from the discussion of the $\kappa$\+pure
exact structure on $\sA$ in~\cite[Section~4]{Plce}; see
Example~\ref{in-additive-pure-epimorphisms-are-regular-example}
for specific references.
\end{ex}

\begin{ex} \label{no-pullbacks-of-split-epis-in-acc-preadd-example}
 Any small category with split idempotents is
accessible~\cite[Theorem~2.2.2]{MP}.
 Therefore, Remark~\ref{pullbacks-of-split-epimorphisms-remark}
provides an example of an accessible preadditive category that does
not have pullbacks (or even kernel pairs) of split epimorphisms.
 So pullbacks of pure epimorphisms need not exist in an accessible
preadditive category in general.
\end{ex}

\bigskip

\end{document}